\documentclass[12pt]{amsart}
\usepackage{amsmath} 
\usepackage{amsthm} 
\usepackage{amssymb} 
\usepackage{graphicx} 
\usepackage[dvips,letterpaper,margin=1in,bottom=0.7in]{geometry}
\usepackage{tensor}
\usepackage{upgreek}
\usepackage{tikz}
\usetikzlibrary{automata,positioning}
\usetikzlibrary{decorations.pathmorphing,arrows.meta}
\usetikzlibrary{intersections,calc}
\usepackage{tikz-cd}
\usepackage{bbm}
\usepackage{float}
\usepackage[backref=page]{hyperref}
\usepackage{cleveref}
\usepackage{quiver}
\usepackage{aliascnt}

\allowdisplaybreaks

\newcommand{\defi}[1]{\textsf{#1}} 

\DeclareMathOperator{\ord}{ord}

\DeclareMathOperator{\im}{Im}

\DeclareMathOperator{\Hom}{Hom}

\DeclareMathOperator{\Pic}{Pic}

\DeclareMathOperator{\rk}{rk}

\newcommand{\Mexact}{\mathcal{M}_{g,2}(=k)}

\newcommand{\Oc}{\mathcal{O}_C}

\newcommand{\abs}[1]{\left| #1 \right|}
\providecommand{\oo}{\overline}
\usepackage{enumitem}
 
\let\baraccent=\= 
\renewcommand{\=}[1]{\stackrel{#1}{=}} 

\providecommand{\xr}{\xrightarrow}

\providecommand{\ZZ}{\mathbb{Z}}

\providecommand{\PP}{\mathbb{P}}

\providecommand{\mL}{{\mathcal L}}
\providecommand{\mP}{{\mathcal P}}
\providecommand{\mM}{{\mathcal M}}
\providecommand{\mO}{{\mathcal O}}

\providecommand{\na}{\boldsymbol{a}}
\providecommand{\nb}{\boldsymbol{b}}
\providecommand{\mf}{\mathfrak{f}}
\providecommand{\B}{\big}
\providecommand{\BB}{\Big}

\DeclareMathOperator{\argmax}{argmax}

\DeclareMathOperator{\inv}{inv}
\DeclareMathOperator{\ASP}{ASP}
\DeclareMathOperator{\sgn}{sgn}
\DeclareMathOperator{\Ess}{Ess}
\DeclareMathOperator{\coker}{coker}

\providecommand{\wt}{\widetilde}

\setlist[enumerate]{noitemsep, topsep=0pt}

\newtheorem{thm}{Theorem}[section]
\crefname{thm}{Theorem}{Theorems}

\newaliascnt{lem}{thm}
\newtheorem{lem}[lem]{Lemma}
\aliascntresetthe{lem}
\crefname{lem}{Lemma}{Lemmas}

\newaliascnt{cor}{thm}
\newtheorem{cor}[cor]{Corollary}
\aliascntresetthe{cor}
\crefname{cor}{Corollary}{Corollaries}

\newaliascnt{prop}{thm}
\newtheorem{prop}[prop]{Proposition}
\aliascntresetthe{prop}
\crefname{prop}{Proposition}{Propositions}

\theoremstyle{definition}
\newaliascnt{dfn}{thm}
\newtheorem{dfn}[dfn]{Definition}
\aliascntresetthe{dfn}
\crefname{dfn}{Definition}{Definitions}

\newaliascnt{rmk}{thm}
\newtheorem{rmk}[rmk]{Remark}
\aliascntresetthe{rmk}
\crefname{rmk}{Remark}{Remarks}

\newaliascnt{example}{thm}
\newtheorem{example}[example]{Example}
\aliascntresetthe{example}
\crefname{example}{Example}{Examples}

\crefname{figure}{Figure}{Figures}
\crefname{section}{Section}{Sections}

\crefname{app}{Appendix}{Appendices}

\usepackage{mathtools}

\DeclarePairedDelimiter\floor{\lfloor}{\rfloor}
\usepackage{cancel}

\newcommand\numberthis{\addtocounter{equation}{1}\tag{\theequation}}

\title{Brill-Noether theory for totally ramified covers of the projective line}
\author[D. Aggarwal]{Daksh Aggarwal}
\email{daksh\_aggarwal@brown.edu}
\address{Department of Mathematics, Brown University}
\begin{document}
\begin{abstract}
 Given a curve $C$ that is a degree $k$ cover $C \to \PP^1$ totally ramified at two points $p$ and $q$, we can seek to understand the space of degree $d$ line bundles on $C$ with prescribed ramification at $p$ and $q$. The corresponding subschemes of $\Pic^d(C)$ are called \textit{transmission loci} and are parameterized via elements of the (extended) $k$-affine symmetric group $\wt{\Sigma}_k$. Transmission loci provide a refinement of the splitting loci that have recently been extensively studied for $k$-gonal curves. Pflueger has conjectured analogues of the classic Brill-Noether theorem should hold for transmission loci. In this paper we prove Pflueger's conjectures.
\end{abstract}

\maketitle

\section{Introduction}
A fundamental problem in algebraic geometry is to understand the space of maps from an algebraic curve $C$ to projective space and the study of this problem has led to the rich area of Brill-Noether theory. Given the correspondence between maps $C \to \PP^r$ of degree $d$ and degree $d$ line bundles $\mL \in \Pic^d(C)$ along with an $(r+1)$-dimensional basepoint-free subspace $V \subseteq H^0(C,\mL)$ of global sections, the main object of study in classical Brill-Noether theory is
\begin{equation*}
    W^r_d(C) = \{\mL \in \Pic^d(C): h^0(C, \mL) \geq r+1\}.
\end{equation*}
When $C$ is a \textit{general} curve of genus $g$, several papers from the 1980s give us insight into the core geometry of $W^r_d(C)$. Griffiths$-$Harris \cite{griffiths_variety_1980} proved that $W^r_d(C)$ has the expected dimension $\rho(g,r,d)\coloneqq g-(r+1)(g+r-d) $; Gieseker \cite{gieseker_stable_1982} proved that $W^r_d(C)$ is smooth away from $W^{r+1}_d(C)$; Fulton$-$Lazarsfeld \cite{fulton_connectedness_1981} proved that $W^r_d(C)$ is irreducible if $\rho(g,r,d) > 0$. These results together form the celebrated Brill-Noether theorem.

Curves arising in natural contexts are rarely general, often admitting explicit descriptions due to special maps to projective space. For example, when $C$ is a $k$-gonal curve, the covering map $f: C \to \PP^1$ is special when $k < \lfloor (g+3)/2 \rfloor$, as the general Brill–Noether theorem predicts $W^1_k(C)=\varnothing$. This motivates studying $W^r_d(C)$ for a \textit{general degree $k$ cover of $\PP^1$}. The analysis is challenging because $W^r_d(C)$ is typically reducible and its components need not be equidimensional \cite{coppens_varieties_2002, pflueger_brillnoether_2017}. These components can be classified by the isomorphism class of \(f_*\mL\), equivalently by its \textit{splitting type} \(\vec{e}\in \ZZ^k\) \cite{cook-powell_components_2022, larson_refined_2021}. The splitting type refines the data $(r,d)$ and defines the \textit{splitting locus}
\begin{equation*}
    W^{\vec{e}}(C) \coloneqq \{\mL\in \Pic^d(C): f_{*}\mL \cong \mO_{\PP^1}(\vec{e})\text{ or a specialization thereof}\}.
\end{equation*}
Recently, E.\ Larson$-$H.\ Larson$-$Vogt \cite{larson_global_2025} prove complete analogues of the general Brill-Noether theorem for $W^{\vec{e}}(C)$ for any general $k$-gonal curve.

In this paper, we study the Brill-Noether theory of covers of $\PP^1$ totally ramified at two points. The results about $W^{\vec{e}}(C)$ proved in \cite{larson_global_2025} extend to this setting, since their degeneration argument already uses such covers \cite[Remark 2]{larson_global_2025}. However, when the covering map $f:C\to \PP^1$ has two points $p$ and $q$ of total ramification, i.e., $\mO_C(kp) \cong f^{*}\mO_{\PP^1}(1)\cong \mO_{C}(kq)$, we can seek to understand the more refined locus of degree $d$ line bundles on $C$ with prescribed ramification at the special points $p$ and $q$, i.e., line bundles with specified values of $h^0(C,\mL(ap+bq))$ for all $a,b\in \ZZ$. By work of Pflueger \cite{pflueger_extended_2022}, we can prescribe ramification at $p$ and $q$ via a choice of a permutation $\tau$ in the \textit{(extended) $k$-affine symmetric group} $\wt{\Sigma}_k$, which means that $\tau$ is a bijection $\ZZ\to \ZZ$ such that $\tau(n+k) = \tau(n)+k$ for all $n$. Such a permutation $\tau$ defines a $\tau$-\textit{transmission locus}
\begin{equation*}
    W^{\tau}(C,p,q) = \{\mL \in \Pic^d(C): h^0(C,\mL(ap+bq)) \geq \#\{n \geq -b: \tau(n) \leq a\} \text{ for all } a,b\in \ZZ\}.
\end{equation*}
Every splitting locus $W^{\vec{e}}(C)$ is a transmission locus $W^{\tau(\vec{e})}$ for some permutation $\tau(\vec{e})$ (since a splitting type $\vec{e}$ can be prescribed using $k$ inequalities for $a\in\{-e_1k,\dots,-e_kk\}$ and $b=0$ above), but the converse is false. Indeed, up to a pre- and post-translation, $\tau(\vec{e})$ is the same as the permutation $w(\vec{e})$ constructed in \cite[Thm. 1.4]{larson_global_2025} (see \cref{rmk:llv} for more details). Studying $W^{\tau}(C,p,q)$ thus leverages the full combinatorics of $\wt{\Sigma}_k$.

Pflueger \cite{pflueger2026transmissionpermutationsdemazureproducts} has conjectured that analogues of Brill-Noether theorem should hold for the locus $W^{\tau}(C,p,q)$. The expected codimension of $W^{\tau}(C,p,q)$ in $\Pic^d(C)$ is $\inv_k(\tau)$, the number of $k$-inversions in $\tau$ modulo $k$ (\cref{dfn:k_inversions}). Pflueger has shown that the expected dimension serves as an upper bound for $\dim W^{\tau}(C,p,q)$ \cite[Thm.\ 1.7]{pflueger2026transmissionpermutationsdemazureproducts}.  We establish that the expected dimension also serves as a lower bound by proving a new regeneration theorem along with smoothness results, thus proving part of the conjectured analogues:

\begin{thm}\label{thm:main}
  Fix $\tau \in \wt{\Sigma}_{k}$ and let $C\to \PP^1$ be a general degree $k$, genus $g$ cover totally ramified at two points $p$ and $q$ such that $k$ is the smallest positive integer satisfying $\mO_C(k(p-q)) \cong \mO_C$.
    \begin{enumerate}
        \item The variety $W^{\tau}(C,p,q)$ is non-empty if and only if $g \geq \inv_k(\tau)$. If $W^{\tau}(C,p,q)$ is non-empty, then $\dim W^{\tau}(C,p,q) = g - \inv_k(\tau)$. 
        \item If $g=\inv_k(\tau)$, then the number of points in $W^{\tau}(C,p,q)$ is the number of reduced words for $\tau + d-g$ in $\Sigma_k$.
        \item $W^{\tau}(C,p,q)$ is normal and Cohen–Macaulay, and is smooth away from the union of transmission loci $W^{\tau'}(C,p,q) \subset W^{\tau}(C,p,q)$ having codimension 2 or more.
        \item When $g > \inv_{k}(\tau)$, $W^{\tau}(C,p,q)$ is irreducible.
        \item When the characteristic of the ground field does not divide $k$ and $g \geq \inv_{k}(\tau)$, the universal $\mathcal{W}^{\tau}$ has a unique component dominating a component of $\mathcal{H}_{k,g,2}$ that consists of twice-marked curves $(C,p,q)$ such that $k$ is the smallest positive integer satisfying $\mO_C(k(p-q)) \cong \mO_C$.
    \end{enumerate}
\end{thm}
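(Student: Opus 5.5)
The plan is to run the strategy of Larson--Larson--Vogt \cite{larson_global_2025} with the splitting stratification of $\Pic^d$ replaced by the finer transmission stratification indexed by $\wt{\Sigma}_k$. The starting point is Pflueger's upper bound \cite[Thm.\ 1.7]{pflueger2026transmissionpermutationsdemazureproducts}: every component of $W^{\tau}(C,p,q)$ has dimension at most $g-\inv_k(\tau)$. The new ingredient is a local model. Over the family of twice-marked $k$-gonal curves, I would realize $W^{\tau}$ locally as the pullback of an affine Schubert-type variety $X_\tau$ under a map from the relative Picard scheme to an affine flag variety of $GL_k$, via a transmission map. Concretely, I would trivialize $f_*\mL$ near $f(p)$ and $f(q)$ and record the two filtrations by vanishing orders at $p$ and at $q$; their relative position is an element of $\wt{\Sigma}_k$. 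If this map is smooth (versal), then $W^{\tau}$ inherits the local properties of $X_\tau$. In the affine flag variety the orbit closures are normal and Cohen--Macaulay, have codimension $\inv_k(\tau)$, and are smooth away from the boundary strata. This would give (3), and also give every component dimension at least $g-\inv_k(\tau)$, once non-emptiness is known.

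For non-emptiness and the lower bound I would degenerate $C$ to a chain of $g$ elliptic curves $E_1\cup\dots\cup E_g$. On each $E_i$, the points $p_i,q_i$ are glued consecutively and satisfy $k(p_i-q_i)\sim 0$ exactly, with $k$ the exact torsion order; this is Pflueger's chain model. On one such elliptic component, a limit line bundle changes the transmission permutation by at most one simple reflection $s_j$ of $\wt{\Sigma}_k$, and only for a unique choice of bundle. So limit linear series of type $\tau$ on the chain correspond to reduced-word-like sequences, and the chain has a limit of type $\tau$ exactly when $g\ge\inv_k(\tau)$. Fiber by fiber, the limit locus has dimension $g-\inv_k(\tau)$ (the free choices lie on components where no reflection is used). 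Smoothing then requires a regeneration theorem: every limit in this locus lifts to nearby smooth curves. I would prove it using the local model above, which shows that $\mathcal{W}^{\tau}$ over the versal family has codimension at most $\inv_k(\tau)$ everywhere. Combined with the upper bound, this gives (1). When $g=\inv_k(\tau)$, the count of limits is the number of reduced words for $\tau$ after the translation normalizing the degree, i.e.\ $\tau+d-g\in\Sigma_k$. Each limit is reduced, by the transversality coming from the elliptic computation, so the count is preserved under smoothing, giving (2).

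For (4), I would argue as Fulton--Lazarsfeld do via connectedness. Since $W^\tau$ is Cohen--Macaulay and normal, it suffices to show it is connected when $g>\inv_k(\tau)$. For that I would show the degeneration of $W^\tau$ to the chain is connected: the limit locus is a union of products of elliptic curves indexed by words, and adjacent words (related by braid moves or by moving an unused component) share loci. Connectedness passes to the general fiber by Zariski's connectedness (Stein factorization) on a proper family; note that normality of the total space follows from (3) in families. For (5), I would study the monodromy of the finite map $\mathcal{W}^\tau\to\mathcal{H}_{k,g,2}$ in the case $g=\inv_k(\tau)$, and reduce the case $g>\inv_k(\tau)$ to it by adding marked points or by the irreducibility just proved. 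Monodromy on the chain family moves a translation point around $E_i$, and this realizes the swaps between reduced words differing by a commutation or braid relation. This is where the hypothesis $\mathrm{char}\nmid k$ is needed, so that the $k$-torsion points form an étale cover and the monodromy is transitive. Since the reduced words of a permutation are connected under these moves, the permutation action is transitive, and hence there is a unique dominating component.

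The main obstacle I expect is the regeneration/versality step. One has to prove that the transmission map from the family of line bundles on twice-marked totally ramified covers to the affine flag variety is smooth at the relevant points, especially at the nodal chain. Without this, only the upper bound holds. My first attempt would be a deformation-theoretic computation: identify the tangent space to the transmission map with $H^1$ of an appropriate endomorphism sheaf built from $f_*\mathcal{E}nd$, and show the relevant obstruction vanishes on each elliptic component, using that $k$ is the exact order of $p-q$.
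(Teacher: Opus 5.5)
There is a genuine gap, and it is the step you flag yourself. Beyond Pflueger's upper bound, almost everything depends on it: non-emptiness and the lower bound in (1), the count in (2), and all of (3) rest on versality of a transmission map to a stack of rank-$k$ bundles on $\PP^1$ with full flags at $0$ and $\infty$. You never prove this, and at the nodal chain it is not even well posed. A $\tau$-positive limit line bundle on $X$ is defined by the conditions $h^0(X,L(ap^g-bp^0)_{\vec d})\ge s_\tau(a+1,b)$ for \emph{all} degree distributions $\vec d$. It is not the preimage of a single relative-position stratum under one map from $\Pic^d(\mathcal X/B)$ to a smooth stack over $B$. The naive count from intersecting these determinantal loci also overshoots $\inv_k(\tau)$. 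So the claim that the relative locus has codimension at most $\inv_k(\tau)$ near the special fiber is exactly what has to be proved.

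On the smooth fiber, by contrast, you do not need versality for the lower bound. Granting that the target stack is smooth and the closed $\tau$-stratum has codimension $\inv_k(\tau)$, every component of its preimage in the smooth $\Pic^d(C)$ already has codimension at most $\inv_k(\tau)$. What is missing is non-emptiness, i.e.\ an actual regeneration from the chain. The paper supplies this with a concrete limit-linear-series construction:
\begin{itemize}
\item sections in the $k$ essential twists $(\tau(j),e_\tau(j))$ that are non-colliding at $q$, which by \cref{prop:non-colliding} force $\tau^{p,q}_{\mL}\ge\tau$;
\item refined ramification $\na^i_j=T^{\le i}(j)+e_\tau(j)+i$, read off from the truncations of a reduced word;
\item existence and uniqueness of such a series over a general point of each $W^{T}(X,p^g,p^0)$;
\item a dimension count in \cref{thm:regeneration} whose total defect collapses to $\sum_j h_\tau(\tau(j)+1,-e_\tau(j))=\inv_k(\tau)$ by \cref{lem:sum_h1_equals_inv}.
\end{itemize}
Some regeneration argument of this kind is indispensable; a local model on the smooth fiber cannot substitute for it.

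For (3), even on the smooth fiber you would need versality at every point of $W^\tau(C,p,q)$. That is a Gieseker--Petri-type statement for all transmission types, and you do not establish it. At a point of exact type $\tau$ it amounts to injectivity of the Petri map $\bigoplus_j S_j\otimes H^0(\omega_C\otimes\mL^\vee(-D_j))\to H^0(\omega_C)$, whose domain has dimension $\inv_k(\tau)$. The paper proves this in \cref{prop:injective_on_X} by specializing to the chain with degree distributions $\vec d_j$. These are chosen so that $H^0(\omega_X\otimes L(j)^\vee_{\vec d_j})$ is supported exactly on the components where $j$ is the decreasing index.

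The paper then avoids any statement at deeper strata:
\begin{itemize}
\item Cohen--Macaulayness comes from the special fiber;
\item smoothness along codimension-one strata comes from exhibiting $W^{\tau'}(C,p,q)^\circ$ as a Cartier divisor, via the determinant of a map of equal-rank bundles (\cref{thm:smooth});
\item normality comes from Serre's criterion.
\end{itemize}
Your ``smooth away from the boundary strata'' only gives smoothness on the open stratum. You would still have to deduce smoothness along codimension-one strata.

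Finally, for (5), moving a point around a single $E^i$ does not interchange $L_T$ and $L_{F^iT}$, which differ on two adjacent components. One has to smooth the node $p^i$ (two nodes for a shuffle) and exhibit an irreducible family of $\tau$-positive limits containing both, as in \cite[\S 10]{larson_global_2025}.
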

\begin{rmk}\label{rmk:char_assumption}
    By a \textit{general} degree $k$, genus $g$ cover totally ramified at two points $p$ and $q$ we mean a general point of the component of the Hurwitz space $\mathcal{H}_{k,g,2}$ on which the degeneration family $\mathcal{X}$ we use exists (see \cref{sec:degeneration}). \textit{Assume now the characteristic of the ground field is 0 or greater than $k$}. Then, in \Cref{app:irreducibility} we show that if $g> 0 $, then $\mathcal{H}_{k,g,2}$ has exactly $\sigma_0(k)-1$ irreducible components. In particular, the locus of covers in $\mathcal{H}_{k,g,2}$ that do not factor through any nontrivial degree-$d$ cover of $\PP^1$ with $d \mid k$ is irreducible, and our theorem applies to a general point of this component. This is not a genuine limitation: if $(C,p,q)$ is a general point of another component of $\mathcal{H}_{k,g,2}$, corresponding to some proper divisor $d \mid k$, then \cref{thm:main} can be applied by letting $k = d$. 
\end{rmk}

A permutation $\tau \in \wt{\Sigma}_k$ is determined by the window $(\tau(0),\tau(1),\dots,\tau(k-1))$ and is called \defi{sorted} when the window is increasing. 
    Our \cref{thm:main} naturally also applies to general degree $k$ covers totally ramified at a single point $p$. This problem setting corresponds to studying the one-point transmission locus
   \begin{equation*}
    W^{\tau}(C,p) = \{\mL \in \Pic^d(C): h^0(C,\mL(ap)) \geq \#\{n \geq 0: \tau(n) \leq a\} \text{ for all } a\in \ZZ\}.
\end{equation*}
So, ramification at a single point can be prescribed using sorted permutations in $\wt{\Sigma}_k$ and they are in bijective correspondence with the set of $k$-cores. 

The following example illustrates \cref{thm:main}.

\begin{example} Suppose $C$ is a general smooth trigonal curve of genus $8$ totally ramified at two points $p$ and $q$. When $\tau = (0,8,10) \in \wt{\Sigma}_3$, $W^{\tau}(C,p,q) \subseteq \Pic^3(C)$ has codimension $5$ since $\tau$ has the following $3$-inversions: $\{(1,3),(1,6),(2,3),(2,6),(2,9)\}.$ Moreover, any line bundle $\mL\in W^{\tau}(C,p,q)$ satisfies 
\[h^0(C,\mL) \geq 1 \text{ and } h^0(C,\mL(9p)) \geq 5\]
and thus is also in the splitting locus $W^{\vec{e}}(C)$ for $\vec{e} = (-4,-3,0)$. Since the splitting locus also has codimension $h^1(\PP^1, \text{End}(\mO_{\PP^1}(\vec{e}))) = 5$, we see $W^\tau(C,p,q)$ is equal to the splitting locus.  The permutation $\tau = (0,8,10)$ therefore prescribes only generic ramification at $p$ and $q$ for line bundles of splitting type $(-4,-3,0)$. By shuffling the entries of $\tau$ we keep the one-point transmission loci at $p$ unchanged and hence the splitting type too, and these 5 specializations of $(0,8,10)$ and the corresponding locus dimensions are shown in \cref{fig:poset}; arrows indicate inclusion of the corresponding loci. Since there are four reduced words for $(10,8,0) - 5 = (5,3,-5)$ in $\Sigma_3$, we also see that $\# W^{(10,8,0)}(C,p,q) = 4$. There are 5 other sorted permutations in $\wt{\Sigma}_3$ [namely $(-2,8,12), (-2,9,11), (-1,7,12), (-1,9,10), \text{ and }  (0,7,11)$] that prescribe the same one-point loci at $q$ as $(0,8,10)$. Considering all shuffles of these 5 permutations along with the 6 already shown in \cref{fig:poset} results in a total of 36 permutations all of whose splitting types are $(-4,-3,0)$ but have various ramification at $p$ and $q$.

\begin{figure}[h!]
 \begin{tikzcd}
	& \tau && {\dim W^\tau(C,p,q)} \\
	& {(0,8,10)} && 3 \\
	{(8,0,10)} && {(0,10,8)} & 2 \\
	{(8,10,0)} && {(10,0,8)} & 1 \\
	& {(10,8,0)} && 0
	\arrow[from=3-1, to=2-2]
	\arrow[from=3-3, to=2-2]
	\arrow[from=4-1, to=3-1]
	\arrow[from=4-3, to=3-3]
	\arrow[from=5-2, to=4-1]
	\arrow[from=5-2, to=4-3]
\end{tikzcd}
\caption{Starting from a sorted $\tau$ and shuffling entries increases codimension.}
\label{fig:poset}
\end{figure}
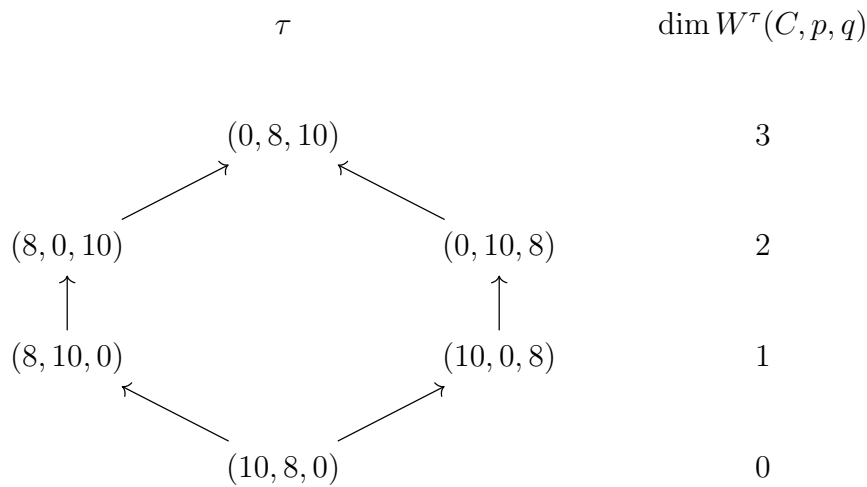

\end{example}

\subsection{Proof technique and outline of the paper}
We approach the problem using degeneration, wherein a general degree $k$ cover of $\PP^1$ degenerates to a singular curve of compact type, namely a chain of elliptic curves, on the special fiber. Pflueger's work \cite{pflueger2026transmissionpermutationsdemazureproducts} supplies a concrete combinatorial description of transmission loci on the special fiber of our family. Combined with upper-semicontinuity, this combinatorial description enables an upper bound on the dimension of transmission loci for the generic fiber of our family. The key technical result of this paper is a regeneration theorem (\cref{thm:regeneration}), which shows that limit line bundles in transmission loci on the central fiber smooth out to the generic fiber in at least the expected dimension. With regeneration in hand, the proof of enumeration is immediate and Cohen-Macaulayness (\cref{thm:cm}) and connectedness (\cref{thm:connected}) are shown by establishing them on the central fiber, since those properties are open in flat families. To prove smoothness of $W^{\tau}(C,p,q)$ on the open locus consisting of line bundles of type precisely $\tau$ (\cref{cor:smooth_open}), we define a pointed Petri map whose injectivity is equivalent to smoothness at a line bundle of type $\tau$, and show the map is injective on the central fiber of our family. We finally prove smoothness of $W^{\tau}(C,p,q)$ along transmission loci of codimension $1$ (\cref{thm:smooth}), by showing such loci are Cartier divisors of $W^{\tau}(C,p,q)$, which also yields normality and irreduciblity.

In \cref{sec:combinatorics}, we discuss the connection between prescribing ramification at $p$ and $q$ and permutations in $\wt{\Sigma}_k$ and reduce the problem of checking whether a line bundle $\mL$ lies in $W^{\tau}(C,p,q)$ to verifying for the existence of $k$ ``new" sections in $k$ distinct twists $\mL(a_i p+b_i q)$ for certain $a_i,b_i \in \ZZ$ (\cref{prop:non-colliding}). In \cref{sec:degeneration}, we discuss our degeneration along with the definition and combinatorial stratification of transmission loci on the special fiber. We then carry out set-theoretic regeneration in \cref{sec:regeneration} by carefully choosing non-colliding ramification indices for our so-called \textit{$\tau$-positive limit linear series} and showing they spread out in the expected dimension to the generic fiber of our family. Because the transmission loci on the special fiber are reduced, we obtain scheme-theoretic regeneration. In \cref{sec:smoothness}, we prove the smoothness of transmission loci away from codimension 2 transmission loci. Finally, in \cref{sec:monodromy}, we outline the proof of \cref{thm:main}(5), as we are able to re-use the arguments of \cite{larson_global_2025}.

\subsection{Conventions}
We work over an algebraically closed field. For $n\in \ZZ$, we will use $[n]$ to denote the set $\{0,1,\dots,\abs{n}-2,\abs{n}-1\}$. Throughout, \textit{curve} will mean a connected proper nodal curve and $C$ will refer to a smooth curve. We will use $g$ to refer to the genus of a curve and $d$ to refer to the degree of a line bundle.

\textbf{Acknowledgments.}  I am grateful to my advisor Isabel Vogt for her guidance and numerous insightful discussions, as well as for suggesting this problem; \Cref{app:irreducibility} is based on Vogt's notes. I also thank Hannah Larson, Feiyang Lin, and Emma Pickard for several helpful discussions.

\section{Combinatorics}\label{sec:combinatorics}
\subsection{Transmission loci}
Let $(C,p,q)$ be a twice-marked smooth curve of genus $g$.  In order to study the locus of line bundles in $\Pic^d(C)$ with prescribed ramification at $p$ and $q$, let us define the \defi{transmission function}
\begin{equation}\label{eq:transmission_fn}
    s_{\mL}^{p,q}(a,b) = h^0(C, \mL((a-1)p - bq)).
\end{equation}

\begin{rmk}
The shift by $1$ in \eqref{eq:transmission_fn} will result in cleaner formulae when we are dealing with the decomposition of permutations via the Demazure product on our degeneration.
\end{rmk}

Given an arbitary function $f:\ZZ^2 \to \ZZ_{\geq 0}$, it is natural to ask when $f$ might arise as the transmission function of some line bundle. In other words, what are the  characteristic properties of a transmission function? \cref{lem:h0_is_submodular} answers this by giving necessary conditions $f$ should possess, and as we will later see these are also sufficient when $C$ has sufficiently large genus.

\begin{lem}\label{lem:h0_is_submodular}
    Let $(C,p,q)$ be a twice-marked smooth curve of genus $g$ and $\mL$ a line bundle of degree $d$ on $C$. Let $M = \max\{d,2g\}$ and $\chi = d-g$. The following hold:
    \begin{enumerate}
        \item For all $a,b \in \ZZ$, if $a-b \leq -M$ then $s^{p,q}_{\mL}(a,b) = 0$, and if $a-b \geq M$ then $s^{p,q}_{\mL}(a,b) = \chi+a-b$.
        \item For all $a,b \in \ZZ$,
        \begin{equation*}
           s_{\mL}^{p,q}(a+1,b) - s_{\mL}^{p,q}(a,b) -s_{\mL}^{p,q}(a+1,b+1) + s_{\mL}^{p,q}(a,b+1) \geq 0.
        \end{equation*}
    \end{enumerate}
\end{lem}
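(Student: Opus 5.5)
For \cref{lem:h0_is_submodular}, part (1) will follow from Riemann--Roch and degree considerations, and part (2) from a dimension count with subspaces of a single space of global sections.

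\textbf{Part (1).} The line bundle $\mL((a-1)p-bq)$ has degree $d+a-b-1$. If $a-b\leq -M\leq -d$, this degree is at most $-1$, so $h^0=0$. For the second claim, Riemann--Roch gives
\[
\chi\bigl(\mL((a-1)p-bq)\bigr)=d+a-b-1+1-g=\chi+a-b,
\]
so it suffices to show $h^1=0$. That holds once the degree exceeds $2g-2$. When $a-b\geq M$ and $d\geq 0$, the degree is at least $d+2g-1\geq 2g-1$, which suffices.

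The one point I expect to need care is negative $d$. Then $M=2g$ only guarantees degree $\geq d+2g-1$, which can be at most $2g-2$. I would handle this either by noting that the paper works with $d\geq 0$ (or uses the periodicity $\mO_C(kp)\cong\mO_C(kq)$ to normalize $d$), or by replacing $M$ with $\max\{|d|,2g-d\}$. Either way the mechanism is the same: vanishing of $h^0$ in negative degree, and vanishing of $h^1$ in degree $>2g-2$.

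\textbf{Part (2).} Set $\mathcal N=\mL((a-1)p-(b+1)q)$. Unwinding \eqref{eq:transmission_fn}, the four terms are $h^0(\mathcal N(p+q))$, $h^0(\mathcal N(q))$, $h^0(\mathcal N(p))$ and $h^0(\mathcal N)$, with signs $+,-,-,+$. Inside $V=H^0(\mathcal N(p+q))$, take the subspaces
\[
A=H^0(\mathcal N(q)) \quad\text{and}\quad B=H^0(\mathcal N(p)).
\]
These consist of the sections vanishing at $p$ and at $q$ respectively, viewed via the natural inclusions.

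Since $p\neq q$, a section of $\mathcal N(p+q)$ lying in both $A$ and $B$ vanishes at both points. Hence $A\cap B=H^0(\mathcal N)$, because $\mathcal N\subset\mathcal N(p)\cap\mathcal N(q)$ inside $\mathcal N(p+q)$ is exactly the subsheaf of sections vanishing along $p+q$. Then
\[
\dim A+\dim B-\dim(A\cap B)=\dim(A+B)\leq\dim V,
\]
which is precisely the claimed inequality. This part is routine; the only step to state carefully is the identification of $A\cap B$.
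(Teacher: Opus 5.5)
Your proof is the same as the paper's: part (1) follows from vanishing of $h^0$ in negative degree plus Riemann--Roch, and part (2) from the fact that inside $H^0(\mL(ap-bq))$ the subspaces of sections vanishing at $p$ and at $q$ intersect in the image of $H^0(\mL((a-1)p-(b+1)q))$ (your $\mathcal N$ is the paper's $\mM(-p-q)$). Your caveat about $d<0$ is well founded and points to a genuine defect in the statement that the paper's one-line proof glosses over: with $M=\max\{d,2g\}$ the second claim of (1) is false (e.g.\ $g=0$, $d=-3$, $a=b=0$ gives $s^{p,q}_{\mL}(0,0)=h^0(\mO_{\PP^1}(-4))=0\neq-3=\chi+a-b$), and the correct repair is your third option of enlarging $M$ ($M=\max\{d,2g-d\}$ already suffices), whereas your other two workarounds do not apply, since the lemma assumes neither $d\geq 0$ nor $kp\sim kq$, and the latter would not change the degree anyway.
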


\begin{proof}
 Note that (1) is simply a restatement of the fact that negative-degree line bundles have no global sections and the Riemann-Roch theorem. 
For (2), let $\mM = \mL(ap-bq)$. Then the image of the global sections of $\mM(-p)$ and $\mM(-q)$ in $H^0(\mM)$ are the subspace of sections vanishing at $p$ and $q$ respectively, and they intersect in the subspace of sections vanishing at both $p$ and $q$, which is the image of the global sections of $\mM(-p-q)$ in $H^0(\mM)$. Thus, 
\begin{equation*}
    h^0(\mM) - h^0(\mM(-p)) - h^0(\mM(-q)) + h^0(\mM(-p-q)) \geq 0,
\end{equation*}
which is the assertion of (2).
\end{proof}
A function $f:\ZZ^2 \to \ZZ$ for which there exist $M$ and $\chi$ such that (1) and (2) from \cref{lem:h0_is_submodular} hold is called a \defi{submodular slipface function}. Pflueger \cite{pflueger_extended_2022} shows that, in fact, such functions are quite special, since they all arise from \defi{almost-sign-preserving} permutations, which are permutations of $\ZZ$ that change the sign of only finitely many integers; we denote the group of such permutations as $\ASP$. The correspondence is as follows: given a permutation $\tau\in \ASP$, we define its \defi{slipface function} as 
\begin{equation*}
    s_{\tau}(a,b) = \#\{n \geq b: \tau(n) < a\}.
\end{equation*}
Then for any submodular slipface function $f$, there exists a unique $\tau \in \ASP$ such that $f = s_{\tau}$ \cite[Prop.\ 4.3]{pflueger_extended_2022}. Consequently, for a line bundle $\mL$ on $(C,p,q)$, there exists a unique $\tau = \tau^{p,q}_{\mL} \in \ASP$ such that $s_{\mL}^{p,q} = s_{\tau}$. 

The permutation $\tau_{\mL}^{p,q}$, through its combinatorics, packages geometric information about $\mL$ even beyond its vanishing behavior at $p$ and $q$. For $\alpha \in \ASP$, define its \defi{shift} as 
\begin{equation*}
    \chi_{\alpha}\coloneqq \#\{n \geq 0: \alpha(n) < 0\} - \#\{n < 0: \alpha(n) \geq 0\}.
\end{equation*} 
Then, the shift of $\tau = \tau^{p,q}_{\mL}$ encodes the degree of $\mL$:
\begin{equation}\label{eq:degree}
    \chi_\tau = d-g.
\end{equation}
Further, the inverse of $\tau$ corresponds to the twisted Serre dual of $\mL$:
\begin{equation}\label{eq:serre_dual}
    s_{\tau^{-1}}(b,a) = h^1(C,\mL((a-1)p - bq)) = s^{q,p}_{\omega_C(p+q)\otimes \mL^{\vee}}(-b+1,-a+1).
\end{equation}
If $\iota_{n}\in \ASP$ denotes the translation $\iota_{n}(m) = m-n$, then pre- and post-composition of $\iota_n$ with $\tau$ correspond to twisting by $q$ and $p$ respectively:
\begin{equation}\label{eq:twist}
    \tau\iota_{n} = \tau_{\mL(nq)}^{p,q} \text{ and } \iota_n\tau = \tau_{\mL(np)}^{p,q}.
\end{equation}

 Given the correspondence between the transmission functions of line bundles and slipface functions of $\ASP$ permutations, we can use $\ASP$ permutations to prescribe all possible ramification at $p$ and $q$ for line bundles in $\Pic^{\chi_{\tau}+g}(C)$.
\begin{dfn}
    For each $\tau \in \ASP$, the \defi{$\tau$-transmission locus} of $(C,p,q)$ set-theoretically is
\begin{equation}\label{eq:transmission_locus}
    W^{\tau}(C,p,q) = \{\mL\in \Pic^{\chi_{\tau}+g}(C): h^0(C,\mL(ap-bq)) \geq s_{\tau}(a+1,b)\text{ for all } a,b\in \ZZ\},
\end{equation}
and we view it as a subvariety of $\Pic^{\chi_{\tau}+g}(C)$ with its scheme structure coming from the intersection of the determinantal varieties determined by each of the $h^0$ conditions.
\end{dfn}
The slipface functions define the \defi{Bruhat order} on $\ASP$: we say $\alpha \leq \beta$ if $s_{\alpha}(a,b) \leq s_{\beta}(a,b)$ for all $a,b\in \ZZ$. The Bruhat order determines inclusion of the transmission loci: $W^{\beta}(C,p,q) \subseteq W^{\alpha}(C,p,q)$ if and only if $\chi_{\alpha} = \chi_{\beta}$ and $\alpha \leq \beta$.

A special subgroup of $\ASP$ consists of the permutations with the property that $\tau(n+k) = \tau(n) + k$ for all $n\in \ZZ$; this subgroup is called the \defi{extended $k$-affine symmetric group}, denoted $\wt{\Sigma}_k$. One can check that an ASP permutation $\tau$ is in $\wt{\Sigma}_k$ if and only if
\begin{equation}\label{eq:k_invariance}
    s_{\tau}(a+k,b+k) = s_{\tau}(a,b)
\end{equation}
for all $a,b\in \ZZ$. When $p$ and $q$ on $C$ are such that $kp\sim kq$, then $s_{\tau_{\mL}^{p,q}}$ does indeed satisfy \eqref{eq:k_invariance} and so $\tau_{\mL}^{p,q} \in \wt{\Sigma}_k$ in this case.

\begin{dfn}\label{dfn:k_inversions}
Given $\tau\in \ASP$, an \defi{inversion} of $\tau$ is a pair $(a,b)\in \ZZ^2$ such that $a < b$ but $\tau(a) > \tau(b)$. When $\tau \in \wt{\Sigma}_k$ and $(a,b)$ is an inversion of $\tau$, then so is $(a+kq,b+kq)$ for all $q\in \ZZ$. The \defi{$k$-inversions} of $\tau \in \wt{\Sigma}_k$ are the inversions of $\tau$ under the equivalence relation $(a_1,b_1) \sim_k (a_2,b_2)$ if $a_1 - a_2 = b_1-b_2 \equiv 0\pmod{k}$. The number of inversion classes under this relation $\sim_k$ is denoted \defi{$\inv_k(\tau)$}.
\end{dfn}
  Let $\mathcal{H}_{k,g,2}$ denote the stack of twice-marked smooth genus $g$ curves $(C,p,q)$ with $kp\sim kq$ or equivalently degree $k$ covers of $\PP^1$ totally ramified at $p$ and $q$. When $(C,p,q)$ is a point of $\mathcal{H}_{k,g,2}$, the \defi{expected codimension of $W^{\tau}(C,p,q)$} in $\Pic^{\chi_\tau+g}(C)$ is $\inv_k(\tau)$. 

 \begin{rmk}\label{rmk:llv}
In \cite{larson_global_2025}, where they study $W^{\vec{e}}(C)$ for a general $k$-gonal curve, a correspondence is exhibited between splitting types and certain special $k$-cores called $k$-staircase diagrams. A \textit{$k$-core} is a Young diagram with no hook-length divisible by $k$ and a \textit{$k$-staircase} is a $k$-core with a unique removable box. The set of $k$-cores is in bijection with elements of $\Sigma_k/S_k$, the \textit{$k$-affine symmetric group} modulo the action of the symmetric group. The subgroup $\Sigma_k \leq \wt{\Sigma}_k$ consists of permutations $\alpha$ satisfying the additional normalization constraint
    \begin{equation*}
        \sum_{i=0}^{k-1} \alpha(i) = \frac{k(k-1)}{2}.
    \end{equation*}
    Indeed, these are precisely the permutations $\alpha$ of $\wt\Sigma_{k}$ with $\chi_{\alpha}=0$. More generally, for an arbitrary element $\tau \in \wt{\Sigma}_k$, we have the relation
    \begin{equation}\label{eq:normalization}
        \sum_{i=0}^{k-1}\tau(i) = \frac{k(k-1)}{2} - k\chi_{\tau},
    \end{equation}
    or equivalently $\tau+\chi_{\tau}\in \Sigma_{k}$.

    If $w(\vec{e}) \in \Sigma_k$ is the permutation constructed in \cite[Thm.\ 1.4]{larson_global_2025} and $kp\sim kq$ on $C$, then $W^{\vec{e}}(C) = W^{\tau(\vec{e})}(C,p,q)$ where
    \begin{equation}\label{eq:translation}
        \tau(\vec{e})(n) \coloneqq w(\vec{e})(n+1) - \chi(\mO_{\PP^1}(\vec{e})),
    \end{equation}
    as shown in \cite[Prop.\ 6.12]{larson_global_2025}.
    \end{rmk}

 \begin{example}\label{ex:trigonal_3}
 When $C$ is a general smooth trigonal curve of genus $3$ and totally ramified at $p$ and $q$ then $\tau_{\mO_{C}}^{p,q} = (0,5,7)$. Indeed, we know $\chi_{\tau} =-3$ and from $s_{\tau}(0,0) = h^0(\mO_{C}(-p)) = 0$ we see all entries of $\tau$ must be non-negative and thus $\#\{n < 0: \tau(n) \geq 0\} = 3$. From $s_{\tau}(1,0)  = h^0(\mO_{C}) = 1$, we deduce that one of the entries of $\tau$ must be $0$. Furthermore, from $s_{\tau}(1,1) = h^0(\mO_{C}(-q)) = 0,$ we see that $0$ must be the first entry. From $s_{\tau}(4,0) = h^0(\mO_C(3p)) = 2$, we see that the other two entries of $\tau$ must be $\geq 4$, so they must either be $\{4,8\}$ or $\{5,7\}.$ Since $s_{\tau}(2,-1) = h^0(\mO_{C}(p+q)) = 1$, we conclude that the last entry of $\tau$ must be $> 4$, so that rules out $(0,8,4)$. Finally, from $s_{\tau}(2,-2) = h^0(\mO_{C}(p+2q)) = 1$ we rule out $(0,4,8)$ and from $s_{\tau}(3,-1) = h^0(\mO_{C}(2p+q)) = 1$ we rule out $(0,7,5)$. Thus, $\tau^{p,q}_{\mL} = (0,5,7)$. Observe that $\inv_{k}(\tau) = 3$ and there is a unique reduced word for $\tau+\chi_{\tau}$ in $\Sigma_k$, namely $\sigma^3_{0}\sigma^3_{1}\sigma^3_{2}$ (see \eqref{eq:transposition} for the definition of $\sigma^k_{i}$), and so $\dim W^{\tau}(C,p,q) = 0$ and $\# W^{\tau}(C,p,q) = 1$.
 Using \eqref{eq:twist}, it is now possible to compute $\tau^{p,q}_{\mO_{C}(ap+bq)}$ for all $a,b\in \ZZ$. From $\tau^{p,q}_{\mO_C}$ we can also compute the splitting type of $\mO_C$ is $\vec{e} = (-3,-2,0)$. 
 \end{example}
\subsection{Demazure product} Our key technique for studying $W^{\tau}(C,p,q)$ is degeneration and the Demazure product is the tool that enables a concrete combinatorial description of transmission loci on the special fiber.
Pflueger \cite[Theorem A]{pflueger_extended_2022} defines the \defi{Demazure product} $\star$ on $\ASP$ via the slipface functions: for $\alpha,\beta\in \ASP$, $\alpha\star \beta\in \ASP$ is the permutation whose slipface function is given by
\begin{equation}\label{eq:demazure}
    s_{\alpha \star \beta}(a,b) = \min_{\ell\in \ZZ}[s_{\alpha}(a,\ell) + s_{\beta}(\ell,b)].
\end{equation}
When $\alpha$ and $\beta^{-1}$ have no inversions in common then $\alpha \star \beta$ is  just the ordinary product $\alpha\beta$. 

Of note for our purposes will be the Demazure product of a permutation with a transposition since that will enable us to do induction on the number of $k$-inversions. For each $i\in [k]$, let $\sigma^k_i\in \wt{\Sigma}_k$ denote the following permutation:
\begin{equation}\label{eq:transposition}
    \sigma^k_i(n) = 
    \begin{cases}
        n+1, & \text{ if } n \equiv i \pmod{k}\\
        n-1, & \text{ if } n \equiv i+1 \pmod{k}\\
        n, & \text{ otherwise.}
    \end{cases}
\end{equation}
Also, for $i \in [k],$ we say $\tau$ is \defi{sorted at $i$} if $\tau(i) < \tau(i+1)$.

\begin{prop}\label{prop:swap}
If $\tau$ is sorted at $i$, then 
\[s_{\tau\sigma^k_i} = s_{\tau\star\sigma^k_i}(a,b) = 
\begin{cases}
    s_{\tau}(a,b)+1 & \text{if } b\equiv i+1\pmod{k} \text{ and } \tau(b-1) < a \leq \tau(b)\\
    s_{\tau}(a,b) & \text{otherwise}.
\end{cases}
\]
\end{prop}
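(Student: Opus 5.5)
The plan is to compute the slipface function of the ordinary product $\tau\sigma^k_i$ directly from the definition. I will then identify $\tau\star\sigma^k_i$ with $\tau\sigma^k_i$ using the criterion recalled after \eqref{eq:demazure}: $\alpha\star\beta=\alpha\beta$ whenever $\alpha$ and $\beta^{-1}$ share no inversions.

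For the Demazure identification, take $\alpha=\tau$ and $\beta=\sigma^k_i$, so that $\beta^{-1}=\sigma^k_i$ since $\sigma^k_i$ is an involution. From \eqref{eq:transposition}, the inversions of $\sigma^k_i$ are exactly the pairs $(i+jk,\,i+1+jk)$ for $j\in\ZZ$. Since $\tau\in\wt{\Sigma}_k$, we have $\tau(i+jk)=\tau(i)+jk<\tau(i+1)+jk=\tau(i+1+jk)$, because $\tau$ is sorted at $i$. Hence none of these pairs is an inversion of $\tau$, and so $\tau\star\sigma^k_i=\tau\sigma^k_i$. (If one prefers not to rely on that criterion, I would instead verify it from \eqref{eq:demazure}. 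One computes $s_{\sigma^k_i}(\ell,b)$ explicitly: it equals $\max(\ell-b,0)$ corrected by $\pm1$ at the relevant residues. Then one checks that the minimum over $\ell$ is attained at $\ell=a$ or $\ell=a\pm1$. I expect this direct check to be the only fiddly part, so I would use the stated criterion.)

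For the formula, write $\sigma=\sigma^k_i$. Since $\sigma$ is a bijection,
\[
s_{\tau\sigma}(a,b)=\#\{n\ge b:\tau(\sigma(n))<a\}=\#\{m\in\sigma([b,\infty)):\tau(m)<a\}.
\]
The map $\sigma$ only swaps $i+jk$ with $i+1+jk$. So $\sigma([b,\infty))=[b,\infty)$ unless the cut at $b$ separates such a pair, which happens exactly when $b\equiv i+1\pmod k$. In that case $\sigma([b,\infty))=([b,\infty)\setminus\{b\})\cup\{b-1\}$. Therefore
\[
s_{\tau\sigma}(a,b)-s_\tau(a,b)=[\tau(b-1)<a]-[\tau(b)<a]
\]
when $b\equiv i+1\pmod k$, and the difference is $0$ otherwise.

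Finally, $b-1\equiv i\pmod k$, so by $k$-periodicity and sortedness at $i$ we have $\tau(b-1)<\tau(b)$. Consequently the bracket difference is never $-1$. It equals $1$ precisely when $\tau(b-1)<a\le\tau(b)$, which gives the stated formula.
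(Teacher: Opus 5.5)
Your proof is correct, but it computes the opposite side of the identity from the paper. The paper asserts $\tau\sigma^k_i=\tau\star\sigma^k_i$ from sortedness without checking it, writes $s_{\sigma^k_i}(\ell,b)=\max\{\ell-b,0\}+\delta(\ell=b\equiv i+1\pmod{k})$, and evaluates the min-plus formula \eqref{eq:demazure}. The function $f(\ell)=s_\tau(a,\ell)+s_{\sigma^k_i}(\ell,b)$ is non-increasing for $\ell<b$ and non-decreasing for $\ell>b$, so only $\ell\in\{b-1,b,b+1\}$ matter, and a short case check gives the formula.

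You instead verify the no-common-inversions hypothesis explicitly. You then count $s_{\tau\sigma^k_i}$ directly by reindexing through $\sigma^k_i$, using that $\sigma^k_i([b,\infty))$ differs from $[b,\infty)$ only by trading $b$ for $b-1$ when $b\equiv i+1\pmod{k}$.

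Your count is more elementary. It shows $s_{\tau\sigma^k_i}(a,b)-s_\tau(a,b)=[\tau(b-1)<a]-[\tau(b)<a]$ for every $\tau$, with sortedness entering only to rule out the value $-1$. The paper's min-plus analysis, on the other hand, never actually uses sortedness. So it also shows $\tau\star\sigma^k_i=\tau$ when $\tau$ is unsorted at $i$, since the exceptional region is then empty.

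Putting the two computations side by side removes the need for the inversion criterion altogether. They produce the same function, and a permutation is determined by its slipface function, so $\tau\star\sigma^k_i=\tau\sigma^k_i$ follows.
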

\begin{proof}
First, note that
\[s_{\sigma^k_i}(a,b) = \max\{a-b,0\} + \delta(a=b\equiv i+1\pmod{k}).\]
Now, since $\tau$ is sorted at $i$, $\tau\sigma^k_i = \tau\star \sigma^k_i$, so
\[s_{\tau\sigma^k_i}(a,b) = s_{\tau\star \sigma^k_i}(a,b) = \min_{l\in \ZZ}[s_{\tau}(a,\ell)+s_{\sigma_{i}}(\ell,b)]. \]
Fix $(a,b)$. Denote $f(\ell) = s_{\tau}(a,\ell)+s_{\sigma_{i}}(\ell,b)$. Notice that $f(\ell)\geq s_{\tau}(a,b)$ whenever $\ell>b$ and $f(\ell) = s_{\tau}(a,\ell) \geq s_{\tau}(a,b)$ whenever $\ell < b$. Also,
\[f(b) = s_{\tau}(a,b)+\delta(b\equiv i+1 \pmod{k}).\]
Thus, $s_{\tau\sigma^k_i}(a,b)$ is always at least $ s_{\tau}(a,b)$.

So, suppose $b\not\equiv i+1 \pmod{k}$. Then $f(b) = s_{\tau}(a,b)$, so $s_{\tau\sigma^k_i}(a,b) = s_{\tau}(a,b)$. Next, if $a > \tau(b)$, then 
\[f(b+1) = (s_{\tau}(a,b)-1) + 1 = s_{\tau}(a,b),\]
so $s_{\tau\sigma^k_i}(a,b) = s_{\tau}(a,b)$. Similarly, if $a \leq \tau(b-1)$, then 
\[f(b-1) = s_{\tau}(a,b) + 0 =  s_{\tau}(a,b),\]
so $s_{\tau\sigma^k_i}(a,b) = s_{\tau}(a,b)$ in this case as well.

Finally, suppose $b\equiv i+1 \pmod{k}$ and $\tau(b-1) < a \leq \tau(b)$. In this case, note that \[f(b-1) = f(b) = f(b+1) = s_{\tau}(a,b)+1\]
and since $f$ is non-increasing for $\ell < b$ and non-decreasing for $\ell > b$, we see that $s_{\tau\sigma^k_i}(a,b) = s_{\tau}(a,b)+1$ in this case.
    \end{proof}

\subsection{Essential twists} While the definition of $W^{\tau}(C,p,q)$ involves checking one inequality for each pair $(a,b)\in \ZZ^2$, we can cut this down substantially using the \defi{essential set} of $\tau \in \ASP$:
\[\Ess(\tau) \coloneqq \{(a,b):\ZZ^2: \tau^{-1}(a-1)  \geq b > \tau^{-1}(a), \tau(b-1) \geq a > \tau(b)\}.\]
 A permutation $\tau$ is said to be of \defi{bounded difference} if $\abs{\tau(n)-n}$ is bounded; note that any $\tau \in \wt\Sigma_k$ has bounded difference. We use the following Lemma from \cite{pflueger_extended_2022} without proof.
\begin{lem}[Corollary 7.9 \cite{pflueger_extended_2022}]\label{lem:ess_set} 
    If $\alpha,\beta \in  \ASP$ have the same shift and $\alpha$ has bounded difference, then $\alpha \leq \beta$ if and only if $s_{\alpha}(a,b) \leq s_{\beta}(a,b)$ for all $(a,b) \in \Ess(\alpha)$.
\end{lem}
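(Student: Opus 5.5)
The forward implication is immediate: if $\alpha\le\beta$ then $s_\alpha\le s_\beta$ everywhere, in particular on $\Ess(\alpha)$. For the converse I would run a Fulton-style ``essential set'' argument on the difference
\[D(a,b)\coloneqq s_\beta(a,b)-s_\alpha(a,b).\]
The plan has three steps: show that $D$ is nonnegative away from the diagonal, show that $D$ attains a minimum, and show that from any non-essential point one can move to a neighbour without increasing $D$. Combined with the hypothesis on $\Ess(\alpha)$, these force $D\ge 0$.

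\textbf{Step 1: local increments.} For any $\gamma\in\ASP$, directly from $s_\gamma(a,b)=\#\{n\ge b:\gamma(n)<a\}$ one has
\[s_\gamma(a+1,b)-s_\gamma(a,b)=[\gamma^{-1}(a)\ge b]\in\{0,1\},\qquad s_\gamma(a,b)-s_\gamma(a,b+1)=[\gamma(b)<a]\in\{0,1\}.\]
Applying these to $\alpha$ exactly and to $\beta$ only as bounds gives four moves. Each failure of one of the four inequalities defining $\Ess(\alpha)$ at $(a,b)$ lets us move to a neighbour where $D$ is no larger:
\begin{itemize}
\item if $\alpha^{-1}(a-1)<b$, then $s_\alpha(a,b)=s_\alpha(a-1,b)$ while $s_\beta$ is nondecreasing in $a$, so $D(a-1,b)\le D(a,b)$;
\item if $\alpha^{-1}(a)\ge b$, then $s_\alpha$ gains $1$ at $(a+1,b)$ while $s_\beta$ gains at most $1$, so $D(a+1,b)\le D(a,b)$;
\item if $\alpha(b-1)<a$, then $s_\alpha$ gains $1$ at $(a,b-1)$ while $s_\beta$ gains at most $1$, so $D(a,b-1)\le D(a,b)$;
\item if $\alpha(b)\ge a$, then $s_\alpha(a,b+1)=s_\alpha(a,b)$ while $s_\beta$ is nonincreasing in $b$, so $D(a,b+1)\le D(a,b)$.
\end{itemize}

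\textbf{Step 2: $D$ is nonnegative off a strip and bounded below.} Counting gives the identity
\[s_\gamma(a,b)-(\chi_\gamma+a-b)=\#\{n<b:\gamma(n)\ge a\}\ge 0,\]
so $s_\beta\ge\max\{0,\chi+a-b\}$, where $\chi=\chi_\alpha=\chi_\beta$. Since $\alpha$ has bounded difference, say $\abs{\alpha(n)-n}\le N$, this bound is an equality for $\alpha$ once $\abs{a-b}>2N$. Hence $D\ge 0$ outside the strip $\abs{a-b}\le 2N$. Inside the strip, $s_\alpha(a,b)\le\#\{n\ge b:n<a+N\}\le 3N$, so $D\ge -3N$. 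Therefore, if $D$ takes a negative value anywhere, it attains a minimum $m<0$, and the set $Z$ of minimizers lies in the strip.

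\textbf{Step 3: walking inside $Z$ to the essential set (main obstacle).} By Step 1, every move from a point of $Z$ stays in $Z$. By the hypothesis, $Z\cap\Ess(\alpha)=\varnothing$. So it suffices to show that the walk in Step 1 can be steered into $\Ess(\alpha)$ rather than cycling or running off along the strip.

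The first thing I would try is to handle the two coordinates separately. Fix $b$ and slide $a$. Moving down while $\alpha^{-1}(a-1)<b$ and up while $\alpha^{-1}(a)\ge b$ lands, after boundedly many steps, at a value with $\alpha^{-1}(a-1)\ge b>\alpha^{-1}(a)$. Such a value exists because $\{a:\alpha^{-1}(a)\ge b\}$ is an up-set up to finitely many exceptions, and a descent of the indicator must occur. I would choose $a$ to be the largest such descent within the strip. Then I would slide $b$ in the same way to satisfy $\alpha(b-1)\ge a>\alpha(b)$.

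The delicate point is to check that the $b$-slide can be chosen to preserve the $a$-condition. For this I would use the characterisation of these points as the ``corners'' of the diagram of $\alpha$ (the boxes $(n,\alpha(n))$ with $n\ge b$, $\alpha(n)<a$ against those with $n<b$, $\alpha(n)\ge a$), and argue that a lexicographically extremal point of $Z$, for instance one maximizing $a$ and then minimizing $b$, already satisfies all four conditions. This would contradict $Z\cap\Ess(\alpha)=\varnothing$ and complete the proof. I expect this extremal-point argument, which is where the finiteness of $Z$ modulo the diagonal direction must be controlled, to be the main technical obstacle.
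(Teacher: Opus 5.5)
The paper does not prove this lemma; it quotes it from Pflueger. So your argument has to stand on its own.

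\textbf{Steps 1 and 2 are sound.} The four moves never increase $D$. Equal shifts give $D\ge 0$ off the strip $\abs{a-b}\le 2N$ and $D\ge -3N$ on it. So if $D$ is negative somewhere, its minimum is attained and the minimizer set $Z$ lies in the strip.

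\textbf{The gap is in Step 3.} That step carries the whole content of the lemma, and the extremal-point argument you settle on fails in two ways.

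First, a point of $Z$ maximizing $a$ need not exist. For $\alpha,\beta\in\wt{\Sigma}_k$, which is the case the paper needs, $D(a+k,b+k)=D(a,b)$, so $Z$ is invariant under diagonal translation.

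Second, even when $Z$ is finite, maximality of $a$ and then minimality of $b$ only exclude the moves to $(a+1,b)$ and $(a,b-1)$. That gives $b>\alpha^{-1}(a)$ and $\alpha(b-1)\ge a$, but says nothing about $\alpha^{-1}(a-1)\ge b$ or $a>\alpha(b)$. For a concrete failure, let $\alpha,\beta$ be the identity outside $\{0,1,2,3\}$ with windows $(2,3,0,1)$ and $(0,3,2,1)$. Both have shift $0$. One checks that $D=-1$ exactly at the four points $(a,b)\in\{1,2\}^2$ and $D=0$ elsewhere, and that $\Ess(\alpha)=\{(2,2)\}$. Your extremal point is $(2,1)$, where $\alpha(1)=3\ge 2$, so it is not essential.

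\textbf{How to close the gap.} You need an ordered walk in which later moves provably preserve conditions already secured. Your first idea in Step 3 does exactly this, and the ``delicate point'' you worried about is in fact immediate. Start anywhere in $Z$.
\begin{enumerate}
\item Slide $a$. If $\alpha^{-1}(a)\ge b$, move to $(a+1,b)$ repeatedly. Each new point satisfies $\alpha^{-1}(a-1)\ge b$, since that is the failure just used.
\item Otherwise, if $\alpha^{-1}(a-1)<b$, move to $(a-1,b)$ repeatedly. Each new point satisfies $b>\alpha^{-1}(a)$, for the same reason.
\item Then slide $b$ the same way: up while $\alpha(b)\ge a$ (each new point satisfies $\alpha(b-1)\ge a$), or, if $a>\alpha(b)$ already holds, down while $\alpha(b-1)<a$ (each new point satisfies $a>\alpha(b)$).
\end{enumerate}

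The $b$-moves preserve the two $a$-conditions.
\begin{itemize}
\item Moving up uses $\alpha(b)\ge a$, so $\alpha(b)\ne a-1$, and $\alpha^{-1}(a-1)\ge b+1$ persists.
\item Moving down uses $\alpha(b-1)<a$, so $\alpha(b-1)\ne a$, and $\alpha^{-1}(a)<b-1$ persists.
\end{itemize}

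Every stage stays in $Z$ by Step 1. Each stage moves monotonically along a row or column, which meets the strip in finitely many points, so it terminates. No extremal choice and no finiteness of $Z$ is needed. The endpoint lies in $Z\cap\Ess(\alpha)$, where $D<0$, contradicting the hypothesis.
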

We call a permutation $\alpha \in \wt{\Sigma}_k$ \defi{sorted} if $\alpha(0) < \alpha(1) < \dots < \alpha(k-1)$. 
\begin{cor}\label{cor:sorted_twists}
Suppose $\alpha \in \wt{\Sigma}_k$ is sorted and $\beta \in \wt{\Sigma}_k$ has the same shift as $\alpha$. Then $\alpha \leq \beta$ if and only if $s_{\alpha}(a,0) \leq s_{\beta}(a,0)$ for all $a\in \ZZ$.
\end{cor}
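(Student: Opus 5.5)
The plan is to deduce the corollary from \cref{lem:ess_set}, by showing that for a sorted $\alpha$ every essential box sits in a row $b$ divisible by $k$, and then translating that row to $b=0$ using \eqref{eq:k_invariance}.

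The forward implication is immediate from the definition of the Bruhat order, since $\alpha\le\beta$ means $s_\alpha\le s_\beta$ everywhere, in particular on the row $b=0$. For the converse, $\alpha\in\wt{\Sigma}_k$ has bounded difference and $\chi_\alpha=\chi_\beta$ by hypothesis. So \cref{lem:ess_set} applies, and it suffices to check $s_\alpha(a,b)\le s_\beta(a,b)$ for every $(a,b)\in\Ess(\alpha)$.

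First I locate the essential set. If $(a,b)\in\Ess(\alpha)$, then by definition $\alpha(b-1)\ge a>\alpha(b)$, so in particular $\alpha(b-1)>\alpha(b)$, i.e.\ $b-1$ is a descent of $\alpha$. Write $b-1=mk+r$ with $r\in[k]$. If $r\le k-2$, then both $b-1$ and $b$ lie in the block $\{mk,\dots,mk+k-1\}$. There $\alpha(mk+j)=\alpha(j)+mk$, and sortedness gives $\alpha(b-1)=\alpha(r)+mk<\alpha(r+1)+mk=\alpha(b)$, which is a contradiction. Hence $r=k-1$ and $b=(m+1)k$ is a multiple of $k$. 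This is the only step with any content, and it is just the observation that the window $\alpha(0)<\dots<\alpha(k-1)$ is increasing and is translated by $k$ from block to block.

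Now write $b=m'k$. Both $\alpha$ and $\beta$ lie in $\wt{\Sigma}_k$, so iterating \eqref{eq:k_invariance} $m'$ times gives $s_\alpha(a,m'k)=s_\alpha(a-m'k,0)$ and $s_\beta(a,m'k)=s_\beta(a-m'k,0)$. The hypothesis at $a-m'k$ then yields $s_\alpha(a,b)\le s_\beta(a,b)$ on all of $\Ess(\alpha)$. \Cref{lem:ess_set} now gives $\alpha\le\beta$. I do not expect a genuine obstacle; the only care needed is the bookkeeping of the descent location and the direction of the translation in \eqref{eq:k_invariance}.
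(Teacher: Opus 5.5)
Your proposal is correct and follows the paper's own proof: use \cref{lem:ess_set}, note that sortedness of $\alpha$ forces every $(a,b)\in\Ess(\alpha)$ to have $b\equiv 0 \pmod{k}$ (since $\alpha(b-1)>\alpha(b)$ requires $b$ to be a block boundary), and translate to $b=0$ via \eqref{eq:k_invariance}. Your version simply writes out the descent and translation bookkeeping that the paper compresses into one line.
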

\begin{proof}
    We see that $(a,b)\in \Ess(\alpha)$ implies $b \equiv 0 \pmod{k}$ since $\alpha(b-1) > \alpha(b)$ is only possible then. Since $s_{\alpha}(a,kq) = s_{\alpha}(a-kq,0)$ for all $a,q\in \ZZ$ and similarly for $\beta$, we conclude by \cref{lem:ess_set}.
\end{proof}
 Therefore, if $\tau$ is sorted, 
 \begin{equation*}
 W^{\tau}(C,p,q) = \{\mL\in\Pic^{\chi_\tau+g}(C): h^0(\mL(ap)) \geq s_{\tau}(a+1,0) \text{ for all } a \in \ZZ\}.
  \end{equation*}
  We now proceed to define certain essential twists corresponding to $\tau$ so that the presence of a ``new" section in each such twist for a line bundle verifies the inclusion of that line bundle in $W^{\tau}(C,p,q)$. Because of the property \eqref{eq:k_invariance}, it suffices to study the behavior of a line bundle at the twists in the set $T_k$ defined in \cref{dfn:order}. We also endow $T_k$ with a partial order to encapsulate the notion of which twists include into others via the natural maps of multiplying by sections pulled back from $\PP^1$ along the covering map, as well as powers of the constant sections of $\mO_{C}(p)$ and $\mO_{C}(q)$.

\begin{dfn}\label[dfn]{dfn:order}
Define the sets $T_k \coloneqq \ZZ \times \{-k+1,-k+2,\dots,0\}$. We endow $T_k$ with a partial order $\prec$ given by
\[(a_1,b_1) \prec (a_2,b_2) \text{ if }  (a_1 < a_2 \text{ and  } b_1 \leq b_2) \text{ or } a_1 + k < a_2.\]
Notice that if $t_1 = (a_1,b_1) \prec (a_2,b_2) = t_2$ then there exist unique integers $ 0\leq q_{t_1,t_2}, 0 \leq r_{t_1,t_2},s_{t_1,t_2} < k$ such that 
\begin{equation}\label{eq:twist_up}
    \mO_{C}(a_1p + b_1q) \otimes \mO_{C}((q_{t_1,t_2}k+r_{t_1,t_2})p + s_{t_1,t_2}q) \cong \mO_{C}(a_2p + b_2q)
\end{equation}
whenever $kp \sim kq$ on $C$.
\end{dfn}

The following key definition makes precise the notion of ``new" sections in the essential twists we shall define.
\begin{dfn}\label[dfn]{dfn:non-colliding}
    Let $\mL$ be a line bundle on $(C,p,q) \in \mathcal{H}_{k,g,2}$ along with two global sections $\mu_i \in H^0(C, \mL(a_ip+b_iq))$ for some $t_i = (a_ip+b_iq)\in T_k$ for $i=1,2$. Let the order of vanishing of $\mu_i$ at $q$ be $v_i$. We say $\mu_1$ and $\mu_2$ are \defi{non-colliding at $q$} if for each $t\in T_k$ such that $t_{1}\prec t$ and $t_{2} \prec t$, we have 
     \begin{equation}\label{eq:non-col}
        v_{1}+s_{t_{1},t}\not\equiv v_{2}+s_{t_{2},t}\pmod{k},
    \end{equation}
    where $s_{t_{1},t}$ and $s_{t_{2},t}$ are as in \cref{dfn:order}. Note that a single twist witnessing the non-colliding property \eqref{eq:non-col} verifies the property for all comparable twists. Further, a collection of $n$ global sections $\mu_i \in H^0(C, \mL(a_ip+b_iq))$ for some $t_i = (a_i,b_i)\in T_k$ for $i\in[n]$ is called \defi{non-colliding at $q$} if the sections are pairwise non-colliding at $q$.
\end{dfn}

\begin{dfn}\label[dfn]{dfn:ess_twists}
    For a permutation $\alpha \in \wt{\Sigma}_k$, define the function $e_\alpha: [k]\to \ZZ$ by
\[e_{\alpha}(i) \coloneqq \begin{cases}
        -i & \text{ if there exists } 0 \leq j < i \text{ such that } \alpha(j) > \alpha(i)\\
        0 &\text{ otherwise.}
    \end{cases}
\]
\end{dfn}

The $k$ twists $\{(\alpha(i),e_{\alpha}(i))\}_{i\in [k]}$ will serve as the \defi{essential twists} for $\alpha$ and the following proposition explains the name.

 \begin{prop}\label{prop:non-colliding}
Let $\alpha \in \wt{\Sigma}_k$. Suppose $\mL \in \Pic^{\chi_{\alpha}+g}(C)$ is a line bundle for which there exist sections $\mu_i \in H^0(\mL(\alpha(i)p+e_{\alpha}(i)q))$ for each $i\in [k]$, that are non-colliding at $q$. Then $\tau^{p,q}_{\mL} \geq\alpha$. 
\end{prop}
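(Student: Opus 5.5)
The plan is to reduce to checking finitely many inequalities, and then to produce, in each relevant twist, enough global sections with pairwise distinct orders of vanishing at $q$.

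\textbf{Reduction.} Both $\tau=\tau^{p,q}_{\mL}$ and $\alpha$ lie in $\wt{\Sigma}_k$ and have the same shift, since $\chi_\tau=d-g=\chi_\alpha$ by \eqref{eq:degree}. Also $\alpha$ has bounded difference. So by \cref{lem:ess_set} it suffices to prove
\[
h^0\bigl(\mL((a-1)p-bq)\bigr)\;\geq\; s_\alpha(a,b)=\#\{n\geq b:\ \alpha(n)<a\}
\]
for $(a,b)\in\Ess(\alpha)$. By \eqref{eq:k_invariance} and $kp\sim kq$, I may translate $b$ by multiples of $k$ and assume $-k+1\le -b\le 0$, so that the target twist $t=(a-1,-b)$ lies in $T_k$.

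\textbf{Producing sections.} Fix $n\ge b$ with $\alpha(n)<a$, and write $n=i+km$ with $i\in[k]$. Using $kp\sim kq$, I identify $\mL(\alpha(i)p+e_\alpha(i)q)$ with the twist $t_n=(\alpha(n),\,e_\alpha(i)-km)$, shifted back into $T_k$. The first goal is to show $t_n\preceq t$, which means $\mO_C(t-t_n)$ is effective by \eqref{eq:twist_up}. Then $\mu_i$ times the section of $\mO_C((q_{t_n,t}k+r_{t_n,t})p+s_{t_n,t}q)$ built from powers of the canonical sections of $\mO_C(p)$ and $\mO_C(q)$ gives a section $\nu_n\in H^0(\mL((a-1)p-bq))$.

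Where $t_n\preceq t$ fails, I would instead multiply by a different monomial, choosing among powers of the sections of $\mO_C(p)$, $\mO_C(q)$ and pullbacks of sections of $\mO_{\PP^1}(1)$. The purpose is to keep the output in the same twist while arranging that the vanishing order at $q$ increases by different multiples of $k$ for different $m$.

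\textbf{Linear independence.} The sections $\nu_n$ will be independent because their vanishing orders at $q$ are pairwise distinct:
\begin{itemize}
\item For different residues $i\neq i'$, the orders differ mod $k$; this is exactly the non-colliding hypothesis \eqref{eq:non-col} applied at the common upper bound $t$.
\item For the same $i$ and different $m$, the orders differ by the distinct multiples of $k$ contributed by the multiplying sections.
\end{itemize}
Hence $h^0\ge\#\{n\}=s_\alpha(a,b)$.

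\textbf{Main obstacle.} The hard step is the combinatorial verification that every $n\ge b$ with $\alpha(n)<a$ admits such a multiplier landing exactly in twist $t$. The $q$-coordinate $e_\alpha(i)-km$ must be at most $-b$, which is automatic when $e_\alpha(i)=-i$ (then it equals $-n\le -b$). When $e_\alpha(i)=0$ it can fail. This is precisely where I would use the essential-set conditions
\[
\tau^{-1}(a-1)\ge b>\alpha^{-1}(a),\qquad \alpha(b-1)\ge a>\alpha(b),
\]
together with the definition of $e_\alpha$. The point of $e_\alpha(i)=0$ is that no earlier index $j<i$ has $\alpha(j)>\alpha(i)$. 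Combined with the essential-set inequalities, this should force the troublesome pairs into the alternative branch of $\prec$, namely $a_1+k<a_2$. In that branch one can absorb $q$'s using $kp\sim kq$, at the cost of $k$ powers of $p$.

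I would first test this case analysis on sorted $\alpha$, where \cref{cor:sorted_twists} reduces everything to $b=0$ and $e_\alpha\equiv 0$. I would then handle general $\alpha$ by tracking the positions $j<i$ with $\alpha(j)>\alpha(i)$.
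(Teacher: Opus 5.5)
Your reduction via \cref{lem:ess_set} to the inequalities $h^0(\mL((a-1)p-bq))\ge s_\alpha(a,b)$ on $\Ess(\alpha)$, followed by a count of sections with distinct vanishing orders at $q$, is a viable strategy. It differs from the paper's route. The paper proves the sorted case via \cref{cor:sorted_twists} and then inducts on $\inv_k(\alpha)$: it transfers the non-colliding sections to $\alpha'=\alpha\sigma^k_{j_0}$, then uses \cref{prop:swap} to reduce the remaining inequalities to the twists $(\alpha'(j_0)+\ell,-(j_0+1))$, where it exhibits one extra section. However, as written your proof has a gap exactly at the step you call the main obstacle, and the mechanism you propose there cannot work.

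Write $n=i+km$ with $i\in[k]$. Since $0\le b\le k-1$ and $n\ge b$, we have $m\ge 0$. The multiplier $\eta_p^{\,a-1-\alpha(n)}\eta_q^{\,km-b-e_\alpha(i)}$ has a negative exponent only when $m=0$, $e_\alpha(i)=0$ and $n=i\ge b\ge 1$.

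Suppose such an $i$ with $\alpha(i)<a$ existed. Passing to the branch $a_1+k<a_2$, i.e. trading $k$ copies of $p$ for $k$ copies of $q$, would not rescue it. Every section of $\mL((a-1)p-bq)$ you can build from $\mu_i$ using monomials in $\eta_p,\eta_q$ and pullbacks from $\PP^1$ lies in the span of the $\mu_i\eta_p^x\eta_q^y$ with $x,y\ge 0$ and $y\equiv -b\pmod k$. Hence $y\ge k-b$, and there are only $\lfloor (a-1-\alpha(i))/k\rfloor$ such sections. That is one fewer than the number of $n\equiv i$ with $n\ge b$ and $\alpha(n)<a$.

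What is actually true is that the bad case never occurs. The condition $e_\alpha(i)=0$ means $\alpha(j)<\alpha(i)$ for all $0\le j<i$, in particular for $j=b-1$. So $\alpha(i)>\alpha(b-1)\ge a$ by the condition $\alpha(b-1)\ge a>\alpha(b)$ defining $\Ess(\alpha)$, contradicting $\alpha(i)<a$. This one observation is the missing idea. With it, every multiplier has nonnegative exponents and your count closes. The vague ``shift $t_n$ back into $T_k$ and check $t_n\preceq t$'' should simply be replaced by this nonnegativity check.

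A smaller repair concerns the non-colliding hypothesis. The target $t=(a-1,-b)$ need not be a strict $\prec$-upper bound of $t_i=(\alpha(i),e_\alpha(i))$; for example this fails if $\alpha(i)=a-1$ for some $i\in[k]$, which $\Ess(\alpha)$ allows. So apply \eqref{eq:non-col} at a common upper bound $t'=(a',b')$ with $a'$ large instead. Since $s_{t_i,t'}\equiv b'-e_\alpha(i)\pmod k$, the hypothesis says exactly that the residues $v_i-e_\alpha(i)$ are pairwise distinct mod $k$. Each $\nu_n$ vanishes at $q$ to order $\equiv v_i-e_\alpha(i)-b$, which is what your independence argument needs.

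Once repaired, your argument is a single direct count that avoids the induction and \cref{prop:swap}. The price is that it uses Pflueger's essential-set criterion for unsorted $\alpha$ together with the explicit shape of $\Ess(\alpha)$. The paper only uses the sorted special case and handles descents one at a time.
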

\begin{proof}
    We prove the claim directly when $\alpha$ is sorted and then proceed by induction on the number of $k$-inversions in $\alpha$ due its ordering. 
    
\textbf{Step 1}: Suppose $\alpha$ is sorted; in this case $e_{\alpha}(i) = 0$ for all $i$. Let $\eta_{p}$ and  $\eta_{q}$ be the constant sections of $\mO_C(p)$ and $\mO_C(q)$ respectively.  Let $a\in \ZZ$. Note that 
\begin{equation*}
    s_{\alpha}(a+1,0) = \#\{n\geq 0: \alpha(n) \leq a\} = \sum_{i=0}^{k-1}\max\Big\{\Big\lfloor{\frac{a-\alpha(i)}{k}\Big\rfloor}+1, 0\Big\}.
\end{equation*}
Now, for each $i\in [k]$ such that $\alpha(i) \leq a$, there is a natural multiplication map with sections pulled back from $\PP^1$:
\begin{equation*}
    \langle \mu_i \rangle\otimes H^0(\PP^1, \mO_\PP^1(q_{i,a})) \hookrightarrow H^0(\mL(\alpha(i)\cdot p)) \otimes H^0(\PP^1, \mO_\PP^1(q_{i,a})) \xr{\cdot \eta_p^{r_{i,a}}} H^0(\mL(ap))
\end{equation*}
where $q_{i,a} = \lfloor\frac{a-\alpha(i)}{k}\rfloor$ and $r_{i,a} = a-\alpha(i)-q_{i,a}k \geq 0$. We thus, get a map
\begin{equation*}
    \bigoplus_{i\in[k]: \alpha(i) \leq a}\langle \mu_i \rangle\otimes H^0(\PP^1, \mO_\PP^1(q_{i,a})) \to H^0(\mL(ap)),
\end{equation*}
which is injective by comparing orders of vanishing at $q$. So, since $h^0(\PP^1,\mO_{\PP^1}(q_{i,a})) = q_{i,a}+1$, we find that
\[h^0(\mL(a p)) \geq \sum_{i\in[k]: \alpha(i) \leq a}(q_{i,a}+1) = s_{\alpha}(a+1,0)\]
for all $a\in \ZZ$.
By \cref{cor:sorted_twists}, it follows that $\tau^{p,q}_{\mL} \geq \alpha$.

\textbf{Step 2}: Now, let $\alpha$ be unsorted and let $\mu_i \in H^0(\mL(\alpha(i)p+e_{\alpha}(i)q))$ be non-colliding sections as in the statement of the proposition. Take 
\[j_0 = \argmax\{\alpha(j'): \alpha(j') > \alpha(j'+1), j'\in [k-1]\}.\]
Let $\alpha' = \alpha \sigma^k_{j_0}$. So, $\alpha'$ is sorted at $j_0$, $\alpha = \alpha' \sigma^k_{j_0}$, and $\inv_k(\alpha') = \inv_k(\alpha)-1$. Suppose the proposed claim is true for $\alpha'$. We will show that the non-colliding sections $\{\mu_i\}$ give rise to non-colliding sections $\{\mu'_i\}$ in the essential twists of $\alpha'$. For $i \not \in \{j_0,j_0+1\}$, it is clear that $(\alpha'(i), e_{\alpha'}(i)) = (\alpha(i), e_\alpha(i))$; thus, set $\mu'_i = \mu_{i}$ for $i\not\in\{j_0,,j_0+1\}$. Next, by choice of $j_0$, note that 
\[e_{\alpha}(j_0) = 0 = e_{\alpha'}(j_0+1),\]
and $\alpha(j_0) = \alpha'(j_0+1)$, so we can set $\mu'_{j_0+1} = \mu_{j_0}$. Lastly, $e_{\alpha}(j_0+1) = -(j_0+1)$, so \[\mu_{j_0+1} \in H^0(\mL(\alpha(j_0+1)p-(j_0+1)q)) = H^0(\mL(\alpha'(j_0)p-(j_0+1)q)),\] which maps to $H^0(\mL(\alpha'(j_0)p+e_{\alpha'}(j_0)q))$ via multiplication with $\eta_q^{e_{\alpha'}(j_0)+(j_0+1)}$ (note $e_{\alpha'}(j_0) \geq -j_0$). Therefore, taking $\mu'_{j_0} = \mu_{j_0+1}\cdot \eta_q^{e_{\alpha'}(j_0)+(j_0+1)}$, we get non-colliding sections $\{\mu'_{i}\}$ in the essential twists of $\alpha'$. Thus, by the inductive hypothesis, $\tau^{p,q}_{\mL} \geq \alpha'$. 

\textbf{Step 3}: Now, by Proposition \ref{prop:swap}, to show $\tau^{p,q}_{\mL} \geq \alpha$, it suffices to show that 
\[h^0(\mL((\alpha'(j_0)+\ell) p - (j_0+1)q)) \geq s_{\alpha'}(\alpha'(j_0)+\ell+ 1, j_0+1) + 1\]
for each $0 \leq\ell< \alpha'(j_0+1)-\alpha'(j_0)$. For convenience, denote $\mM_{\ell} = \mL((\alpha'(j_0)+\ell) p - (j_0+1)q)$.

We have 
\begin{align*}
    s_{\alpha'}(\alpha'(j_0)+ \ell + 1, j_0+1) = &\sum_{i=0}^{j_0} \max
\Big\{\Big\lfloor{\frac{\alpha'(j_0)+ \ell - \alpha'(i)}{k}\Big\rfloor},0\Big\}\\
&+ \sum_{i=j_0+1}^{k-1} \max
\Big\{\Big\lfloor{\frac{\alpha'(j_0)+ \ell - \alpha'(i)}{k}\Big\rfloor}+1,0\Big\}.\numberthis \label{eq:h0_intermediate}
\end{align*}

First, if $0 \leq i < j_0$ and $\Big\lfloor{\frac{\alpha'(j_0)+ \ell - \alpha'(i)}{k}\Big\rfloor} > 0$, then \[t_i \coloneqq (\alpha'(i),e_{\alpha'}(i)) \prec (\alpha'(j_0)+\ell,-(j_0+1)) =: w_{\ell},\]
so if $q_{t_i,w_{\ell}}, s_{t_i,w_{\ell}}, r_{t_i,w_{\ell}}$ are as in \cref{dfn:order}, then $\langle\mu_{i}\rangle\otimes H^0(\mO_{\PP^1}(q_{t_i,w_{\ell}}))$ maps injectively into $H^0(\mM_{\ell})$ via multiplication with $\eta_p^{r_{t_i,w_{\ell}}}\cdot \eta_q^{s_{t_i,w_{\ell}}}$. Note that
\begin{equation}\label{eq:le_j}
    q_{t_i,w_{\ell}} = \Big\lfloor{\frac{\alpha'(j_0)+ \ell - \alpha'(i)}{k}\Big\rfloor} -1 \;\; (0 \leq i < j_0).
\end{equation}

Second, if $j_0 <i < k$ and $\Big\lfloor{\frac{\alpha'(j_0)+ \ell - \alpha'(i)}{k}\Big\rfloor} \geq 0$, then \[\alpha'(i) \leq \alpha'(j_0)+\ell < \alpha'(j_0+1)\] so, $e_{\alpha'}(i) = -i < -(j_0+1)$. So, again
\[t_i = (\alpha'(i),-i) \prec w_{\ell},\]
and $\langle\mu_{i}\rangle\otimes H^0(\mO_{\PP^1}(q_{w_{\ell,t_i}}))$ maps injectively into $H^0(\mM_\ell)$ via multiplication with $\eta_p^{r_{t_i,w_{\ell}}}\cdot \eta_q^{s_{t_i,w_{\ell}}}$. In this case,
\begin{equation}\label{eq:ge_j}
    q_{t_i,w_{\ell}} = \Big\lfloor{\frac{\alpha'(j_0)+ \ell - \alpha'(i)}{k}\Big\rfloor} \;\; (j_0 < i < k).
\end{equation}

Finally, for $i=j_0$, $\mu_{j_0+1} \in  H^0(\mL(\alpha'(j_0) p - (j_0+1)q)$, so since \[t \coloneqq (\alpha'(j_0), - (j_0+1)) \prec w_{\ell},\] $\langle\mu_{j_0+1}\rangle\otimes H^0(\mO_{\PP^1}(q_{t,w_{\ell}}))$ maps injectively into $H^0(\mM_\ell)$ via multiplication with $\eta_p^{r_{t,w_{\ell}}}$ (note $s_{t,w_{\ell}} = 0$); 
\begin{equation}\label{eq:additional_section}
    q_{t,w_{\ell}} = \Big\lfloor{\frac{\alpha'(j)+ \ell-\alpha'(j)}{k}\Big\rfloor} = \Big\lfloor{\frac{\ell}{k}\Big\rfloor} .
\end{equation}

Thus, we see from comparing \eqref{eq:le_j}, \eqref{eq:ge_j}, and \eqref{eq:additional_section} with \eqref{eq:h0_intermediate} that
\begin{align*}
    h^0(\mM_\ell) \geq (q_{t,w_{\ell}}+1) + \sum_{\substack{i=0\\i\neq j_0}}^{k-1} (q_{t_i,w_\ell}+1) =  s_{\alpha'}(\alpha'(j_0)+ \ell + 1, j_0+1)+1.
\end{align*}
This completes the proof.
\end{proof}

 \begin{rmk}
     The constraints 
     \begin{equation*}
         h^0(\mL(\alpha(i)p+e_{\alpha}(i)q)) \geq s_{\alpha}(\alpha(i)+1, -e_{\alpha}(i))
     \end{equation*} for all $i\in [k]$ are insufficient to ensure that $\tau_{\mL}^{p,q} \geq \alpha$. For example, if $\alpha = (1,4,2,3)$ and $\tau_{\mL}^{p,q} = (3,2,4,1)$ in $\wt{\Sigma}_{4}$, then although $ s_{\tau_{\mL}^{p,q}}(\alpha(i)+1, -e_{\alpha}(i)) \geq s_{\alpha}(\alpha(i)+1, -e_{\alpha}(i))$ for each $i\in [4]$, we have $s_{\alpha}(4,2) = 2$ but $s_{\tau_{\mL}^{p,q}}(4,2) = 1$.
 \end{rmk}

The following lemma explains why in the $j$-th essential twist there is exactly one ``new'' section when the combined inclusion of all $j'$-th twists with $j' \prec j$ into the $j$-th twist is injective.

\begin{lem}\label{lem:unique_new_section}
    Let $\alpha \in \wt{\Sigma}_k$ and fix $j\in [k]$. Consider $[k]$ to be a poset under the order $\prec$ inherited from the set of essential twists $\{(\alpha(i), e_{\alpha}(i))\}_{i \in [k]}$ and for each $j' \prec j$, let $q_{j',j}\in \ZZ_{\geq 0}$ be as in \eqref{eq:twist_up}. Then
    \begin{equation*}
        \sum_{j' \prec j} (q_{j',j} + 1) = s_{\alpha}(\alpha(j)+1, -e_{\alpha}(j))-1.
    \end{equation*}
\end{lem}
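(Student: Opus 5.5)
The plan is a direct count: split the set defining $s_\alpha(\alpha(j)+1,-e_\alpha(j))$ by residue class modulo $k$, and match each residue class $i\neq j$ with the term $q_{i,j}+1$. Write $t_i=(\alpha(i),e_\alpha(i))$. By definition,
\[
s_\alpha(\alpha(j)+1,-e_\alpha(j))=\#\{n\geq -e_\alpha(j):\ \alpha(n)\leq \alpha(j)\}.
\]
Write $n=i+mk$ with $i\in[k]$ and $m\in\ZZ$. Since $\alpha(n)=\alpha(i)+mk$, the contribution of residue $i$ is the number of integers $m$ with
\[
m_0(i)\leq m\leq \Big\lfloor \tfrac{\alpha(j)-\alpha(i)}{k}\Big\rfloor,\qquad m_0(i)\coloneqq\Big\lceil \tfrac{-e_\alpha(j)-i}{k}\Big\rceil .
\]
Because $0\le i,j<k$, we get $m_0(i)=1$ exactly when $e_\alpha(j)=-j$ and $i<j$, and $m_0(i)=0$ otherwise. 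For $i=j$ the count is exactly $1$, coming from $m=0$. This accounts for the ``$-1$'' in the statement, so it remains to show that each $i\neq j$ contributes $q_{i,j}+1$ if $t_i\prec t_j$ and $0$ otherwise.

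Next I make $q_{t_1,t_2}$ from \eqref{eq:twist_up} explicit. Tracking the relation $kp\sim kq$, the integer $s_{t_1,t_2}$ is $b_2-b_1$ if $b_1\le b_2$, and $b_2-b_1+k$ otherwise; in the latter case one copy of $kq$ is traded for $kp$. This gives
\[
q_{t_1,t_2}=\Big\lfloor\tfrac{a_2-a_1}{k}\Big\rfloor \text{ if } b_1\le b_2,\qquad q_{t_1,t_2}=\Big\lfloor\tfrac{a_2-a_1}{k}\Big\rfloor-1 \text{ if } b_1> b_2,
\]
consistent with \eqref{eq:le_j} and \eqref{eq:ge_j}. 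Since $\alpha(i)\not\equiv\alpha(j)\pmod k$ for $i\neq j$, the case $a_j=a_i+k$ never occurs. Hence $t_i\prec t_j$ is equivalent to $q_{t_i,t_j}\ge 0$ in both cases. Also, for $m_0(i)\in\{0,1\}$, the number of integers $m$ in $[m_0(i),\lfloor(\alpha(j)-\alpha(i))/k\rfloor]$ is $\max\{\lfloor(\alpha(j)-\alpha(i))/k\rfloor-m_0(i)+1,\,0\}$.

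So the whole lemma reduces to the claim that, for $i\neq j$, either $m_0(i)=1$ holds exactly when $e_\alpha(i)>e_\alpha(j)$, or the contribution is $0$ on both sides. This is the step needing care, and I check it by cases on $e_\alpha$.
\begin{enumerate}
\item If $e_\alpha(j)=0$, then $b_j=0\ge b_i$ and $m_0(i)=0$, so the two agree.
\item If $e_\alpha(j)=-j$ and $i<j$, then $b_i\in\{0,-i\}$ is greater than $-j=b_j$, and $m_0(i)=1$, so the two agree.
\item If $e_\alpha(j)=-j$, $i>j$ and $e_\alpha(i)=-i$, then $b_i<b_j$ and $m_0(i)=0$, so the two agree.
\item The remaining case is $e_\alpha(j)=-j$, $i>j$, $e_\alpha(i)=0$. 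Here $b_i>b_j$ but $m_0(i)=0$, so the two genuinely differ.
\end{enumerate}

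In the mismatched case, $e_\alpha(i)=0$ means no earlier entry of the window exceeds $\alpha(i)$; in particular $\alpha(i)>\alpha(j)$. Then $\lfloor(\alpha(j)-\alpha(i))/k\rfloor\le -1$, so the residue-$i$ count, with $m\ge 0$, is $0$. On the other side, $q_{t_i,t_j}\le -2<0$, so $t_i\not\prec t_j$ and the term is absent from the sum. Summing over all $i\neq j$ and adding the $1$ from $i=j$ then gives the identity in the statement. The main obstacle is this case analysis of the sign pattern of $e_\alpha$, in particular recognising that the one mismatched case is harmless because of the defining property of $e_\alpha$.
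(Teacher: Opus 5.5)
Your proof is correct and follows essentially the same route as the paper: you split the count defining $s_\alpha(\alpha(j)+1,-e_\alpha(j))$ by residue class modulo $k$ and match each class $j'\neq j$ with $q_{j',j}+1$, using a case analysis on $e_\alpha(j)$ and on whether $j'<j$. Your version is somewhat more explicit than the paper's, notably the closed formula for $q_{t_1,t_2}$, the equivalence $t_i\prec t_j \iff q_{t_i,t_j}\ge 0$, and the treatment of the case $i>j$, $e_\alpha(i)=0$, which the paper handles by noting that such $i$ cannot satisfy $i\prec j$.
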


\begin{proof}

First, suppose $e_{\alpha}(j) = 0$. Then
\begin{align*}
     s_{\alpha}(\alpha(j)+1,0) &= \#\{n \geq 0: \alpha(n) \leq \alpha(j)\}\\
     &=  \sum_{j'=0}^{k-1} \max\Big\{\Big\lfloor{\frac{\alpha(j)-\alpha(j')}{k}\Big\rfloor}+1,0\Big\}.
\end{align*}
Notice that a $j' \in [k]$ has a positive contribution to the sum above if and only if $j' \prec j$ or $j' = j$. Further, for a $j' \prec j$, the two twists under consideration are $(\alpha(j'),e_{\alpha}(j'))$ and $(\alpha(j),0)$ and so since $e_{\alpha}(j') \leq 0$, we see that $q_{j',j} = \Big\lfloor{\frac{\alpha(j)-\alpha(j')}{k}\Big\rfloor}$. As $j$ itself contributes exactly $1$ to the sum, we see that
\begin{equation}\label{eq:sum_of_qs}
     1+  \sum_{j' \prec j} (q_{j',j} + 1) =  s_{\alpha}(\alpha(j)+1,-e_{\alpha}(j)).
\end{equation}

Next, suppose $e_{\alpha}(j) = -j$. Then,
\begin{align*}
     s_{\alpha}(\alpha(j)+1,j) &= \#\{n \geq j: \alpha(n) \leq \alpha(j).\}\\
     &=\sum_{j'=0}^{j-1} \max\Big\{\Big\lfloor{\frac{\alpha(j)-\alpha(j')}{k}\Big\rfloor},0\Big\} +  \sum_{j'=j}^{k-1} \max\Big\{\Big\lfloor{\frac{\alpha(j)-\alpha(j')}{k}\Big\rfloor}+1,0\Big\}.
\end{align*}
Again, a $j' \in [k]$ has a possible contribution to the sums above only if $j' \prec j$ or $j' = j$. For a $j' \prec j$, the two twists under consideration are $(\alpha(j'),e_{\alpha}(j'))$ and $(\alpha(j),-j)$. If $j' < j$ and $j' \prec j$, then since $e_{\alpha}(j') \geq -j' > -j$, we see that $q_{j',j} = \Big\lfloor{\frac{\alpha(j)-\alpha(j')}{k}\Big\rfloor} - 1$ in this case. Next, if $j' > j$ and $j' \prec j$, then necessarily  $e_{\alpha}(j') = -j'$ which is less than $e_{\alpha}(j) = -j$, so $q_{j',j} = \Big\lfloor{\frac{\alpha(j)-\alpha(j')}{k}\Big\rfloor}$ in this case. Thus, \eqref{eq:sum_of_qs} holds again.
\end{proof}

For $\tau \in \wt{\Sigma}_k$, define
\begin{equation}\label{eq:h_tau}
    h_{\tau}(a,b) \coloneqq s_{\tau}(a,b) - (a - b +\chi_{\tau}) = \#\{n < b: \tau(n) \geq a\}. 
\end{equation}

Observe that $h_{\tau_{\mL}}(a+1,b) = h^1(C,\mL(ap-bq))$. 
We have the following combinatorial identity connecting the sum of $h_{\tau}$ over the essential twists of $\tau$ with the expected codimension $\inv_{k}(\tau)$, which will be useful for regeneration (\cref{thm:regeneration}).

\begin{lem}\label{lem:sum_h1_equals_inv}
Let $\tau \in \wt{\Sigma}_k$. Then
\begin{equation}\label{eq:combin_identity}
   \sum_{j=0}^{k-1} h_{\tau}(\tau(j)+1,-e_{\tau}(j)) = \inv_k(\tau).
\end{equation}

\end{lem}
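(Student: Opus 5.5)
The plan is to count $k$-inversions by choosing a canonical representative in each $\sim_k$-class, and then to match the count, term by term in $j$, with the summands on the left-hand side of \eqref{eq:combin_identity}.

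\textbf{Normal form for $k$-inversions.} Let $(a,b)$ be an inversion of $\tau$. Translating by a multiple of $k$ in both coordinates gives another inversion in the same class. There is exactly one such translate whose second coordinate lies in $[k]=\{0,\dots,k-1\}$. Hence the $\sim_k$-classes of inversions are in bijection with the pairs $(n,j)$ such that $j\in[k]$, $n<j$ and $\tau(n)>\tau(j)$. This gives
\[\inv_k(\tau)=\sum_{j=0}^{k-1}\#\{n<j:\tau(n)>\tau(j)\}.\]
Each summand is finite because $\tau$ has bounded difference: for $n\ll 0$ we have $\tau(n)<\tau(j)$.

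\textbf{Matching the summands.} By \eqref{eq:h_tau},
\[h_\tau(\tau(j)+1,-e_\tau(j))=\#\{n<-e_\tau(j):\tau(n)\geq \tau(j)+1\}=\#\{n<-e_\tau(j):\tau(n)>\tau(j)\}.\]
We split according to \cref{dfn:ess_twists}.
\begin{itemize}
\item If $e_\tau(j)=-j$, the summand is $\#\{n<j:\tau(n)>\tau(j)\}$, which is exactly the $j$-th term above.
\item If $e_\tau(j)=0$, the summand is $\#\{n<0:\tau(n)>\tau(j)\}$. In this case the definition of $e_\tau$ says that no $0\le n<j$ satisfies $\tau(n)>\tau(j)$. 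So adding the indices $0\le n<j$ does not change the count, and the summand again equals $\#\{n<j:\tau(n)>\tau(j)\}$.
\end{itemize}
Summing over $j\in[k]$ then yields \eqref{eq:combin_identity}.

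\textbf{Main point of care.} The only step needing attention is the normal form. We must check that the representative with $b\in[k]$ is unique, which holds because distinct translates differ by nonzero multiples of $k$ in the second coordinate. We must also check that the condition $a<b$ is preserved under translation, which is immediate. Everything else is unwinding definitions.
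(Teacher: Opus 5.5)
Your proof is correct, and it takes a more direct route than the paper. The paper first uses $h_\tau = s_\tau - (a-b+\chi_\tau)$ and the normalization \eqref{eq:normalization} to rewrite the identity as a statement about $\sum_j s_\tau(\tau(j)+1,-e_\tau(j))$. It checks this for sorted $\tau$ by an explicit floor-function computation. It then inducts on $\inv_k(\tau)$, writing $\tau=\tau'\sigma^k_{j_0}$ with $j_0$ the largest descent and tracking how both sides change in the two cases $e_{\tau'}(j_0)\in\{0,-j_0\}$. You instead work directly with the counting formula $h_\tau(a,b)=\#\{n<b:\tau(n)\ge a\}$ from \eqref{eq:h_tau}. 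Choosing the representative of each $\sim_k$-class with second coordinate in $[k]$ gives $\inv_k(\tau)=\sum_{j\in[k]}h_\tau(\tau(j)+1,j)$. The case $e_\tau(j)=0$ is then handled by noting that no $0\le n<j$ has $\tau(n)>\tau(j)$, which is exactly the content of the paper's later \cref{lem:h1_equal}. Your argument is shorter and needs neither the normalization identity nor any induction. It also explains why the identity holds: the $j$-th summand counts the $k$-inversion classes whose larger position is congruent to $j$ modulo $k$. The paper itself invokes this interpretation in the proof of \cref{prop:distribution_properties}(1). The paper's inductive argument mirrors the reduction used in \cref{prop:non-colliding} and is phrased in terms of $s_\tau$, the $h^0$-side quantities that appear in the rank computations of \cref{thm:regeneration}. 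It gains no extra generality over your version.
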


\begin{proof}
Using the definition \eqref{eq:h_tau}  along with \eqref{eq:normalization}, we can rewrite the claimed equality as
\begin{equation}\label{eq:rewrite}
    \sum_{j=0}^{k-1} s_{\tau}(\tau(j)+1, -e_{\tau}(j))= \inv_{k}(\tau) + \frac{k(k+1)}{2} + \sum_{j=0}^{k-1} e_{\tau}(j).
\end{equation}

  We prove \eqref{eq:rewrite} directly when $\tau$ is sorted and then proceed by induction on the number of $k$-inversions in $\tau$ due its ordering.

\textbf{Step 1}: Suppose $\tau$ is sorted. We would like to prove
\[\sum_{i=0}^{k-1}s_{\tau}(\tau(i)+1,0) = \inv_{k}(\tau)+ \frac{k(k+1)}{2}. \]
Note that since $\tau$ is sorted, for each $i \in [k]$,

\begin{equation*}\label{eq:sorted_h0}
    s_{\tau}(\tau(i)+1,0) = \sum_{j=0}^{k-1} \max\left\{\floor[\Big]{\frac{\tau(i)-\tau(j)}{k}}+1,0\right\} = i+1 + \sum_{j=0}^{i} \floor[\Big]{\frac{\tau(i)-\tau(j)}{k}}.
\end{equation*}
Thus, 
\[ \sum_{i=0}^{k-1} s_{\tau}(\tau(i)+1,0) = \frac{k(k+1)}{2} + \sum_{i=0}^{k-1}\sum_{j=0}^{i} \floor[\Big]{\frac{\tau(i)-\tau(j)}{k} } = \frac{k(k+1)}{2} + \inv_{k}(\tau),\]
where in the last step we again used that $\tau$ is sorted.
  
 \textbf{Step 2}:  Now, suppose $\tau$ is unsorted. Take 
\[j_0 = \argmax\{\tau(j'): \tau(j') > \tau(j'+1), j'\in [k-1]\}.\]
Let $\tau' = \tau \sigma^k_{j_0}$. So, $\tau'$ is sorted at $j_0$, $\tau = \tau' \sigma^k_{j_0}$, and $\inv_k(\tau') = \inv_k(\tau)-1.$ Let $L(\tau)$ and $R(\tau)$ denote the left-hand and right-hand side of the claimed equality \eqref{eq:rewrite} respectively. We analyze how $L(\tau')$ and $R(\tau')$ change when we transition from $\tau'$ to $\tau$ in a case-by-case fashion depending on the value of $e_{\tau'}(j_0)$. Notice that $e_{\tau'}(j_0+1) = 0 = e_{\tau}(j_0)$ and $e_{\tau}(j_0+1) = -(j_0+1)$ always.

 \textbf{Case 1}: First, suppose $e_{\tau'}(j_0) = 0$. 

    Since $e_{\tau'}(j_0) = 0$, we know $\tau'(j) < \tau'(j_0)$ for all $j\in [j_0]$. So, 
    \begin{equation*}
        L(\tau)-L(\tau') = s_{\tau}\B(\tau'(j_0)+1,j_0+1\B) - s_{\tau'}\B(\tau'(j_0)+1, 0\B) = -j_0.
    \end{equation*}
    And,
    \begin{equation*}
        R(\tau)-R(\tau') = \inv_{k}(\tau)- \inv_{k}(\tau') - (j_0+1) = 1 - (j_0+1) = -j_0.
    \end{equation*}

\textbf{Case 2}: Next, suppose $e_{\tau'}(j_0) = -j_0$.

In this case,
    \begin{equation*}
        L(\tau)-L(\tau') = s_{\tau}\B(\tau'(j_0)+1,j_0+1\B) - s_{\tau'}\B(\tau'(j_0)+1, j_0\B) = 0.
    \end{equation*}
And,
\begin{equation*}
    R(\tau)-R(\tau') = \inv_{k}(\tau)- \inv_{k}(\tau') + (-(j_0+1) + j_0) = 1 - 1 = 0.
\end{equation*}

    Thus, in both cases
    \[R(\tau)-R(\tau') = L(\tau)-L(\tau')\]
    and the claim now follows by the inductive hypothesis $L(\tau') = R(\tau')$ for $\tau'$.
\end{proof}

\section{Degeneration}\label{sec:degeneration}
As in \cite{larson_global_2025}, we degenerate to a chain $X = E^1 \cup_{p^1} E^2 \cup_{p^2} \dots \cup_{p^{g-1}} E^g$ of $g$ elliptic curves glued together at nodes such that $\mO_{E^i}(kp^{i-1}) \cong \mO_{E^i}(kp^i)$ (\cref{fig:chain}).
\begin{figure}
    \centering
    \begin{tikzpicture}
\draw (0, 0) .. controls (1, -1) and (2, -1) .. (3, 0);
\draw (2, 0) .. controls (3, -1) and (4, -1) .. (5, 0);
\draw (4, 0) .. controls (5, -1) and (6, -1) .. (7, 0);
\filldraw (7.8, -0.5) circle[radius=0.02];
\filldraw (7.5, -0.5) circle[radius=0.02];
\filldraw (7.2, -0.5) circle[radius=0.02];
\draw (8, 0) .. controls (9, -1) and (10, -1) .. (11, 0);
\draw (10, 0) .. controls (11, -1) and (12, -1) .. (13, 0);
\filldraw (0.5, -0.42) circle[radius=0.03];
\filldraw (2.5, -0.42) circle[radius=0.03];
\filldraw (4.5, -0.42) circle[radius=0.03];
\filldraw (10.5, -0.42) circle[radius=0.03];
\filldraw (12.5, -0.42) circle[radius=0.03];
\draw (0.5, -0.75) node{$p^0$};
\draw (2.5, -0.75) node{$p^1$};
\draw (4.5, -0.75) node{$p^2$};
\draw (10.5, -0.8) node{$p^{g - 1}$};
\draw (12.5, -0.8) node{$p^g$};
\draw (1.5, -0.95) node{$E^1$};
\draw (3.5, -0.95) node{$E^2$};
\draw (5.5, -0.95) node{$E^3$};
\draw (9.5, -0.95) node{$E^{g - 1}$};
\draw (11.5, -0.95) node{$E^g$};
\end{tikzpicture}
    \caption{Degeneration $X$}
    \label{fig:chain}
\end{figure}
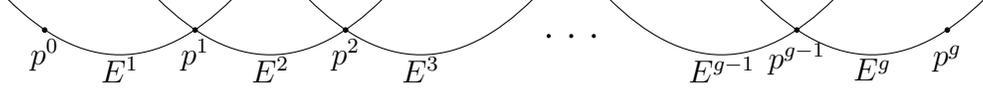
For each $i$ take a degree $k$ map $E^{i} \to \PP^1$ that is totally ramified at $p^{i-1}$ and $p^i$. These maps paste together give us a map $f: X\to P$, where $P$ is a chain of $g$ rational curves glued together at the images of the $p^i$'s. The theory of admissible covers then implies that there exists a map $\mathfrak{f}: \mathcal{X} \to \mathcal{P}$ over $B = \text{Spec } K[[t]],$ where $\mathcal{X}$ is a family of genus $g$ curves whose generic fiber is a smooth $k$-gonal curve totally ramified at two points over the corresponding fiber $\mathcal{P}|_{K((t))} \cong \PP^1_{K((t))}$ and the special fiber of $\mathfrak{f}$ is $f:X\to \mathcal{P}|_{0}\cong P$. We can further assume the total space $\mathcal{X}$ is smooth. Let $\mathfrak{f}_{q}$ and $\mathfrak{f}_{p}$ denote the sections of $\mathcal{X}$ whose generic fibers are the two points of total ramification and their special fibers are $p^0$ and $p^g$ respectively (\cref{fig:family}). For more details about this family of curves, see \cite[Section 2]{larson_global_2025}.

\begin{figure}
    \centering
\tikzset{every picture/.style={line width=0.75pt}} 

\begin{tikzpicture}[x=0.75pt,y=0.75pt,yscale=-0.9,xscale=0.9]

\draw    (112,36) .. controls (152,6) and (473,54) .. (513,24) ;
\draw    (122,168) .. controls (162,138) and (478,184) .. (518,154) ;
\draw    (112,36) .. controls (198,27) and (90,194) .. (130,164) ;
\draw    (513,24) .. controls (484,53) and (478,184) .. (518,154) ;
\draw    (117,221) .. controls (157,191) and (478,239) .. (518,209) ;
\draw    (132,336) .. controls (172,306) and (488,352) .. (528,322) ;
\draw    (117,221) .. controls (153,218) and (99,361) .. (139,331) ;
\draw    (518,209) .. controls (515,240) and (488,352) .. (528,322) ;
\draw    (130,394) -- (533,393) ;
\draw    (336,158) -- (336,206) ;
\draw [shift={(336,208)}, rotate = 270] [color={rgb, 255:red, 0; green, 0; blue, 0 }  ][line width=0.75]    (10.93,-3.29) .. controls (6.95,-1.4) and (3.31,-0.3) .. (0,0) .. controls (3.31,0.3) and (6.95,1.4) .. (10.93,3.29)   ;
\draw    (334,334) -- (334,379) ;
\draw [shift={(334,381)}, rotate = 270] [color={rgb, 255:red, 0; green, 0; blue, 0 }  ][line width=0.75]    (10.93,-3.29) .. controls (6.95,-1.4) and (3.31,-0.3) .. (0,0) .. controls (3.31,0.3) and (6.95,1.4) .. (10.93,3.29)   ;
\draw    (313,33) .. controls (361,45) and (367,59) .. (321,69) ;
\draw    (329,109) .. controls (378,106) and (362,154) .. (329,158) ;
\draw [color={rgb, 255:red, 68; green, 34; blue, 34 }  ,draw opacity=1 ][line width=1.5]  [dash pattern={on 1.69pt off 2.76pt}]  (338,80) -- (338,107) ;
\draw  [fill={rgb, 255:red, 8; green, 0; blue, 0 }  ,fill opacity=1 ] (330.58,36.05) .. controls (331.91,36.59) and (332.56,38.1) .. (332.02,39.43) .. controls (331.49,40.76) and (329.97,41.4) .. (328.64,40.87) .. controls (327.31,40.33) and (326.67,38.82) .. (327.2,37.49) .. controls (327.74,36.16) and (329.25,35.52) .. (330.58,36.05) -- cycle ;
\draw    (319,55) .. controls (367,67) and (365,78) .. (319,88) ;
\draw  [fill={rgb, 255:red, 8; green, 0; blue, 0 }  ,fill opacity=1 ] (344.58,60.05) .. controls (345.91,60.59) and (346.56,62.1) .. (346.02,63.43) .. controls (345.49,64.76) and (343.97,65.4) .. (342.64,64.87) .. controls (341.31,64.33) and (340.67,62.82) .. (341.2,61.49) .. controls (341.74,60.16) and (343.25,59.52) .. (344.58,60.05) -- cycle ;
\draw  [fill={rgb, 255:red, 8; green, 0; blue, 0 }  ,fill opacity=1 ] (359.58,130.05) .. controls (360.91,130.59) and (361.56,132.1) .. (361.02,133.43) .. controls (360.49,134.76) and (358.97,135.4) .. (357.64,134.87) .. controls (356.31,134.33) and (355.67,132.82) .. (356.2,131.49) .. controls (356.74,130.16) and (358.25,129.52) .. (359.58,130.05) -- cycle ;
\draw  [fill={rgb, 255:red, 8; green, 0; blue, 0 }  ,fill opacity=1 ] (341.38,107.56) .. controls (342.71,108.1) and (343.35,109.61) .. (342.82,110.94) .. controls (342.28,112.27) and (340.77,112.91) .. (339.44,112.38) .. controls (338.11,111.84) and (337.46,110.33) .. (338,109) .. controls (338.54,107.67) and (340.05,107.03) .. (341.38,107.56) -- cycle ;
\draw  [fill={rgb, 255:red, 8; green, 0; blue, 0 }  ,fill opacity=1 ] (341.38,79.56) .. controls (342.71,80.1) and (343.35,81.61) .. (342.82,82.94) .. controls (342.28,84.27) and (340.77,84.91) .. (339.44,84.38) .. controls (338.11,83.84) and (337.46,82.33) .. (338,81) .. controls (338.54,79.67) and (340.05,79.03) .. (341.38,79.56) -- cycle ;
\draw    (314,219) .. controls (362,231) and (362,242) .. (316,252) ;
\draw    (324,293) .. controls (373,290) and (357,318) .. (324,322) ;
\draw [color={rgb, 255:red, 68; green, 34; blue, 34 }  ,draw opacity=1 ][line width=1.5]  [dash pattern={on 1.69pt off 2.76pt}]  (334,266) -- (334,293) ;
\draw    (314,238) .. controls (362,250) and (362,261) .. (316,271) ;
\draw    (422,35) .. controls (484,47) and (422,146) .. (436,169) ;
\draw  [fill={rgb, 255:red, 8; green, 0; blue, 0 }  ,fill opacity=1 ] (440.58,40.05) .. controls (441.91,40.59) and (442.56,42.1) .. (442.02,43.43) .. controls (441.49,44.76) and (439.97,45.4) .. (438.64,44.87) .. controls (437.31,44.33) and (436.67,42.82) .. (437.2,41.49) .. controls (437.74,40.16) and (439.25,39.52) .. (440.58,40.05) -- cycle ;
\draw  [fill={rgb, 255:red, 8; green, 0; blue, 0 }  ,fill opacity=1 ] (438.58,132.05) .. controls (439.91,132.59) and (440.56,134.1) .. (440.02,135.43) .. controls (439.49,136.76) and (437.97,137.4) .. (436.64,136.87) .. controls (435.31,136.33) and (434.67,134.82) .. (435.2,133.49) .. controls (435.74,132.16) and (437.25,131.52) .. (438.58,132.05) -- cycle ;
\draw    (441,221) -- (442,333) ;
\draw [color={rgb, 255:red, 208; green, 2; blue, 27 }  ,draw opacity=1 ]   (136,43) .. controls (166.42,20.18) and (338,43.07) .. (436.01,42.13) .. controls (534.02,41.2) and (492.42,50.18) .. (502,43) ;
\draw [color={rgb, 255:red, 10; green, 119; blue, 247 }  ,draw opacity=1 ]   (123,138) .. controls (163,108) and (454,153) .. (494,123) ;
\draw  [fill={rgb, 255:red, 8; green, 0; blue, 0 }  ,fill opacity=1 ] (335.44,390.68) .. controls (336.77,391.22) and (337.41,392.73) .. (336.88,394.06) .. controls (336.34,395.39) and (334.83,396.04) .. (333.5,395.5) .. controls (332.17,394.96) and (331.53,393.45) .. (332.06,392.12) .. controls (332.6,390.79) and (334.11,390.15) .. (335.44,390.68) -- cycle ;
\draw  [fill={rgb, 255:red, 8; green, 0; blue, 0 }  ,fill opacity=1 ] (444.44,389.68) .. controls (445.77,390.22) and (446.41,391.73) .. (445.88,393.06) .. controls (445.34,394.39) and (443.83,395.04) .. (442.5,394.5) .. controls (441.17,393.96) and (440.53,392.45) .. (441.06,391.12) .. controls (441.6,389.79) and (443.11,389.15) .. (444.44,389.68) -- cycle ;

\draw (517,62) node [anchor=north west][inner sep=0.75pt]  [font=\Large] [align=left] {$\displaystyle \mathcal{X}$};
\draw (527,262) node [anchor=north west][inner sep=0.75pt]  [font=\Large] [align=left] {$\displaystyle \mathcal{P}$};
\draw (539,378) node [anchor=north west][inner sep=0.75pt]  [font=\Large] [align=left] {$\displaystyle B$};
\draw (309,39) node [anchor=north west][inner sep=0.75pt]   [align=left] {$\displaystyle p^{g}$};
\draw (331,132) node [anchor=north west][inner sep=0.75pt]   [align=left] {$\displaystyle p^{0}$};
\draw (427,44) node [anchor=north west][inner sep=0.75pt]   [align=left] {$\displaystyle p$};
\draw (419,135) node [anchor=north west][inner sep=0.75pt]   [align=left] {$\displaystyle q$};
\draw (330,400) node [anchor=north west][inner sep=0.75pt]   [align=left] {$\displaystyle 0$};
\draw (437,399) node [anchor=north west][inner sep=0.75pt]   [align=left] {$\displaystyle B^{*}$};
\draw (156,36) node [anchor=north west][inner sep=0.75pt]  [font=\large] [align=left] {$\displaystyle \mathfrak{f}_{p}$};
\draw (157,128) node [anchor=north west][inner sep=0.75pt]  [font=\large] [align=left] {$\displaystyle \mathfrak{f}_{q}$};
\end{tikzpicture}

    \caption{The family $\mathcal{X} \to \mathcal{P} \to B$.}
    \label{fig:family}
\end{figure}

\subsection{Classification of limiting bundles}
When $\mL^*$ is a line bundle of degree $d$ on the generic fiber $\mathcal{X}^*$ of $\mathcal{X}$, we can extend $\mL^*$ to a line bundle on the total space, however the extension is not unique. Instead, we have a unique extension $\mL_{\vec{d}}$ for each degree distribution $\vec{d}=(d^1,\dots,d^g)$ with $\sum d^i = d$. Restricting $\mL_{\vec{d}}$ to the special fiber $X$, we thus obtain a limit $L_{\vec{d}} = \mL_{\vec{d}}|_{X}$ for each degree distribution $\vec{d}$. Moreover, all these limits can be recovered by any one limit via the chip-firing operation, as explained in \cite[Section 3]{larson_global_2025}. We can thus define 
\begin{equation*}
    \Pic^d(X) \coloneqq \left(\bigsqcup_{\vec{d}: \sum d^i = d} \Pic^{\vec{d}}(X)\right)/\sim,
\end{equation*}
where $\sim$ is the equivalence relation generated by the chip-firing relation. An element of $\Pic^d(X)$ is called a \defi{limit line bundle}. Given limits $\{L_{\vec{d}}\}_{\vec{d}:\sum{d^i}=d}$, we refer to the corresponding limit line bundle $L$ by specifying its aspects
\begin{equation*}
    L^i = L_{(0,\dots,\underbrace{\scriptstyle d}_{i\text{th slot}},\dots,0)}|_{E^i}.
\end{equation*}
for each $i = 1,\dots,g$.

\subsection{Transmission loci on the special fiber} 

Now, suppose $\mL^{*} \in W^{\tau}(\mathcal{X}^*, \mathfrak{f}_p|_{\mathcal{X}^*},\mathfrak{f}_{q}|_{\mathcal{X}^*})$. What can we infer about the corresponding limit line bundle $L\in\Pic^d(X)$? In addition to requiring $d = \chi_{\tau} + g$, by upper-semicontinuity, we see that 
\begin{equation}
    h^0(X, L(ap^g-bp^{0})_{\vec{d}}) \geq h^0(\mathcal{X}^*, \mL^*(a\cdot \mathfrak{f}_p|_{\mathcal{X}^*}-b\cdot \mathfrak{f}_q|_{\mathcal{X}^*})) \geq s_{\tau}(a+1, b),
\end{equation}
must hold for all $a,b\in \ZZ$ and all degree distributions $\vec{d}$ such that $\sum d^i = d+a-b$. We call such limit line bundles \defi{$\tau$-positive} and denote the locus of $\tau$-positive bundles in $\Pic^{\chi_{\tau}+g}(X)$ by $W^{\tau}(X, p^g, p^0)$.

Pflueger used the Demazure product to completely describe $W^{\tau}(X, p^g, p^0)$ \cite[Prop.\ 3.9]{pflueger2026transmissionpermutationsdemazureproducts}. In particular, it is shown that $W^{\tau}(X,p^g,p^0)$ can be completely stratified using all \textit{reduced words} for $\tau + \chi_{\tau}$ in $\Sigma_{k}$, which are all the ways to write \[\tau + \chi_{\tau} = \sigma_{m_\ell}^{k}\sigma_{m_{\ell-1}}^{k}\cdots \sigma_{m_1}^{k},\]
with $\ell = \inv_k(\tau)$ and each $\sigma_{m_i}^k$ is one of the transpositions as in \eqref{eq:transposition}.
\begin{dfn}
Let $T = \sigma^k_{m_\ell} \dots \sigma^k_{m_1}$ be a reduced word for $\tau+\chi_\tau$. For every subset $S \subseteq \{1,\dots,g\}$ such that $\#S = \inv_k(\tau)$, let $\phi_S$ be the unique increasing bijection $S \to \{1,\dots,\inv_{k}(\tau)\}.$ For each subset $S$, define the following reduced subscheme of $\Pic^d(X)$:
\begin{equation*}
    W^{T,S}(X,p^g,p^0)  = \{L \in \Pic^d(X): L^i \cong \mO_{E^i}((m_{\phi_S(i)}+i)p^{i-1} +(d-m_{\phi_S(i)}-i)p^{i}) \text{ for all } i \in S\}.
\end{equation*}
Also, define 
\begin{equation*}
        W^{T}(X,p^g,p^0) = \bigcup_{\substack{S \subseteq \{1,\dots,g\}\\ \#S = \inv_k(\tau)}} W^{T,S}(X,p^g,p^0).
\end{equation*}
\end{dfn}
Then \cite[Prop.\ 3.9 \& Lemma\ 4.2]{pflueger2026transmissionpermutationsdemazureproducts} implies that
\begin{equation}\label{eq:stratification}
    W^{\tau}(X,p^g,p^0) = \bigcup_{T: \text{ reduced word for } \tau+\chi_\tau} W^{T}(X,p^g,p^0).
\end{equation}

\begin{rmk}
    The reader familiar with \cite{larson_global_2025} will observe the similarity with the description of splitting loci $W^{\vec{e}}(X)$ on the special fiber:
    \[W^{\vec{e}}(X) = \bigcup_{T: \text{ efficient $k$-regular filling of } \Gamma(\vec{e})} W^T(X).\]
\end{rmk}

\begin{example}
    Consider $\tau = (2,0,1)$ and $g = 3$; notice $\chi_{\tau} = 0$. Since $\inv_{k}(\tau) = 2$, any reduced word for $\tau$ has length 2. In fact, $T = \sigma^3_1 \sigma^3_0$ is the only reduced word for $\tau$. For each subset $S = \{a < b\} \subset \{0,1,2\}$, we have 
    \[W^{T,S}(X,p^3,p^0) = \{L \in \Pic^3(X): L^a \cong \mO_{E^a}(ap^{a-1} + (3-a)p^a) \text{ and }  L^b \cong \mO_{E^b}((1+b)p^{b-1} + (2-b)p^b) \}.\]
\end{example}

In light of $\eqref{eq:stratification}$ and the fact that each reduced word for $\tau$ has length $\inv_{k}(\tau)$, we conclude that $\dim W^{\tau}(X,p^g,p^0) = g-\inv_k(\tau)$, since for any $L\in W^T(X,p^g,p^0)$ exactly $g-\inv_{k}(\tau)$ aspects are unspecified and  can be any general line bundle in $\Pic^d(E^i)$. This observation combined with the fact that $\dim W^{\tau}(C,p,q)$ is an upper-semicontinuous function on the family $\mathcal{X}$ \cite[Thm.\ 5.1]{pflueger2026transmissionpermutationsdemazureproducts}, we see that
\begin{equation}\label{eq:upper_bound}
   \dim W^{\tau}(\mathcal{X}^*, \mathfrak{f}_p|_{\mathcal{X}^*},\mathfrak{f}_{q}|_{\mathcal{X}^*}) \leq g-\inv_{k}(\tau).
\end{equation}

\section{Regeneration}\label{sec:regeneration}
In this section, we prove that every $\tau$-positive limit line bundle on the central fiber of our family $\mathcal{X}$ arises as the limit of a line bundle in $W^{\tau}(\mathcal{X}^*,\mathfrak{f}_p|_{X^*},\mathfrak{f}_{q}|_{X^*})$. We do this by proving a regeneration theorem for non-colliding sections in the essential twists associated to a permutation $\tau \in \wt{\Sigma}_k$. We call a collection of non-colliding sections \[\mu_{j} \in H^0(\mL(\tau(j)p+e_{\tau(j)}q)),\] where $\mL$ is a line bundle of degree $\chi_\tau + g$ on a smooth curve, a \defi{$\tau$-positive linear series}. By \cref{prop:non-colliding}, we know the existence of a $\tau$-positive linear series in $\mL$ implies $\mL \in W^{\tau}(C,p,q)$. 

Now, suppose $L$ is a $\tau$-positive limit line bundle. Over the course of regeneration, we will construct non-colliding sections $\mu^{i}_{j} \in H^0(E^i, L^i(e_{\tau}(j)p^{i-1}+\tau(j)p^i))$ for each $1 \leq i \leq g$ and $j\in [k]$; such a collection of sections is called a \defi{$\tau$-positive limit linear series}. The collection of sections $\{\mu^i_j\}_{j\in [k]}$ for a fixed $i$ is called the \defi{$E^i$-aspect} of the limit linear series. As in the classical case of limit linear series, we will require that the constructed limit linear series is \defi{refined}, i.e., the relation
\begin{equation*}
    \ord_{p^{i}}(\mu^i_j) + \ord_{p^{i}}(\mu^{i+1}_j) = d+e_{\tau}(j) + \tau(j),
\end{equation*}
holds for all $1\leq i < g$ and $j\in [k]$, which will help us prove that the limit linear series smooths out to the general fiber. To define the desired ramification indices for the $\tau$-positive limit linear series, we require truncations, which we next discuss.

\subsection{Truncations}

Given a reduced word $T =\sigma^k_{m_\ell} \dots \sigma^k_{m_1}$ for $\tau + \chi_{\tau}$, for each $t\in [\ell+1]$ the \defi{$t$-th truncation} is the permutation $T^{\leq t} = \sigma^k_{m_t} \dots \sigma^k_{m_1}$, with the convention that $T^{\leq 0}$ is simply the identity permutation. From the definition, we see that
\begin{equation}\label{eq:truncations}
    T^{\leq t}(j) = \sigma_{m_{t}}^{k}(T^{\leq t-1}(j))=
    \begin{cases}
        T^{\leq t-1}(j) + 1 & \text{if } T^{\leq t-1}(j)\equiv m_t \pmod{k}\\
         T^{\leq t-1}(j) - 1 & \text{if } T^{\leq t-1}(j) \equiv m_t +1 \pmod{k}\\
          T^{\leq t-1}(j) & \text{else}.
    \end{cases}
\end{equation}
Since $T^{\leq t-1}$ is also a permutation, there exist unique indices $j_{+}(t),j_{-}(t) \in [k]$ with the property $ T^{\leq t}(j_{\pm}(t)) =  T^{\leq t-1}(j_{\pm}(t)) \pm 1$ and we call $j_{+}(t)$ the \defi{increasing} index and $j_{-}(t)$ the \defi{decreasing} index for $T^{\leq t}$. We will later require the following properties of $j_{\pm}(t)$.

\begin{prop}\label{prop:increasing_decreasing}
Let $T = \sigma^k_{m_\ell} \dots \sigma^k_{m_1}$ be a reduced word for $\tau + \chi_{\tau}$. Let $j_{\pm}$ be the increasing and decreasing indices  for $T^{\leq t_0}$.  Then:
\begin{enumerate}
    \item We have $T^{\leq t_0}(j_{+}) \equiv T^{\leq t_0}(j_{-}) + 1 \pmod{k}$ \text{ and } $T^{\leq t_0}(j_-) < T^{\leq t_0}(j_+)$. 
    \item Further, $T^{\leq t_0}(j_+) = T^{\leq t_0}(j_-)+1$ if and only if $j_{+} < j_{-}$.
    \item Lastly, 
    \begin{equation*}
        T^{\leq t}(j_+) - T^{\leq t}(j_-) \geq T^{\leq t_0}(j_+) - T^{\leq t_0}(j_-)
    \end{equation*}
    for $t_0 \leq t \leq \ell$.
\end{enumerate}
\end{prop}

\begin{proof}
The first part of the claim (1) follows from
\begin{equation*}
    T^{\leq t_0}(j_+) = T^{\leq t_0-1}(j_+) + 1 \equiv m_{t_0} + 1 \equiv T^{\leq t_0-1}(j_{-}) = T^{\leq t_0}(j_{-})+1 \pmod{k}.  
\end{equation*}
Next, we prove the second part of (1). Given a permutation $\alpha \in \Sigma_k$, for $j_1\neq j_2 \in [k]$ define 
\begin{equation}\label{eq:inv}
    \inv_k(\alpha, j_1,j_2) \coloneqq \BB(\floor[\Big]{\frac{\alpha(j_1) - \alpha(j_2)}{k}} + \mathbbm{1}_{j_1 < j_2}\BB) \cdot \mathbbm{1}_{\alpha(j_1) > \alpha(j_2)} + \BB(\floor[\Big]{\frac{\alpha(j_2) - \alpha(j_1)}{k}} + \mathbbm{1}_{j_1 > j_2}\BB) \cdot \mathbbm{1}_{\alpha(j_1) < \alpha(j_2)}.
\end{equation}
The number of $k$-inversion classes $(a,b)$ in $\tau$ such that $\{a,b\} \equiv \{j_1,j_2\} \pmod{k}$ is given by $\inv_{k}(\alpha, j_1,j_2) = \inv_{k}(\alpha, j_2,j_1)$. Now, define
\begin{equation*}
   \delta_t(j_1,j_2)\coloneqq  \inv_k(T^{\leq t},j_1,j_2) - \inv_k(T^{\leq t-1}, j_1,j_2).
\end{equation*}
The quantity $\delta_t(j_1,j_2)$ measures the change in number of $k$-inversions between the pair of residue classes $\{j_1,j_2\}$ when we go from the $T^{\leq t-1}$ to $T^{\leq t}$. Since 
\begin{equation*}
    \abs{(T^{\leq t_0}(j_1) - T^{\leq t_0}(j_2))- (T^{\leq t_0-1}(j_1) - T^{\leq t_0-1}(j_2))} = \begin{cases}
0 & \text{if } j_1 \not \in \{j_{\pm}\} \text{ and }  j_2 \not \in \{j_{\pm}\}\\
1 & \text{exactly one of } j_1 \text{ or } j_2 \in \{j_{\pm}\}\\
2 & \text{ else},
\end{cases}
\end{equation*} 
we have $\delta_{t_0}(j_1,j_2) = 0$ if $j_1,j_2 \not \in \{j_\pm\}$. Moreover, since $T^{\leq t}(j_1) - T^{\leq t}(j_2) \not\equiv 0 \pmod{k}$, we see that $\delta_{t_0}(j_1,j_2) = 0$ even if exactly one of $j_1$ or $j_2$ are in $\{j_{\pm}\}$. Thus, the number of $k$-inversions between every pair of indices $j_1,j_2 \in [k]$ such that $\{j_1,j_2\} \neq \{j_{\pm}\}$ remains the same in going from the $(t-1)$-st truncation to the $t$-th truncation and the number of $k$-inversions for the pair $\{j_{\pm}\}$ can only increase by at most $1$. Therefore, 
\begin{equation*}
    \inv_k(T^{\leq t}) - \inv_k(T^{\leq t-1}) = \sum_{j_1 < j_2 \in [k]} \delta_t(j_1, j_2) = \delta_t(j_{-}(t), j_+(t)) \leq 1,
\end{equation*}
and since the length of $T$ is exactly $\ell = \inv_{k}(\tau)$, it must be that $\delta_{t_0}(j_{-},j_+) = 1$.

Now, suppose (1) fails. Write $T^{\leq t_0}(j_-) = T^{\leq t_0}(j_+)+kq-1$ for some $q > 0$. Then,
\begin{equation*}
    T^{\leq t_{0}-1}(j_-) =  T^{\leq t_0}(j_-) + 1 = T^{\leq t_0}(j_+)+kq =  T^{\leq t_0-1}(j_+)+1+kq.
\end{equation*}
So,
\begin{equation*}
    \delta_{t_0}(j_-,j_+) = (q-1 + \mathbbm{1}_{j_- < j_+})-(q+\mathbbm{1}_{j_- < j_+}) = -1,
\end{equation*}
contradicting $\delta_{t_0}(j_-,j_+) = 1$. Therefore, $T^{\leq t_0}(j_-) < T^{\leq t_0}(j_+)$.

We now prove the forward direction of (2). Using (1), we can now write $T^{\leq t_0}(j_+) = T^{\leq t_0}(j_-)+kq+1$ for some $q \geq 0$. Suppose $q = 0$ but $j_- < j_+$. Then, since $T^{\leq t_0 - 1}(j_+) = T^{\leq t_0-1}(j_-) - 1$, we obtain
\begin{equation*}
     \delta_{t_0}(j_-,j_+) = (0+\mathbbm{1}_{j_- > j_+}) - (0 + \mathbbm{1}_{j_- < j_+}) = -1,
\end{equation*}
a contradiction. Thus, $j_+ < j_-$ if $q = 0$. Conversely, if $j_+ < j_{-}$, then $\delta_{t_0}(j_-, j_+) = 1$ forces $q=0$. This verifies (2).

Finally, for (3), suppose 
\begin{equation*}
    T^{\leq t}(j_+) - T^{\leq t}(j_-) < T^{\leq t_0}(j_+) - T^{\leq t_0}(j_-)
\end{equation*}
for some $t > t_0$ and let $t$ be the smallest such step with this property. The assumption on $t$ implies that $T^{\leq t-1}(j_+) \equiv T^{\leq t-1}(j_-)+1 \pmod{k}$ and since $T^{\leq t}(j_-) \not \equiv  T^{\leq t}(j_+)\pmod{k}$, it must be the case that $j_-(t) = j_+$ and $j_+(t) = j_-$, and
\begin{equation*}
     T^{\leq t}(j_+) - T^{\leq t}(j_-)  = kq-1.
\end{equation*}
We then see that $\delta_t(j_-(t),j_+(t)) = -1$, again a contradiction, and so (3) must be true. 
\end{proof}

\subsection{Ramification indices}

If we denote by $\sigma^k_{\bullet}$ the identity permutation, then every element of $W^{\tau}(X,p^g,p^0)$ is associated to a word $T=\sigma^k_{m_g}\dots \sigma^k_{m_1}$, $m_i \in [k] \cup \{\bullet\}.$ Given such a word $T$ for $\tau + \chi_\tau$ with degree $d = \chi_{\tau}+g$, define 
\begin{equation}
    \na^{i}_{j} = T^{\leq i}(j) + e_{\tau}(j) + i
\end{equation}
for each $i \in [g+1]$ and $j\in [k]$.
And define
\begin{equation}
    \nb^{i}_{j} = d + \tau(j) + e_{\tau}(j) - \na^{i}_{j}
\end{equation}
 for each $1 \leq i \leq g$ and $j\in [k]$.

We will use $\na^{i}_{j}$ to prescribe the order of vanishing of the section $\mu^{i+1}_j$ at $p^{i}$ on $E^{i+1}$ and $\nb^{i}_{j}$ to prescribe the order of vanishing of the section $\mu^{i}_j$ at $p^{i}$ on $E^{i}$.

Since
\begin{equation*}
   T^{\leq i}(j) + e_{\tau}(j) + i \geq (j-i) + (-j) + i = 0,
\end{equation*}
and
\begin{align*}
      d+\tau(j) - T^{\leq i}(j) - i &=  d+(T^{\leq g}(j) + g-d)- T^{\leq i}(j) - i \\
      & = g + (T^{\leq g}(j)-  T^{\leq i}(j)) - i\\
      & \geq g - (g-i) - i = 0,\\
\end{align*}
we know $\na^i_j, \nb^i_j \geq 0$ for all $i,j$.
Critically, this choice of ramification ensures the constructed sections are refined at all nodes and non-colliding at $p^{i-1}$ on $E^{i}$ (\cref{dfn:non-colliding}). We now verify the latter property.

\begin{lem}\label{lem:verify_noncolliding}
If there exists sections $\mu^{i}_{j} \in H^0(E^i, L^i(e_{\tau}(j)p^{i-1} + \tau(j)p^i)$ with order of vanishing exactly $\na^i_{j}$ at $p^{i-1}$, then they are non-colliding at $p^{i-1}$.
\end{lem}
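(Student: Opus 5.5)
The plan is to compute the residue mod $k$ of $v_j+s_{t_j,t}$ directly and observe that it depends on $j$ only through the value of a truncation of the reduced word at $j$. Distinctness mod $k$ of the values of an affine permutation on $[k]$ then gives non-collision at once.

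Fix $i$, and work on $E^i$ with $p^{i-1}$ in the role of $q$ and $p^i$ in the role of $p$. This is legitimate since $\mO_{E^i}(kp^{i-1})\cong \mO_{E^i}(kp^i)$, so \cref{dfn:order} and \cref{dfn:non-colliding} apply verbatim. The sections $\mu^i_j$ live in the twists $t_j=(\tau(j),e_\tau(j))\in T_k$, and $v_j=\ord_{p^{i-1}}(\mu^i_j)$ is the prescribed index from the statement, namely $T^{\leq i'}(j)+e_\tau(j)+i'$ for the appropriate $i'$ (either $i$ or $i-1$ depending on the indexing convention; the argument is identical). Now let $t=(a,b)\in T_k$ with $t_{j_1}\prec t$ and $t_{j_2}\prec t$.

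The first step is to pin down $s_{t_j,t}$ modulo $k$ from \eqref{eq:twist_up}. Comparing the $q$-coefficients in $\mO(a_1p+b_1q)\otimes\mO((q'k+r)p+sq)\cong\mO(a_2p+b_2q)$, where only $k(p-q)\sim 0$ may be used, gives $s_{t_j,t}\equiv b-e_\tau(j)\pmod k$; the normalization $0\le s<k$ makes $s$ the unique such residue. I expect this identification to be the only place needing care: one must check that the uniqueness in \cref{dfn:order} really forces the $q$-exponent to be this residue, rather than allowing trading multiples of $k$ between $p$ and $q$ in a way that shifts $s$. Because of the normalization $s\in[0,k)$, any such trade changes $s$ by a multiple of $k$ and so is excluded.

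Given this, we get
\[
v_j+s_{t_j,t}\equiv T^{\leq i'}(j)+e_\tau(j)+i'+b-e_\tau(j)= T^{\leq i'}(j)+i'+b \pmod k .
\]
The $e_\tau(j)$ terms cancel, and $i'+b$ is independent of $j$. So the non-colliding condition \eqref{eq:non-col} for the pair $j_1\neq j_2$ reduces to $T^{\leq i'}(j_1)\not\equiv T^{\leq i'}(j_2)\pmod k$.

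This last condition holds because $T^{\leq i'}$ is a product of the $\sigma^k_m$ and hence lies in $\wt\Sigma_k$. A bijection $\alpha$ of $\ZZ$ with $\alpha(n+k)=\alpha(n)+k$ induces a bijection on $\ZZ/k$, so its values on $[k]$ are pairwise distinct mod $k$. Hence every pair $\mu^i_{j_1},\mu^i_{j_2}$ is non-colliding at $p^{i-1}$ for every admissible common upper bound $t$, which proves the lemma.
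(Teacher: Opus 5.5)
Your proposal is correct and is essentially the paper's proof: you identify $s_{t_\ell,t}\equiv b-e_\tau(j_\ell)\pmod{k}$, the $e_\tau(j_\ell)$ terms cancel to give $v_{j_\ell}+s_{t_\ell,t}\equiv T^{\leq i}(j_\ell)+i+b\pmod{k}$, and you conclude because $T^{\leq i}\in\wt{\Sigma}_k$ takes pairwise distinct residues on $[k]$. Your remark that the argument does not care whether the vanishing order is $\na^{i}_j$ or $\na^{i-1}_j$ is apt, since \cref{lem:existence_of_lls} actually invokes the lemma with vanishing order $\na^{i-1}_j$ at $p^{i-1}$.
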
 

\begin{proof}
   Fix $j_1\neq j_2 \in [k]$. Suppose $t = (a,b) \in T_k$ is a pair such that
   \begin{equation*}
       t_1 \coloneqq (\tau(j_1),e_{\tau}(j_1)) \prec t \text{ and }   t_2 \coloneqq (\tau(j_2),e_{\tau}(j_2)) \prec t.
   \end{equation*}
   Let $s_{t_\ell,t}$ for $\ell = 1,2$ be as in \cref{dfn:order}. Then, since $s_{t_\ell,t} \equiv b-e_{\tau}(j_\ell)\pmod{k}$, we have
   \begin{equation*}
       \na^{i}_{j_{\ell}} + s_{t_\ell,t} \equiv T^{\leq i}(j_\ell) + i + b  \pmod{k},
   \end{equation*}
   and since $T^{\leq i}(j_1) \not \equiv T^{\leq i}(j_2) \pmod{k}$, we see that 
    \begin{equation*}
       \na^{i}_{j_{1}} + s_{t_1,t} \not\equiv \na^{i}_{j_{2}} + s_{t_2,t}  \pmod{k},
   \end{equation*}
   verifying the twists are non-colliding.
\end{proof}

The following lemma proves that we can rigidify all general bundles in $W^{T}(X,p^g,p^0)$ via $\tau$-positive limit linear series with the desired ramification.

\begin{lem}\label{lem:existence_of_lls}
Let $\tau \in \wt{\Sigma}_k$. Let $T = \sigma^k_{m_g}\dots \sigma^k_{m_1}$ be a reduced word for $\tau+\chi_{\tau}$ and let $\na^{i}_{j}, \nb^{i}_j$ be as defined above. A general limit line bundle in $W^{T}(X,p^g,p^0)$ possesses a unique $\tau$-positive limit linear series whose ramification indices at $p^i$ are $\na^{i}_j$ on $E^{i+1}$ and $\nb^{i}_j$ on $E^i$. This $\tau$-positive limit linear series is refined at all nodes and non-colliding at $p^{i-1}$ for the $E^i$-aspect.
\end{lem}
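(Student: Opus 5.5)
The argument works one elliptic component $E^i$ at a time. On $E^i$ the twisted bundle $L^i(e_\tau(j)p^{i-1}+\tau(j)p^i)$ has degree
\[
D_j \coloneqq d+e_\tau(j)+\tau(j) = \na^{i-1}_j+\nb^{i}_j .
\]
A section with vanishing orders exactly $\na^{i-1}_j$ at $p^{i-1}$ and $\nb^i_j$ at $p^i$ therefore has divisor exactly $\na^{i-1}_j p^{i-1}+\nb^i_j p^i$. On an elliptic curve, such a section exists if and only if
\[
L^i(e_\tau(j)p^{i-1}+\tau(j)p^i)\cong \mO_{E^i}(\na^{i-1}_jp^{i-1}+\nb^i_jp^i),
\]
and when it exists it is unique up to scaling, since a nonzero section of a degree $D_j$ bundle vanishing to total order $D_j$ is determined by its divisor. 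Uniqueness of the limit linear series therefore follows immediately once existence is settled; the index $i-1=0$ is covered because $\na^0_j=\tau$-independent data, namely $T^{\leq 0}(j)+e_\tau(j)=j+e_\tau(j)\ge 0$. Refinedness also follows at once: $\ord_{p^i}\mu^i_j+\ord_{p^i}\mu^{i+1}_j=\nb^i_j+\na^i_j=D_j$ by the definition of $\nb^i_j$. Non-colliding at $p^{i-1}$ is exactly \cref{lem:verify_noncolliding}. The remaining work is to check that the displayed isomorphism holds for all $j$ on every $E^i$, for general $L$ in $W^{T}(X,p^g,p^0)$.

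Rewriting the isomorphism, it says
\[
L^i\cong \mO_{E^i}\bigl((\na^{i-1}_j-e_\tau(j))p^{i-1}+(\nb^i_j-\tau(j))p^i\bigr).
\]
Next I compute $\na^{i-1}_j-e_\tau(j)=T^{\leq i-1}(j)+i-1$ and $\nb^i_j-\tau(j)=d-T^{\leq i}(j)-i$. Hence the requirement is
\[
L^i\cong\mO\bigl((T^{\leq i-1}(j)+i-1)p^{i-1}+(d-T^{\leq i}(j)-i)p^i\bigr)\quad\text{for all } j\in[k].
\]
I will treat two cases according to $m_i$.

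If $m_i=\bullet$, so that $i\notin S$, then $T^{\leq i}=T^{\leq i-1}$ and the requirement becomes $L^i\cong \mO(d p^i)\otimes\mO((T^{\leq i-1}(j)+i-1)(p^{i-1}-p^i))$. Since $k(p^{i-1}-p^i)\sim 0$ and $k$ is exactly the order of this torsion class, the bundle on the right runs over $k$ distinct bundles as $j$ varies. A single $L^i$ cannot satisfy all of them. So the sections cannot all have divisors supported on $\{p^{i-1},p^i\}$, and for unspecified aspects the plan must change.

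The resolution is to have the sections of $E^i$ pass through a nonmarked point. For $i\notin S$ I take $\mu^i_j$ with vanishing orders $\na^{i-1}_j$ at $p^{i-1}$ and $\nb^i_j=D_j-\na^{i-1}_j-1$ at $p^i$; indeed $\nb^i_j=\nb^{i-1}\text{-style}$ with $T^{\leq i}=T^{\leq i-1}$ gives exactly sum $D_j-1$. The space of sections of a degree $D_j$ bundle on an elliptic curve with these prescribed vanishings has dimension $1$ as long as $L^i$ is not one of the $k$ special bundles (generality of $L^i$). So uniqueness up to scaling still holds, and refinedness still reads $\nb^i_j+\na^i_j=D_j$.

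If $m_i\neq\bullet$, so that $i\in S$, then the computation uses \eqref{eq:truncations}. For $j\neq j_\pm(i)$ we have $T^{\leq i}(j)=T^{\leq i-1}(j)$, while $T^{\leq i}(j_\pm)=T^{\leq i-1}(j_\pm)\pm1$. Here the defining condition $L^i\cong\mO((m_i+i)p^{i-1}+(d-m_i-i)p^i)$ from the definition of $W^{T,S}$ is what must be matched with the index-by-index requirement. Since $T^{\leq i-1}(j_+)\equiv m_i$, the index $j_+$ gives equality on the nose with sum $D_j$: the section is exact, supported on the two nodes, and unique. For the other $j$ the total is $D_j-1$ or forced by the same genericity dimension count, unless $j=j_-$, where $T^{\leq i-1}(j_-)\equiv m_i+1$ also lands on the specified class. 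The indices are arranged precisely so that only $j_+$ or $j_-$ meet the special class, and exactness there is forced.

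The main obstacle is this bookkeeping of vanishing sums: showing that in every case the prescribed orders give exactly one section up to scalar. That comes down to the residue computations $T^{\leq i-1}(j)\equiv m_i$ or $m_i+1$ and the fact that $p^{i-1}-p^i$ has exact order $k$, plus the genericity of the unspecified aspects $L^i$.
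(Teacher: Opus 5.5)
\textbf{There is a genuine gap.} It sits exactly where the paper does its real work: the increasing index $j_+=j_+(i)$ on a component $E^i$ with $m_i\neq\bullet$.

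Your opening identity $\na^{i-1}_j+\nb^i_j=d+e_\tau(j)+\tau(j)$ is false. From the definitions,
\[
\na^{i-1}_j+\nb^i_j=d+e_\tau(j)+\tau(j)-1+\bigl(T^{\leq i-1}(j)-T^{\leq i}(j)\bigr).
\]
So the deficit is $1$ for $j\notin\{j_\pm(i)\}$ (and for every $j$ when $m_i=\bullet$), it is $0$ only for $j=j_-$, and it is $2$ for $j=j_+$. The identity with no deficit is the refinedness relation $\na^i_j+\nb^i_j=d+e_\tau(j)+\tau(j)$, which has equal superscripts.

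You then swap the roles of $j_\pm$. It is $j_-$, not $j_+$, for which $M^i_j:=L^i(D^i_j)(-\na^{i-1}_jp^{i-1}-\nb^i_jp^i)$ is trivial, so that the section is supported on the nodes and unique up to scalar. For $j_+$ one gets $M^i_{j_+}\cong\mO_{E^i}(p^{i-1}+p^i)$, whose sections form a $2$-dimensional space. The sections with exactly the prescribed orders are the complement of a line, so they are not unique up to scaling, and your claim that ``exactness there is forced'' gives no uniqueness. Your $m_i=\bullet$ case, after your self-correction, does match the paper's Step~1, and refinedness and the non-colliding property are handled correctly.

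\textbf{The missing idea.} Uniqueness of $\mu^i_{j_+}$ is only meant modulo the images of sections from lower essential twists, as in the spaces $\PP\mathcal{Q}^i_j$ of \cref{thm:regeneration}. You must then show that the offending line lies in that image. This line is the section vanishing to orders $\na^{i-1}_{j_+}+1$ at $p^{i-1}$ and $\nb^i_{j_+}+1$ at $p^i$. It equals $\mu^i_{j_-}$ times the constant section of $\mO_{E^i}(D)$, where
\[
D=(e_\tau(j_+)-e_\tau(j_-)+qk)p^{i-1}+(\tau(j_+)-\tau(j_-)-qk)p^i, \qquad T^{\leq i}(j_+)=T^{\leq i}(j_-)+qk+1.
\]
Effectivity of $D$ is genuine combinatorial input from \cref{prop:increasing_decreasing}:
\begin{itemize}
\item part (1) gives $q\geq 0$;
\item part (3) gives $\tau(j_+)-\tau(j_-)-qk\geq 1$;
\item if $q=0$, part (2) gives $j_+<j_-$, whence $e_\tau(j_-)=-j_-<e_\tau(j_+)$.
\end{itemize}
None of this appears in your proposal.

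\textbf{A smaller point.} For $i\in S$ and $j\notin\{j_\pm\}$ the aspect $L^i$ is fixed, so genericity is unavailable. Exactness must come from the residue argument: $M^i_j\cong\mO_{E^i}(p^i)$ or $\mO_{E^i}(p^{i-1})$ would force $T^{\leq i-1}(j)\equiv T^{\leq i-1}(j_-)$ or $T^{\leq i-1}(j)\equiv T^{\leq i-1}(j_+)\pmod k$. You only gesture at this.
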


\begin{proof}
If such a limit linear series exists, it is refined at each node since
\begin{equation*}
    \na^{i}_{j} + \nb^{i}_{j} = d + \tau(j) + e_{\tau}(j),
\end{equation*}
and non-colliding at $p^{i-1}$ for the $E^{i}$-aspect by \cref{lem:verify_noncolliding}.

Let $D^i_j \coloneqq e_{\tau}(j)p^{i-1} + \tau(j)p^i$ be the essential twists corresponding to $\tau$ on $E^i$ and let
\begin{equation*}
    M^i_{j} \coloneqq  L^{i}(D^i_j)(-\na^{i-1}_j p^{i-1} -\nb^{i}_j p^i).
\end{equation*}
\textbf{Step 1}: Fix $i$. If $m_i = \bullet$, then $L^i$ is a general line bundle of degree $d$ on $E^i$. We have $T^{\leq i}(j) = T^{\leq i-1}(j)$ for each $j$, so
\begin{equation}\label{eq:genl_bundle}
    \na^{i-1}_j + \nb^{i}_j = T^{\leq i-1}(j)+e_{\tau}(j)+i-1 + (d+\tau(j)  -T^{\leq i}(j)-i)=d+e_{\tau}(j)+\tau(j)-1.
\end{equation}
Hence, $M^i_j$ is a general degree 1 line bundle on $E^i$, and so there exists a unique section (up to multiplication by constants) $\mu^i_{j}\in H^0(E^i, L^i(D^i_j))$ that vanishes exactly to orders $\na^{i-1}_j$ at $p^{i-1}$ and $\nb^{i}_j$ at $p^i$, for each $j \in [k]$.

\textbf{Step 2}: Now, suppose $m_i \neq \bullet$. Since $m_i \equiv T^{\leq i}(j_-) \pmod{k}$ (see \eqref{eq:truncations}), we know by definition of $W^T(X,p^g,p^0)$ that
\begin{align*}
    L^i &\cong \mO_{E^i}\BB(\B(T^{\leq i}(j_-) + i\B)p^{i-1} + \B(d-T^{\leq i}(j_-)-i\B)p^i\BB)\\
    &= \mO_{E^i}\BB(\B(\na^{i}_{j_-} - e_{\tau}(j_{-})\B)p^{i-1} + \B(\nb^{i}_{j_-}-\tau(j_{-})\B)p^i\BB).\numberthis \label{eq:L_i}
\end{align*}
Also, observe that 
\begin{equation*}
    \na^{i}_{j_-} = T^{\leq i}(j_-)+e_{\tau}(j_{-})+i = T^{\leq i-1}(j_-)-1+e_{\tau}(j_{-})+i = \na^{i-1}_{j_-},
\end{equation*}
so $\nb^{i}_{j_-} = \nb^{i-1}_{j_-}$ as well.

\textbf{Case 1}: Suppose $j\not\in \{j_\pm\}$. As in \eqref{eq:genl_bundle}, we see $M^i_j$ 
is a degree 1 line bundle on $E^i$; suppose it is isomorphic to $\mO_{E^i}(p^i)$. Then we must have 
\begin{equation*}
    \na^{i-1}_{j_-} - e_{\tau}(j_-) \equiv \na^{i-1}_j - e_{\tau}(j)  \pmod{k},
\end{equation*}
which is equivalent to $T^{\leq i-1}(j_-) \equiv T^{\leq i-1}(j) \pmod{k},$ and is not possible. By a similar argument, we see $M^i_j$ cannot be isomorphic to $\mO_{E^{i}}(p^{i-1})$. Thus, there is a unique section $\mu^i_j \in H^0(E^i, L^i(D^i_j))$ with the desired vanishing.

\textbf{Case 2}: Suppose $j=j_-$. Since \eqref{eq:L_i} implies $M^i_{j_{-}} \cong \mO_{E^i}$, we see there is a unique section $\mu^i_{j_{-}}\in H^0(E^i, L^i(D^i_{j_-}))$ with the desired vanishing. 

\textbf{Case 3}: Finally, suppose $j = j_+$. In this case, 
\begin{align*}
    \na^{i-1}_{j_+} + \nb^{i}_{j_+} & = (T^{\leq i-1}(j_+)+ e_{\tau}(j_+)+i-1) + (d+\tau(j_+)-T^{\leq i}(j_+)-i)\\
    & = \B((T^{\leq i}(j_+)-1)+e_{\tau}(j_+)+i-1\B) + (d + \tau(j_+) - T^{\leq i}(j_+)-i) \\
    &= d+\tau(j_+)+e_{\tau}(j_+)-2.
\end{align*}
So, since
\begin{equation*}
    \B(\na^{i-1}_{j_-} - e_{\tau}(j_-)\B)+ \B(e_{\tau}(j_+) - \na^{i-1}_{j_+}\B) = T^{\leq i-1}(j_-)-T^{\leq i-1}(j_+) \equiv 1 \pmod{k},
\end{equation*}
 we see $M^i_{j_+} \cong  \mO_{E^i}(p^{i-1} + p^{i})$. Thus, there does exist a section $\mu^i_{j_+} \in H^0(E^i, L^i(D^i_{j_+}))$ with at least the desired vanishing. To show that there exists a unique section with the exact vanishing, it suffices to show that the 1-dimensional space of sections
 \begin{equation*}
     H^0(\mO_{E^i}) \subset H^0(\mO_{E^i}(p^{i-1} + p^{i}))= H^0(E^i, M^i_{j_+}) \subset H^0(E^i,L^i(D^i_{j_+}))
 \end{equation*}
vanishing to order $\na^{i-1}_{j_+}+1$ at $p^{i-1}$ and $\nb^{i}_{j_+}$+1 at $p^i$ is contained in the image of the multiplication map
 \begin{equation*}
     H^0(E^i, L^i(D^i_{j_-}))\otimes H^0(E^i, \mO_{E^i}(D^i_{j_+}-D^i_{j_-})) \to H^0(E^i, L^i(D^i_{j_+})).
 \end{equation*}
 Indeed, since $T^{\leq i}(j_+) = T^{\leq i}(j_-) + qk + 1$ for some $q \geq 0$ by \cref{prop:increasing_decreasing}(1), we can write
\begin{align*}
    \nb^{ i}_{j_+} + 1 & = d + \tau(j_+) - T^{\leq i}(j_+)-i+1\\
    & = d + \tau(j_+) - (T^{\leq i}(j_-) + qk + 1) - i + 1\\
    & = \nb^{i}_{j_-} + \tau(j_+) - \tau(j_-) - qk,
\end{align*}
and by \cref{prop:increasing_decreasing}(3),
\begin{equation}\label{eq:ge}
    \tau(j_+) - \tau(j_-) -qk\geq 1.
\end{equation}

Also, 
\begin{equation*}
     \na^{i-1}_{j_+} + 1 =  \na^{i-1}_{j_-}+ e_{\tau}(j_+) - e_{\tau}(j_-) + qk.
\end{equation*}
If $q = 0$, by \cref{prop:increasing_decreasing}(2) we have $j_+ < j_-$ and so \eqref{eq:ge} implies that $e_{\tau}(j_-) = -j_-$, ensuring that
\begin{equation}\label{eq:ge_2}
    e_{\tau}(j_+) - e_{\tau}(j_-) > 0.
\end{equation}
Together \eqref{eq:ge} and \eqref{eq:ge_2} imply that 
\begin{equation*}
    D \coloneqq (e_{\tau}(j_+) - e_{\tau}(j_-)+qk)p^{i-1} + (\tau(j_+)-\tau(j_-) - qk)p^{i}
\end{equation*}
is an effective divisor and so the constant global section of $\mO_{E_i}(D) \cong \mO_{E_i}(D^{i}_{j_+}-D^{i}_{j_-})$ multiplied with $\mu^i_{j_-}$ gives us the section of $H^0(L^i(D^i_{j_{+}}))$ vanishing to order $\na^{i-1}_{j_+}+1$ at $p^{i-1}$ and $\nb^{i}_{j_+}$+1 at $p^i$.
\end{proof}

\subsection{Set-theoretic regeneration}

We are now ready to prove the regeneration theorem for refined and non-colliding $\tau$-positive limit linear series.

\begin{thm}\label{thm:regeneration}
  Let $\tau \in \wt{\Sigma}_{k}$. Suppose $\na^i_j$ and $\nb^i_j$ are the ramification indices at the $p^i$'s for a refined $\tau$-positive limit linear series that is non-colliding at $p^{i-1}$ for the $E^i$-aspect. Let $\mathcal{X}\to \mathcal{P} \to B$ be the family of curves as in \cref{sec:degeneration}. There is a quasi-projective scheme $\wt{W}$ over $B$ whose generic fiber is contained in the space of $\tau$-positive linear series on $\mathcal{X}^*$ and whose special fiber is the space of $\tau$-positive limit linear series on $X$ with ramification specified by $\na^i_j$ and $\nb^i_j$. Every component of $\wt{W}$ has dimension $\geq \dim \Pic(\mathcal{X}/B) - \inv_k(\tau)$.
\end{thm}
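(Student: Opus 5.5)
We follow the Eisenbud–Harris construction of the limit linear series moduli scheme as adapted in \cite[Section 5]{larson_global_2025}, replacing ordinary linear series by tuples of sections in the essential twists. After a finite base change and blowing up the nodes we may assume $\mathcal{X}$ has smooth total space. Over $B$ we consider the relative Picard scheme $\mathcal{P}ic = \Pic^{d}(\mathcal{X}/B)$, with $d=\chi_\tau+g$. Its special fiber is glued from the $\Pic^{\vec d}(X)$ using chip-firing, which matches the limit line bundles of \cref{sec:degeneration}. For each $i\in\{1,\dots,g\}$ we take the twist $\mathcal{L}_i$ of the universal bundle whose restriction to the special fiber has full degree on $E^i$, that is, the bundle whose limit aspect is $L^i$.

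For each $j\in[k]$ and each $i$, we form the vector bundle
\[
\pi_*\big(\mathcal{L}_i(\tau(j)\mathfrak{f}_p+e_\tau(j)\mathfrak{f}_q)\big)
\]
on a neighbourhood, made locally free by twisting by a large multiple of $\mathfrak{f}_p$ and then imposing vanishing conditions. Inside the product over $i$ of projectivizations of these bundles we impose three sets of conditions on the tuple $(\mu^i_j)$. First, $\mu^i_j$ vanishes to order at least $\na^{i-1}_j$ at $p^{i-1}$ and at least $\nb^i_j$ at $p^i$. Second, the $\mu^i_j$ are compatible under the natural maps $\mathcal{L}_i\to\mathcal{L}_{i+1}$ given by multiplication by the local equation $t$ of the components. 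This is the standard Eisenbud–Harris gluing: on the generic fiber all the $\mu^i_j$ for fixed $j$ coincide up to scalar, giving one section $\mu_j$. Third, we impose the non-colliding condition. It is an open condition, and by \cref{lem:verify_noncolliding} it holds automatically on the special fiber given the exact vanishing orders. We therefore simply restrict to the open locus where vanishing orders are exact and the sections are non-colliding. We call the resulting scheme $\wt W$. Its generic fiber parameterizes $\tau$-positive linear series, which lie over $W^\tau$ by \cref{prop:non-colliding}. Its special fiber is, by refinedness, the space of $\tau$-positive limit linear series with the prescribed ramification.

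The dimension bound is a determinantal count. For fixed $j$ on the generic fiber, choosing $\mu_j$ in $H^0(\mL(\tau(j)p+e_\tau(j)q))$ is locally cut out inside a projective bundle of rank $\chi(\mL(\tau(j)p+e_\tau(j)q))$ over $\Pic$. The key point is that we do not choose $\mu_j$ freely: we choose it modulo the sections already generated by the $\mu_{j'}$ with $j'\prec j$. By \cref{lem:unique_new_section}, these earlier sections account for exactly $s_\tau(\tau(j)+1,-e_\tau(j))-1$ dimensions. So for fixed $j$ the new section is one point of a space that we realize as the degeneracy locus of a map between bundles of ranks differing by $\chi-(s-1)$. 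The expected codimension of the condition that this degeneracy locus be nonempty at a given line bundle is
\[
s_\tau(\tau(j)+1,-e_\tau(j))-\chi(\mL(\tau(j)p+e_\tau(j)q)) = h_\tau(\tau(j)+1,-e_\tau(j)).
\]
Summing over $j$ and applying \cref{lem:sum_h1_equals_inv} gives total expected codimension $\inv_k(\tau)$. Every component of such a locally determinantal scheme (Eisenbud–Harris style) has dimension at least $\dim\Pic(\mathcal{X}/B)-\inv_k(\tau)$.

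To carry this out uniformly over $B$, we express the whole $\wt W$ locally as the zero locus of a section of a bundle on a smooth ambient space. The ambient space is built, as in the Eisenbud–Harris proof, by recording the coefficients of the sections along the chain. The number of equations has to be compared with the ambient dimension, and that comparison should reduce to the same identity \cref{lem:sum_h1_equals_inv}. The refinedness relation $\na^i_j+\nb^i_j=d+\tau(j)+e_\tau(j)$ is what makes the count match on each $E^i$.

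The main obstacle is this uniform count over $B$, and specifically showing that the special-fiber conditions impose no more equations than the generic ones. We would attempt it by working with $\mathcal{L}_i$ twisted to large degree. In that regime all pushforwards are locally free of rank $\chi$, and the vanishing conditions at $p^{i-1}$ and $p^i$ are exactly $\na^{i-1}_j+\nb^i_j$ linear conditions, each of the expected form. The quotient by the sections coming from $j'\prec j$ then has to be handled inductively along the order $\prec$.
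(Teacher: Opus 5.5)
Your proposal has a genuine gap: the dimension bound, which is the whole content of the theorem, is never established. Your per-$j$ heuristic is the right local ingredient: $N$ vanishing conditions on a projective bundle of relative dimension $N-h_\tau(\tau(j)+1,-e_\tau(j))$, summed via \cref{lem:sum_h1_equals_inv}. But it is a generic-fiber count, and you explicitly defer the uniform count over $B$ as ``the main obstacle.'' The generic fiber alone is useless for regeneration. What is needed is that every component of $\wt{W}$ through points of the special fiber obeys the bound, so that the locus over $W^T(X,p^g,p^0)$ cannot be a component by itself and must lie in one dominating $B$. Even on the generic fiber, the locus is not a single degeneracy locus over $\Pic$, because the map for twist $j$ depends on the previously chosen $\mu_{j'}$, $j'\prec j$. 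One must build a tower over the spaces of earlier choices. In it, the new-section bundle for $j$ is the cokernel of the multiplication map, over the open set where that map has rank exactly $s_\tau(\tau(j)+1,-e_\tau(j))-1$ (\cref{lem:unique_new_section}). This is the paper's $\mathcal{Q}^i_j$ over $H^i_j$.

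There is also a mismatch between the space you build and the space you count. Your $\wt{W}$ parameterizes actual sections $\mu^i_j$, but your count is for classes modulo earlier sections. For actual sections the fibers over $\Pic$ are positive-dimensional. On the special fiber, Case~3 of \cref{lem:existence_of_lls} shows that on each $E^i$ with $m_i\neq\bullet$, the section $\mu^i_{j_+(i)}$ is unique only modulo the image of $\mu^i_{j_-(i)}$. So over a general point of $W^T(X,p^g,p^0)$ your space has fibers of dimension $\inv_k(\tau)$, and that piece has dimension $g$. A lower bound of $\dim\Pic-\inv_k(\tau)=g+1-\inv_k(\tau)$ then does not stop it from being a component. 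Conversely, Eisenbud--Harris gluing cannot be imposed directly on classes, because the subspaces you quotient by differ from aspect to aspect.

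The paper bridges this in four steps.
\begin{enumerate}
\item It works in a frame space $F$ of lifts $\wt{P}^i_j$, with one frame $F^{i,p}$ for each pair of a component and a node on it. Each inner $E^i$ thus carries two sets of lifts, identified only through their classes (the diagonal in $P$). This costs $(g-2)\sum_j\bigl(N-h_\tau(\tau(j)+1,-e_\tau(j))\bigr)$ conditions.
\item It imposes the node equations $\sigma^i_j(\zeta^i)^{\na^i_j}=\lambda^i_j(\gamma^i)^{\nb^i_j}$ and vanishing along $A$.
\item It subtracts the dimension of the fibers of $\wt{W}^{\mathrm{fr}}\to\wt{W}$. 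On the special fiber this equals the generic value $(g-1)\sum_j\bigl(s_\tau(\tau(j)+1,-e_\tau(j))-1\bigr)$. The reason: for each $j'\prec j$ and $0\le\delta\le q_{j',j}$, modifying the frame by $\sigma^i_{j'}\beta_i^{\delta k+s_{j',j}}\eta_i^{(q_{j',j}-\delta)k+r_{j',j}}$ is allowed on exactly one side of the node. Refinedness rules out its being allowed on both sides, and the non-colliding property rules out its being allowed on neither.
\item The arithmetic then collapses via \cref{lem:sum_h1_equals_inv} to $\dim\Pic-\inv_k(\tau)$.
\end{enumerate}
Step 3 is where refinedness and non-collision actually enter the count; in your proposal they appear only as open conditions guaranteeing $\tau$-positivity.
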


\begin{proof}

Let $\Pic = \Pic^d(\mathcal{X}/B)$ and let $\mL$ be the universal limit line bundle on $\mathcal{X} \times_{B} \Pic \xr{\pi} \Pic$. We have the following diagram:

\[\begin{tikzcd}
	{\mathcal{X}\times\text{Pic}} && {\mathcal{X}} \\
	&& {\mathcal{P}} \\
	{\text{Pic}} && B
	\arrow["{\text{pr}_1}", from=1-1, to=1-3]
	\arrow["{\pi}", from=1-1, to=3-1]
	\arrow["{\mathfrak{f}}", from=1-3, to=2-3]
	\arrow["\alpha", from=2-3, to=3-3]
	\arrow["\theta"', from=3-1, to=3-3]
\end{tikzcd}\]

Let $A$ be an effective divisor of relative degree $N$ on $\mathcal{X} \to B$ meeting each component of the special fiber in sufficiently high degree.  Let $\mf_q$ and $\mf_p$ be the divisors on $\mathcal{X}$ consisting of the points of total ramification on the generic fiber and specializing to $p^0$ and $p^g$ respectively on the special fiber (\cref{fig:family}). Define the divisors $D_j\coloneqq \tau(j) \cdot \mf_p+ e_{\tau}(j)\cdot \mf_q$ for $j\in [k]$, which correspond to the essential twists for $\tau$ on the generic fiber. For convenience of notation, we will also use $A$ and $D_j$ to denote their respective pullbacks along $\text{pr}_1$. We use the order $\prec$ on $[k]$ naturally inherited from the set of twists $\{(\tau(j),e_{\tau}(j))\}_{j\in [k]}$ (see \cref{dfn:order}).

\textbf{Setup.}
For each component $E^i$ and $j\in [k]$, we will inductively define a vector bundle $\mathcal{Q}^i_j$ and its projectivization will be denoted $P^i_j\coloneqq\PP \mathcal{Q}^i_j$. For $j \in [k]$, we define
\begin{align*}
    N^{< j} \coloneqq \{j'\in [k]: j' \prec j \text{ and there exists no } j'' \in [k] \text{ such that } j' \prec j'' \prec j \}.
\end{align*}
For a subset $J\subseteq [k]$, we define $N^{< J} = \bigcup_{j\in J} N^{< j}$. The bundle $\mathcal{Q}^{i}_j$ will be defined over an open 
\begin{equation*}
    U^i_{j} \subseteq  H^i_{j}, 
\end{equation*}
where $H^i_j$ will be defined below using the bundles $P^i_{j'}$ for $j'\in N^{< j}$.

First, for each minimal element $j\in [k]$, define $\mathcal{Q}^i_{j} \coloneqq \pi_* \mL(D_j)^i(A)$, which, by cohomology and base change, is a vector bundle of rank $N+\tau(j)+e_{\tau}(j)+\chi_{\tau}+1$ over 
\begin{equation*}
U^i_j = H^i_j = \Pic^{d+\tau(j)+e_{\tau}(j)+N}(\mathcal{X}/B),
\end{equation*} identified with $\Pic$ by tensoring the latter with $\mO_{\mathcal{X}}(D_j)^i(A)$. The projective bundle $P^i_{j}$ comes equipped with a map to $\Pic$ which we denote by $\psi^i_{j}$. 

 In order to define the bundle $\mathcal{Q}^{i}_j$ for non-minimal $j$, we also define a fiber product $P^i(J)$ associated to any subset $J \subseteq [k]$. We will simultaneously define $P^{i}(J)$ and $\mathcal{Q}^{i}_j$ by inducting on the maximum height of any element in $J$, which we denote by $\text{ht}(J)$, and the height of $j$ (we consider the height of a minimal element in $[k]$ to be 0). Define $P^i(\varnothing) = \Pic$ and for a subset $J$ with $\text{ht}(J) = 0$, define
\begin{equation*}
    P^i(J) = \prod^{\Pic}_{j \in J} P^i_{j},
\end{equation*}
which denotes a fiber product over $\Pic$.
Suppose we have defined $P^i(J)$ for all subsets $J$ with $\text{ht}(J) \leq n$ and $P^{i}_{j} = \PP \mathcal{Q}^i_j$ for all elements with height $\leq n$. Now, we define the bundle $\mathcal{Q}^i_j$ for $j$ with height $n+1$. Let $H^i_j = P^i(N^{< j})$. For each $j' \prec j$ there exists $0 \leq q_{j',j}, 0 \leq r_{j',j},s_{j',j} < k$ such that
\begin{equation}\label{eq:twist_up2}
    \mO_{\mathcal{X}}(D_{j'})^i\otimes \mO_{\mathcal{X}}\B((q_{j',j}k + r_{j',j})\mf_p+s_{j',j}\mf_q\B)^i \cong \mO_{\mathcal{X}}(D_j)^i.
\end{equation}

Let $\eta_i$ be the constant section of $\mO_{\mathcal{X}}(\mf_p)^i$ and let $\beta_i$ be the constant section of $\mO_{\mathcal{X}}(\mf_q)^i$; note that $\mO_{\mathcal{X}}(\mf_{p})^{i}|_{E^i} \cong \mO_{E^i}(p^i)$ and $\mO_{\mathcal{X}}(\mf_q)^i|_{E^i} \cong \mO_{E^i}(p^{i-1})$. Let $\mathcal{S}^i_{j'}$  be the the tautological subbundle on $P^{i}_{j'}$; $\mathcal{S}^i_{j'}$ corresponds to the new section in the $j$-th twist. Denote by $\pi^i_{j',j}: H^i_j \to P^i_{j'}$ the projection map and denote by $\varphi^i_{j}: H^i_{j}\to \Pic$ the map induced by composition of (any) projection maps down to $\Pic$. Also, define
\begin{equation*}
    V^i_{j'} \coloneqq \pi^{i*}_{j',j}S^{i}_{j'}\otimes \varphi^{i*}_j\B(\theta^*\alpha_{*}\mathcal{O}_{\mathcal{P}}(q_{j',j})^i\B).
\end{equation*}
Then we have a natural multiplication map
\begin{equation*}
    V^{i}_{j'} \xr{\cdot \eta_i^{r_{j',j}} \cdot \beta_{i}^{s_{j',j}}} \varphi^{i*}_j \B(\pi_* \mL(D_j)^i(A)\B)
\end{equation*}
 giving us a map of vector bundles over $H^i_j$ 
 \begin{equation*}
    \Pi^i_{j}: \prod_{j' \prec j} V^i_{j'} \to \varphi^{i*}_j \B(\pi_* \mL(D_j)^i(A)\B),
 \end{equation*}
 that has rank at most $s_{\tau}\B(\tau(j)+1,-e_{\tau}(j)\B)-1$ by \cref{lem:unique_new_section}. Let $U^i_j \subseteq H^i_j$ be the open over which $\Pi^i_j$ has rank exactly $s_{\tau}\B(\tau(j)+1,-e_{\tau}(j)\B)-1$; $U^{i}_{j} \neq \varnothing$ since it is nonempty over the special fiber. Let $\mathcal{Q}^i_j$ be the cokernel of $\Pi^i_j$ over $U^i_j$ and let $\psi^i_j$ be the natural map $P^{i}_j = \PP\mathcal{Q}^i_{j} \to H^i_{j}$. 
 
 Next, we define $P^i(J)$ for all subsets $J$ with $\text{ht}(J) \leq n+1$. For any element $j \in [k]$ with height $n+1$, define $P^i(\{j\}) = P^i_{j}$. Further, given a $J$ with $\text{ht}(J) \leq n+1$ such that $P^i(J)$ is defined and any $j \in [k]$ with height $\leq n+1$ define
\[P^i(J \cup \{j\}) = P^i(J) \times_{P^i(N^{< J} \cap N^{< j})} P^i_{j}.\]
This defines $P^i(J)$ for all subsets $J$ with height at most $n+1$. We have thus inductively defined the bundles $P^{i}_{j}$ for all $j\in [k]$.

The rank $q_j\coloneqq \rk \mathcal{Q}^i_j$ is
  \begin{align*}
    q_j &= \rk\BB( \varphi^{i*}_j \pi_* \B(\mL(D_j)^i(J)\B)\BB)- \Big(s_{\tau}\big(\tau(j)+1,-e_{\tau}(j)\big)-1\Big)\\
        & = \BB(N+ d-g+1 + \tau(j)+e_{\tau}(j)\BB) - \Big(s_{\tau}\B(\tau(j)+1,-e_{\tau}(j)\B)-1\Big)\\
        & = N - h_{\tau}\B(\tau(j)+1,-e_{\tau}(j)\B)+1,\numberthis \label{eq:q_j}
  \end{align*}
where $h_{\tau}$ is as defined in \eqref{eq:h_tau}.
Let $\wt{P}^i_j \to P^{i}_j$ be the space of lifts of $1$-dimensional subspaces of $\mathcal{Q}^i_j$ to $1$-dimensional subspaces of $\varphi^{i*}_j \pi_* \mL(D_j)^i(A)$. See \cref{fig:tower} for a schematic diagram showing the various spaces and maps associated to $j\in [k]$.

\begin{figure}
    \centering
\begin{tikzcd}
	{P^i_j = \mathbb{P}\mathcal{Q}^i_j} & {\mathbb{P}(\varphi^{i*}_{j}\pi_*\mathcal{L}(D_j)^i(A))} & {\widetilde{P}^i_j} \\
	{U^i_j} & {H^i_j} && {} \\
	{P^i_{j'}} & \dots \\
	& \vdots \\
	{P^{i}_{j''}} & \dots \\
	& {\text{Pic}}
	\arrow["{\psi^i_j}"', from=1-1, to=2-1]
	\arrow[from=1-2, to=1-1]
	\arrow[from=1-2, to=2-2]
	\arrow["{\text{open}}"', hook', from=1-3, to=1-2]
	\arrow[from=1-3, to=2-2]
	\arrow["{\text{open}}", hook, from=2-1, to=2-2]
	\arrow[shift left=2, from=2-2, to=3-1]
	\arrow[dashed, from=2-2, to=3-2]
	\arrow["{\exists! \hspace{1mm} \varphi^i_{j}}", shift left=2, curve={height=-18pt}, from=2-2, to=6-2]
	\arrow[from=3-1, to=4-2]
	\arrow[from=4-2, to=5-1]
	\arrow[from=5-1, to=6-2]
	\arrow[dashed, from=5-2, to=6-2]
\end{tikzcd}
    \caption{The tower of bundles $P^i_j$ coming from the poset $[k]$.}
    \label{fig:tower}
\end{figure}
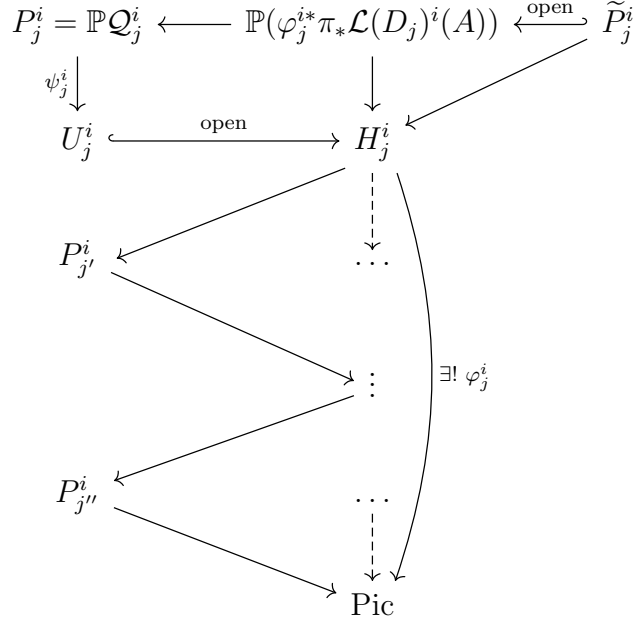

\textbf{Construction of the frame space.} We next define a frame space $F^{i,p}$ that will keep track of the $E^i$-aspect of the $\tau$-positive limit linear series for each node $p$ on $E^i$. We build $F^{i,p}$ iteratively based on the height of an element. Let $j_{\max}$ be the unique maximal element of $[k]$.
Define
\begin{equation*}
    F^{i,p}_{0}= \prod^{\Pic}_{j: \text{ht}(j)=0} \wt{P}^i_{j}.
\end{equation*}
Let $j_{\ell, 1},\dots,j_{\ell, m_{\ell}}$ be all the distinct elements of $[k]$ having height $1 \leq \ell \leq \text{ht}(j_{\max})$. Then define
\begin{equation*}
    F^{i,p}_{\ell} = F^{i,p}_{\ell-1} \times_{H^{i}_{j_{\ell, 1}}} \wt{P}^{i}_{j_{\ell, 1}}\times_{H^{i}_{j_{\ell, 2}}}  \wt{P}^{i}_{j_{\ell, 2}} \times_{H^{i}_{j_{\ell, 3}}} \dots \times_{H^{i}_{j_{\ell,m_{\ell}}}}  \wt{P}^{i}_{j_{\ell, m_{\ell}}},
\end{equation*}
 and let $F^{i,p} = F^{i,p}_{\text{ht}(j_{\max})}$. Then define $F$ to be the `master frame space' that collects all the sections on all the components into one space: 
  \begin{equation*}
      F \coloneqq F^{1, p^1} \times F^{2,p^1} \times F^{2, p^2} \times F^{3, p^2}\times F^{3, p^3} \times \cdots \times F^{g-1, p^{g-2}}\times F^{g-1, p^{g-1}} \times F^{g,p^{g-1}},
  \end{equation*}
  all products being over $\Pic$. Denoting $P^i \coloneqq P^{i}_{j_{\max}}$, we see that $F$ maps to 
  \begin{equation*}
      P\coloneqq P^{1}\times P^{2} \times P^{2} \times P^{3} \times P^{3}\times \dots \times P^{g-1}\times P^{g-1}\times P^{g},
  \end{equation*}
the products again being over $\Pic$. We now compute the dimension of $F$. The relative dimension of $\wt{P}^{i}_{j}$ over $P^i_{j}$ is $
    \rk\BB(\varphi^{i*}_j\pi_* \B(\mL(D_{k-1})^i(A)\B)\BB) - q_j$
and the relative dimension of $P^i_{j}$ over $H^{i}_{j}$ is $q_j-1$, so the relative dimension of $\wt{P}^{i}_{j}$ over $H^i_{j}$ is
\begin{equation*}
     \rk\BB(\varphi^{i*}_j\pi_* \B(\mL(D_{k-1})^i(A)\B)\BB) -1 = N + \tau(j) + e_{\tau}(j) + \chi_{\tau}.
\end{equation*}
 So, since $\wt{P}^{i}_{j}$ for each $j\in [k]$ appears precisely once in $F^{i,p}$ and is fibered over $H^{i}_{j}$, we have 
 \begin{equation*}
     \dim F^{i,p} = \dim \Pic + \sum_{j = 0}^{k-1} \dim_{H^{i}_{j}}  \wt{P}^{i}_{j} = \dim \Pic + \sum_{j=0}^{k-1}
      \B(N+\tau(j)+e_{\tau}(j)+\chi_{\tau}\B). 
 \end{equation*}
 Thus, 
  \begin{align*}
    \dim F &= \dim \Pic + (2g-2)\sum_{j=0}^{k-1}
      \B(N+\tau(j)+e_{\tau}(j)+\chi_{\tau}\B). \numberthis \label{eq:dim_F}
  \end{align*}

  We now construct a subvariety $\wt{W}^{\text{fr}} \subset F$ that will parameterize compatible projective frames for $\tau$-positive limit linear series by imposing three sets of conditions, namely:
\begin{enumerate}
    \item Sections on the same component $E^i$ are equal;
    \item Sections vanish along the divisor $A$;
    \item Sections above the special fiber have the desired ramification $\na^i_j, \nb^i_j$ imposed by $\tau$.
\end{enumerate}
    \textbf{Compatibility on same component}: We restrict to the diagonal in $P$ along the inner components of the product in order to ensure sections in $F$ on the same component are equal. This compatibility requirement imposes
  \begin{align*}
      \dim_{\Pic} P^{2} + \dots \dim_{\Pic} P^{g-1} & = (g-2)\sum_{j=0}^{k-1}(q_j-1)\\
      & = (g-2)\sum_{j=0}^{k-1}\BB(N-h_{\tau}\B(\tau(j)+1,-e_{\tau}(j)\B)\BB)\numberthis \label{eq:same_comp}
  \end{align*}
  conditions.

    \textbf{Vanishing along $A$}: We impose vanishing of sections along the divisor $A$ that has relative degree $N$. After \'{e}tale base change, we may assume that each component of $A$ meets only one $E^i$. Let $A^i$ be the union of components of $A$ meeting $E^i$. Let $Z^i_j \subset P^i_j$ be the locus of sections that vanishes along $A^i$. We compute an upper bound for the codimension of $Z^i_j$: it is cut out by the vanishing of the composition
    \begin{equation}
        S^i_j \to {\psi^{i}_{j}}^* \mathcal{Q}^i_j \to (\varphi^{i}_j\circ \psi^i_j)^* \B(\pi_* \mL(D_j)^i(A)\otimes \mO_{A^i}\B),
    \end{equation}
    which represents $\deg A^i$ equations. Summing over $j \in [k]$ followed by summing over $1\leq i \leq g$, we see that the vanishing of sections in $F$ along $A$ can be imposed using 
    \begin{equation}\label{eq:vanishing}
     \leq k\sum_{i=1}^{g}\deg A^i = kN
    \end{equation} equations.

   \textbf{Compatibility at nodes}: Next, we impose the desired ramification at $p^i$ on the special fiber by describing suitable equations on $F^{i,p^i}\times F^{i+1,p^i}$. Let $\sigma^{i}_{j}$ and $\lambda^{i}_{j}$ be coordinates on $\wt{P}^i_j$ and $\wt{P}^{i+1}_j$ respectively. Denote by $X^{\leq i}$ the chain $E^1 \cup \dots \cup E^i$ and by $X^{> i}$ the chain $E^{i+1} \cup \dots \cup E^g$.  Let $\zeta^i$ the constant section of $\mO_{\mathcal{X}}(X^{\leq i})$, vanishing to the left of $p^i$, and $\gamma^i$ the constant section of $\mO_{\mathcal{X}}(X^{> i})$, vanishing to the right of $p^i$. Define a closed subvariety $Y^i$ of $F^{i,p^i}\times F^{i+1,p^i}$ by the equations 
   \begin{equation}\label{eq:ramification}
       \sigma^{i}_{j} \otimes (\zeta^i)^{\na^i_j} =  \lambda^{i}_{j} \otimes (\gamma^i)^{\nb^i_j},
   \end{equation}
   for $j \in [k]$. The sections appearing in \eqref{eq:ramification} are viewed elements of the projectivization of
   \begin{equation*}
       \pi_* \mL(D_j)^i(A+\na^i_j X^{\leq i}) \cong  \pi_* \mL(D_j)^{i+1}(A+\nb^i_j X^{> i}),
   \end{equation*}
   where the isomorphism of bundles is a result of the isomorphisms
   \begin{equation*}
       \mL(D_j)^i \cong \mL(D_j)^{i+1}\BB(-\B(d+\tau(j)+e_{\tau}(j)\B) X^{\leq i}\BB),
   \end{equation*}
$\mO_{\mathcal{X}}(X^{\leq i}) \cong \mO_{\mathcal{X}}(-X^{> i})$, and $\na^{i}_{j} + \nb^{i}_{j} = d + \tau(j) + e_{\tau}(j)$.

Away from the special fiber, the sections $\zeta^i$ and $\gamma^i$ are nonzero, so \eqref{eq:ramification} simply asserts that $\sigma^i_j$ and $\lambda^i_j$ are equal up to scaling by a constant.

On the special fiber, \eqref{eq:ramification} asserts that $\sigma^i_j$ and $\lambda^i_j$ are determined by their restrictions to $X^{\leq i}$ and $X^{> i}$ respectively which are subject to the following conditions:
\begin{equation}
    \sigma^i_j \mid_{X^{\leq i}} \text{ vanishes to order } \geq \nb^i_j \text{ at } p^i,
\end{equation}
\begin{equation}
    \lambda^i_j \mid_{X^{> i}} \text{ vanishes to order } \geq \na^i_j \text{ at } p^i,
\end{equation}
and, given these conditions we can thus write
\begin{equation}\label{eq:relations}
    \lambda^i_j \mid_{X^{\leq i}}  =  \sigma^i_j\mid_{X^{\leq i}} \cdot \frac{(\zeta^i)^{\na^i_j}\mid_{X^{\leq i}}}{(\gamma^i)^{\nb^i_j}\mid_{X^{\leq i}}} \text{ and }  \sigma^i_j \mid_{X^{> i}}  =  \lambda^i_j\mid_{X^{> i}} \cdot \frac{(\gamma^i)^{\nb^i_j}\mid_{X^{> i}}}{(\zeta^i)^{\na^i_j}\mid_{X^{> i}}}.
\end{equation}
 Whenever $\na^i_j$ and $\nb^i_j$ are nonzero, \eqref{eq:relations} simplifies to $\lambda^i_j \mid_{X^{\leq i}} = 0$ and $\sigma^i_j \mid_{X^{> i}} = 0$.

 For each $j$, \eqref{eq:ramification} thus leads to
 \begin{equation*}
     \rk \pi_* \mL(D_j)^i(A+\na^i_j X^{\leq i}) - 1 = N + \tau(j)+e_{\tau}(j)+\chi_{\tau}
 \end{equation*}
equations, and so  
\begin{equation*}
\text{codim}(Y^i \subset  F^{i,p^i}\times  F^{i+1,p^i})\leq \sum_{j=0}^{k-1}\B(N + \tau(j)+e_{\tau}(j)+\chi_{\tau}\B). 
\end{equation*}
Altogether, 
\begin{equation}\label{eq:nodes}
    \text{codim}(Y^1\times Y^2 \times \cdots \times Y^{g-1} \subset  F) \leq (g-1)\sum_{j=0}^{k-1}\B(N + \tau(j)+e_{\tau}(j)+\chi_{\tau}\B).
\end{equation}
We also remove the closed locus in $Y^1\times Y^2 \times \cdots \times Y^{g-1}$ where the order of vanishing along the special fiber is greater than the desired ramification.

Imposing the three sets of conditions \eqref{eq:same_comp}, \eqref{eq:vanishing}, and \eqref{eq:nodes} results in a closed subvariety $\wt{W}^{\text{fr}} \subset F$ and denote its image in $P$ by $\wt{W}$. 

   \textbf{Fiber dimensions}: In order to use the codimension bound for $\wt{W}^{\text{fr}}$ to obtain a codimension bound for $\wt{W}$ we now determine the dimension of fibers $\wt{W}^{\text{fr}} \to W$.

On the generic fiber our conditions just impose equality of $(g-1)$ pairs of sections across the same nodes, and since the relative dimension of each $\wt{P}^{i}_{j}$ over $P^{i}_{j}$ is $s_{\tau}\B(\tau(j)+1,-e_{\tau}(j)\B)-1$,  the dimension of the generic fiber $\wt{W}^{\text{fr}}\to W$ is 
\begin{equation}
    (g-1)\sum_{j=0}^{k-1}\BB(s_{\tau}\B(\tau(j)+1,-e_{\tau}(j)\B) - 1\BB).
\end{equation}
On the special fiber, given a point $\{\sigma^i_j, \lambda^i_j\}_{j\in[k]}$ of $\wt{W}^{\text{fr}}$, all other points in the same fiber over $P$ have the following form: to $\sigma^i_j$ we may add the image of any section $\sigma^i_{j'}$ with $j' \prec j$ having higher vanishing order at $p^i$ than $\sigma^i_j$. Explicitly, let $0\leq q_{j',j}$ and $0\leq r_{j',j},s_{j',j} < k$ be integers as in \eqref{eq:twist_up}. Then to $\sigma^i_j$ we can add any section of the form $\sigma^i_{j'}\beta_i^{\delta \cdot k+ s_{j',j}}\eta_i^{(q_{j',j}-\delta)\cdot k + r_{j',j}}$ such that $0 \leq \delta \leq q_{j',j}$ and
\begin{equation}\label{eq:bij}
    \nb^i_{j'} + (q_{j',j}-\delta) k + r_{j',j}  > \nb^i_j.
\end{equation}
Similarly, to $\lambda^i_j$ we may add any section of the  form $\lambda^i_{j'}\beta_i^{\delta \cdot k+ s_{j',j}}\eta_i^{(q_{j',j}-\delta)\cdot k + r_{j',j}}$ such that $0 \leq \delta \leq q_{j',j}$ and 
\begin{equation}\label{eq:aij}
    \na^i_{j'} + \delta k + s_{j',j} > \na^i_j.
\end{equation}
Thus, we see that the contribution to the special fiber for each $j$ comes from counting for each $j' \prec j$ the number of $\delta$'s satisfying \eqref{eq:bij} plus those $\delta$'s satisfying \eqref{eq:aij}. 

Note that a $\delta$ satisfies \eqref{eq:bij} if and only if 
\begin{equation*}
    \na^i_{j'} + \delta k + s_{j',j} < \na^i_j
\end{equation*}
 since $\na^i_\ell + \nb^i_{\ell} = d+\tau(\ell)+e_{\tau}(\ell)$ for each $\ell$ and 
\begin{equation*}
    \B(\tau(j)+e_{\tau}(j)\B)-\B(\tau(j')+e_{\tau}(j')\B) = q_{j',j}k+r_{j',j}+s_{j',j}.
\end{equation*}

Thus, a $\delta$ can only satisfy one of \eqref{eq:bij} or \eqref{eq:aij}, and it always does satisfy one because the $a^i_{\ell}$'s are non-colliding. So, each $0 \leq \delta \leq q_{j',j}$ satisfies exactly one of the inequalities. Thus, summing over the $(g-1)$ nodes and using \cref{lem:unique_new_section}, the dimension of the special fiber of $\wt{W}^{\text{fr}}$ over $W$ is 
\begin{equation}\label{eq:fiber_dim}
    (g-1)\sum_{j=0}^{k-1} \sum_{j' \prec j} (q_{j',j}+1) =  (g-1)\sum_{j=0}^{k-1}  \BB(s_{\tau}\B(\tau(j)+1,-e_{\tau}(j)\B)-1\BB).
\end{equation}

   \textbf{Final codimension bound}: Putting together the above dimension counts, every component $\wt{W}' \subset \wt{W}$ has dimension
   \begin{align*}
       \dim \wt{W}' \geq & \dim F - (\text{number of defining equations}) - (\text{fiber dim})\\
     =& \dim \Pic + (2g-2)\sum_{j=0}^{k-1} 
      \B(N+\tau(j)+e_{\tau}(j)+\chi_{\tau}\B)  & \hspace{-1in}(\dim F,  \eqref{eq:dim_F})\\
      &-  (g-2)\sum_{j=0}^{k-1}\BB(N-h_{\tau}\B(\tau(j)+1,-e_{\tau}(j)\B)\BB) & \hspace{-1in}(\text{diagonal condition},  \eqref{eq:same_comp})\\
      &- kN &\hspace{-1in}(\text{vanishing on } D,  \eqref{eq:vanishing}) \\
      &-(g-1)\sum_{j=0}^{k-1}\BB(N + \tau(j)+e_{\tau}(j)+\chi_{\tau}\BB) &\hspace{-1in}(\text{ramification},  \eqref{eq:nodes})\\ 
      &-(g-1) \sum_{j=0}^{k-1}\BB(s_{\tau}\B(\tau(j)+1,-e_\tau(j)\B) - 1\BB) &\hspace{-1in}(\text{fiber dim},  \eqref{eq:fiber_dim})\\
       =& \dim \Pic + (g-2)\sum_{j=0}^{k-1}h_{\tau}\B(\tau(j)+1,-e_{\tau}(j)\B) \\
      &-(g-1)\sum_{j=0}^{k-1}\BB(s_{\tau}\B(\tau(j)+1,-e_\tau(j)\B) - \B(\tau(j)+1+e_{\tau}(j)+\chi_{\tau}\B)\BB)\\
      =& \dim \Pic -\sum_{j=0}^{k-1}h_{\tau}\B(\tau(j)+1,-e_{\tau}(j)\B) &\hspace{-1in}(\text{definition of } h_{\tau}, \eqref{eq:h_tau})\\
      =& \;\dim \Pic  - \inv_{k}(\tau).  &\hspace{-1in}(\text{Lemma }\ref{lem:sum_h1_equals_inv})\\
   \end{align*}
\end{proof}

We can now deduce that every $\tau$-positive limit line bundle on the special fiber is the limit of some line bundle in $W^{\tau}(\mathcal{X}^*, \mathfrak{f}_p|_{\mathcal{X}^*},\mathfrak{f}_{q}|_{\mathcal{X}^*})$, completing the proof of set-theoretic regeneration.

\begin{cor}
    Let $T$ be a reduced word for $\tau \in \wt{\Sigma}_k$. Then $W^{T}(X,p^g,p^0)$ is contained in the closure of $W^{\tau}(\mathcal{X}^*, \mathfrak{f}_p|_{\mathcal{X}^*},\mathfrak{f}_{q}|_{\mathcal{X}^*})$.
\end{cor}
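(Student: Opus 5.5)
The plan is to combine \cref{lem:existence_of_lls} with \cref{thm:regeneration} and a dimension count against the upper bound \eqref{eq:upper_bound}. Fix a reduced word $T$ and a subset $S$ with $\#S=\inv_k(\tau)$, and pad $T$ with identity letters $\sigma^k_\bullet$ in the positions $\{1,\dots,g\}\setminus S$ to get a word of length $g$. Because $W^{T}(X,p^g,p^0)$ is the union of the strata $W^{T,S}$, and the closure of the generic locus is closed, it suffices to show that a general point $L$ of each irreducible component $W^{T,S}$ lies in the closure. By \cref{lem:existence_of_lls}, such an $L$ carries a unique $\tau$-positive limit linear series with ramification $\na^i_j,\nb^i_j$, and this series is refined and non-colliding. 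Hence the hypotheses of \cref{thm:regeneration} hold, and $L$ is the image of a point $x$ of the special fiber of $\wt W$.

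Next, let $\wt W'$ be a component of $\wt W$ through $x$. By \cref{thm:regeneration}, $\dim \wt W'\ge \dim\Pic(\mathcal X/B)-\inv_k(\tau)=g+1-\inv_k(\tau)$. The special fiber of $\wt W$ maps to $W^{\tau}(X,p^g,p^0)$, and the fibers of this map over a general point of $W^{T,S}$ are finite, by the uniqueness in \cref{lem:existence_of_lls}. I need an upper bound on the dimension of the whole special fiber of $\wt W$, not just the part lying over general points. For this I would note that the special fiber parametrizes limit linear series with the prescribed ramification, so it lies over $\bigcup_{T}W^T(X,p^g,p^0)$, which has dimension $g-\inv_k(\tau)$. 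On the lower-dimensional strata the fibers might jump, but I expect the sections there are still determined up to finitely many choices by the exact-vanishing conditions. If that is not clear, I would instead restrict to the open subset of $\wt W$ lying over the open dense locus of $W^{T,S}$ where uniqueness holds; this subset contains $x$.

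Either way, $\wt W'\cap X$ near $x$ has dimension at most $g-\inv_k(\tau)<\dim \wt W'$. So $\wt W'$ is not contained in the special fiber, and it dominates $B$. Its generic fiber is therefore nonempty and consists of $\tau$-positive linear series, whose underlying line bundles lie in $W^{\tau}(\mathcal X^*,\ldots)$ by \cref{prop:non-colliding}. Since $\wt W'$ is the closure of its generic part, $x$ lies in the closure of that part. Pushing forward to $\Pic(\mathcal X/B)$ via the map $P\to\Pic$ then puts $L$ in the closure of $W^{\tau}(\mathcal X^*,\ldots)$. Finally, letting $L$ range over general points of every $W^{T,S}$ and taking closures gives all of $W^T(X,p^g,p^0)$.

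The main obstacle is the local dimension bound on the special fiber of $\wt W$ near $x$, which is what rules out a component of $\wt W$ living entirely over the central fiber. I would try first to obtain it from the finiteness and uniqueness of the limit linear series over the open part of $W^{T,S}$, which is the content of \cref{lem:existence_of_lls}, together with the bound $\dim W^{\tau}(X,p^g,p^0)=g-\inv_k(\tau)$.
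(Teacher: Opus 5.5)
Your outline matches the paper's proof. \cref{lem:existence_of_lls} puts a general point of $W^{T,S}$ in the image of the special fiber of $\wt W$. \cref{thm:regeneration} gives every component of $\wt W$ dimension at least $g+1-\inv_k(\tau)$. A component not contained in the special fiber dominates $B$, and \cref{prop:non-colliding} sends its generic fiber into $W^{\tau}(\mathcal{X}^*,\mathfrak{f}_p|_{\mathcal{X}^*},\mathfrak{f}_{q}|_{\mathcal{X}^*})$.

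The one step you leave open is the local dimension bound at $x$. Your fallback (b) does not work as stated. The preimage in $\wt W$ of a dense open subset of $W^{T,S}$ sits inside the special fiber, so it is not open in $\wt W$. Restricting to it therefore says nothing about which components of $\wt W$ pass through $x$. Option (a) is an unproved expectation, and, as explained below, it is also unnecessary.

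The paper avoids the pointwise bound by working with a component rather than a point:
\begin{itemize}
\item A general point of $W^{T,S}$ has exactly one preimage, so some irreducible component $Y$ of the special fiber of $\wt W$ dominates $W^{T,S}$ with finite generic fiber. Hence $\dim Y=g-\inv_k(\tau)$.
\item Any component $\wt Y$ of $\wt W$ containing $Y$ has dimension at least $g+1-\inv_k(\tau)>\dim Y$.
\item If $\wt Y$ lay in the special fiber it would equal $Y$, because $Y$ is maximal among irreducible closed subsets of the special fiber.
\item Hence $\wt Y$ dominates $B$, and the closure of its image contains $W^{T,S}$.
\end{itemize}

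If you want to keep your pointwise version, let $\phi$ be the map from the special fiber of $\wt W$ to $\Pic^d(X)$. Take any component $Y''$ of the special fiber through $x$. Upper semicontinuity of fiber dimension for $Y''\to\overline{\phi(Y'')}$, together with $\phi^{-1}(L)=\{x\}$, gives $\dim Y''\le\dim\overline{\phi(Y'')}$. So fiber jumps over other strata are irrelevant.

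Both versions need the image of the special fiber of $\wt W$ to lie in a set of dimension $g-\inv_k(\tau)$, such as $W^{\tau}(X,p^g,p^0)$, of which $W^{T,S}$ is a component. The paper uses this implicitly, via the description of the special fiber in \cref{thm:regeneration}. Without it, a component dominating $W^{T,S}$ could map onto something larger.
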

\begin{proof}
    By \cref{lem:existence_of_lls}, we know that a generic line bundle in $W^{T}(X,p^g,p^0)$ is in the image of $\wt{W} \to \Pic$, where $\wt{W}$ is the quasiprojective scheme we constructed in \cref{thm:regeneration}. Moreover, \cref{lem:existence_of_lls} also ensures that every generic line bundle in $W^{T}(X,p^g,p^0)$ has a unique preimage in $\wt{W}$ and since $\dim W^{T}(X,p^g,p^0) = g-\inv_{k}(\tau)$, we infer there exists an irreducible component $Y$ of the special fiber of $\wt{W}$ dominating $W^{T}(X,p^g,p^0)$ and  $\dim Y = g-\inv_k(\tau)$. By \cref{thm:regeneration}, the dimension of any irreducible component of $\wt{W}$ is  $> \dim Y$, so let $\wt{Y}$ be an irreducible component of $\wt{W}$ that contains $Y$ and it necessarily dominates $B$. Then the closure of the image of $\wt{Y}$ in $\Pic$ contains $W^{T}(X,p^g,p^0)$, and is itself contained in the closure of $W^{\tau}(\mathcal{X}^*, \mathfrak{f}_p|_{\mathcal{X}^*},\mathfrak{f}_{q}|_{\mathcal{X}^*})$ by \cref{prop:non-colliding}.
\end{proof}

\subsection{Reducedness and Cohen-Macaulayness}

We now deduce geometric properties of transmission loci using our regeneration theorem. First, we prove reducedness, which will also establish scheme-theoretic regeneration.
\begin{prop}
     Let $\tau\in \wt{\Sigma}_k$. The variety $W^{\tau}(X,p^g,p^0)$ is reduced and hence $W^{\tau}(C,p,q)$ is reduced for a general degree $k$, genus $g$ cover $C$ totally ramified at two points $p$ and $q$ such that $k$ is the smallest positive integer satisfying $\mO_C(k(p-q)) \cong \mO_C$.
\end{prop}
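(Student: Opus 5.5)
The plan is to prove reducedness on the special fiber directly and then transport it to the generic fiber by a flatness argument that uses set-theoretic regeneration.

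\textbf{Setup of the family.} Fix a degree distribution, so that $\Pic^{\vec d}(\mathcal{X}/B)$ is proper over $B$; on the special fiber it is the product $\prod_i \Pic^{d^i}(E^i)$. Define $\mathcal{W}\subset \Pic^{\vec d}(\mathcal{X}/B)$ as the intersection of the determinantal subschemes cut out by the conditions $h^0(\mL(a\mf_p - b\mf_q)_{\vec d'})\geq s_\tau(a+1,b)$. These are taken over all twists and all redistributions of degree obtained by chip-firing.

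\textbf{Reduction to finitely many conditions.} By \eqref{eq:k_invariance} and \cref{lem:h0_is_submodular}(1), only finitely many twists $(a,b)$ impose conditions, and only finitely many redistributions matter. Hence $\mathcal{W}$ is a genuine closed subscheme. Because these determinantal conditions commute with base change, its special fiber is the scheme $W^\tau(X,p^g,p^0)$ and its generic fiber is $W^\tau(\mathcal{X}^*,\mf_p,\mf_q)$.

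\textbf{Step 1 (main obstacle): the special fiber is reduced.} By \eqref{eq:stratification}, $W^\tau(X,p^g,p^0)$ is set-theoretically the union of the $W^{T,S}$. Each $W^{T,S}$ is a product of points $\mO_{E^i}(a_ip^{i-1}+(d-a_i)p^i)$ in $\inv_k(\tau)$ of the factors $\Pic^d(E^i)$, times the full Picard varieties of the remaining factors.

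\begin{itemize}
\item \emph{Local form of the equations.} On an elliptic curve, each single condition reduces to $h^0(E^i, L^i(-a p^{i-1}-(d-a)p^i))\geq 1$ on $\Pic^0(E^i)$. This is the reduced point $\mO_{E^i}$, since it is the theta divisor of $E^i$.
\item \emph{Localising to one aspect.} Following the argument of \cite{larson_global_2025} for splitting loci, I would show that the condition at each twist reduces to a condition on a single aspect. Fix a point $L$ and a twist. Choose the distribution putting all degree on $E^i$, and compute $h^0$ of the limit via the sections on the chain. One should find that, near $L$, each determinantal condition coincides with a single aspect condition $L^i\cong \mO_{E^i}(a p^{i-1}+(d-a)p^i)$, or with a union of such conditions over several $i$. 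Here the Demazure-product bookkeeping of \cite[Prop.\ 3.9]{pflueger2026transmissionpermutationsdemazureproducts} should supply the precise list.
\item \emph{Conclusion of Step 1.} The local ideal is then an intersection of ideals generated by products of distinct local coordinates $x_i$, one for each aspect. This is a monomial (Stanley--Reisner type) ideal, hence radical, so $W^\tau(X,p^g,p^0)$ is the reduced union of coordinate-subspace strata.
\end{itemize}

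The delicate point is the second bullet. One must verify that no nilpotent structure arises from the interaction of several twists $\mL(ap^g - bp^0)$ at once; for instance, two conditions must not both cut the same aspect to a point with multiplicity. I would attack this by checking that, at a general point of each intersection of strata, the Zariski tangent space has the expected dimension. The tangent space to each single determinantal condition is the hyperplane $dx_i=0$, which is nonzero because the theta divisor on $E^i$ is reduced. Reducedness at every point would then follow from the monomial description.

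\textbf{Step 2: passage to the generic fiber.} Let $\mathcal{Z}$ be the closure of the generic fiber of $\mathcal{W}$. Since $B$ is a DVR, $\mathcal{Z}$ is flat and proper over $B$, and $\mathcal{Z}_0\subseteq \mathcal{W}_0 = W^\tau(X,p^g,p^0)$ as schemes. By the corollary to \cref{thm:regeneration}, every $W^T(X,p^g,p^0)$ lies in $\mathcal{Z}_0$, so $\mathcal{Z}_0$ and $\mathcal{W}_0$ have the same support. As $\mathcal{W}_0$ is reduced by Step 1, this forces $\mathcal{Z}_0=\mathcal{W}_0$, so $\mathcal{Z}_0$ is reduced. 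Geometric reducedness of fibers is an open condition in flat proper families, so the generic fiber $\mathcal{Z}^* = W^\tau(\mathcal{X}^*,\mf_p,\mf_q)$ is geometrically reduced.

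\textbf{Step 3: spreading to a general cover.} Finally, the same openness of geometric reducedness in the flat family of transmission loci over the relevant component of $\mathcal{H}_{k,g,2}$ transfers reducedness to a general $(C,p,q)$. This component is the one containing the generic point of $B$, which consists of covers with $k$ minimal by \cref{rmk:char_assumption}.
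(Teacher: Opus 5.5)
Your architecture matches the paper's: first show $W^{\tau}(X,p^g,p^0)$ is reduced, then transfer reducedness to the generic fiber. Step 2 is exactly the paper's chain $\overline{W^{\tau}(\mathcal{X}^*,\mathfrak{f}_p|_{\mathcal{X}^*},\mathfrak{f}_q|_{\mathcal{X}^*})}|_0 \subseteq W^{\tau}(X,p^g,p^0)=W^{\tau}(X,p^g,p^0)_{\mathrm{red}}\subseteq \overline{W^{\tau}(\mathcal{X}^*,\mathfrak{f}_p|_{\mathcal{X}^*},\mathfrak{f}_q|_{\mathcal{X}^*})}|_0$. You are actually more careful than the written proof, because you apply openness of geometric reducedness to the flat closure $\mathcal{Z}$ rather than to $\mathcal{W}$.

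The gap is Step 1, which does not prove that the special fiber is reduced. The stratification \eqref{eq:stratification} determines only the \emph{support}. The real content is that each determinantal subscheme $\{h^0(L(ap^g-bp^0)_{\vec d})\geq r+1\}$ is cut out by a square-free monomial ideal in local coordinates $x_i$ on the aspects. This must hold near \emph{every} point, including points on intersections of several strata. You say this is what ``one should find'' but do not derive it from the gluing description of sections on the chain.

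Your proposed check does not supply it. Computing the Zariski tangent space at a \emph{general} point of each stratum gives only generic reducedness of each component. It cannot rule out embedded components supported on deeper strata. The sentence ``reducedness at every point would then follow from the monomial description'' is circular, since the scheme-theoretic monomial description is exactly what is unproved. Your tangent computation, done for a single aspect condition on $E^i$, already presupposes that local identification.

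The paper closes precisely this step by citing \cite[Thm.\ 7.5]{larson_global_2025}. That result shows that the loci $W^{r}_{\vec d,a,b}(X)$ on the elliptic chain, and their intersections, are reduced. Substituting this citation for your second bullet completes your argument.

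To prove it yourself, you would compute locally the degeneracy ideal of the gluing map $\bigoplus_i H^0(E^i,L_{\vec d}|_{E^i})\to\bigoplus_{\text{nodes}}K$. You would then show it is generated by square-free monomials in the $x_i$. One coordinate per aspect suffices near a given point, because the points $\mO_{E^i}(ap^{i-1}+(d-a)p^i)$ are distinct for distinct residues of $a$ modulo $k$. The sum of these ideals over all twists and distributions is again square-free monomial, hence radical.

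A smaller point concerns Step 3. There you invoke flatness of the family of transmission loci over $\mathcal{H}_{k,g,2}$, which is not established. It is cleaner to use that the locus of geometrically reduced fibers is constructible. You would then choose the arc $B$ so that its generic point maps to the generic point of the component. The paper is equally terse here.
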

\begin{proof}
    The locus $W^{\tau}(X,p^g,p^0)$ is the intersection of determinantal loci of the form 
    \begin{equation*}
        W^{r}_{\vec{d},a,b}(X) = \{L\in \Pic^{\vec{d}}(X): h^0(L(ap^g-bp^0)) \geq r+1\}
    \end{equation*}
    for various $r,a,b,$ and degree distributions $\vec{d}$. As shown in \cite[Thm.\ 7.5]{larson_global_2025}, such loci are reduced and their intersections are also reduced, proving that $W^{\tau}(X,p^g,p^0)$ is reduced. Since geometric reducedness is open in flat families, we see that $W^{\tau}(C,p,q)$ is reduced for a general degree $k$, genus $g$ cover totally ramified at two points $p$ and $q$.
\end{proof}

We have thus shown that
\begin{equation*}
    \oo{W^{\tau}(\mathcal{X}^*,\mathfrak{f}_p|_{X^*},\mathfrak{f}_{q}|_{X^*})}|_{0} \subseteq W^{\tau}(X,p^g,p^0) = W^{\tau}(X,p^g,p^0)_{\text{red}} \subseteq  \oo{W^{\tau}(\mathcal{X}^*,\mathfrak{f}_p|_{X^*},\mathfrak{f}_{q}|_{X^*})}|_{0},
\end{equation*}
implying that $ \oo{W^{\tau}(\mathcal{X}^*,\mathfrak{f}_p|_{X^*},\mathfrak{f}_{q}|_{X^*})}|_{0} = W^{\tau}(X,p^g,p^0)$ as schemes.

We can also easily deduce Cohen-Macaulayness of transmission loci on the general fiber by doing so on the special fiber. 
\begin{thm}\label{thm:cm}
    Let $\tau\in \wt{\Sigma}_k$. The variety $W^{\tau}(X,p^g,p^0)$ is Cohen-Macaulay and hence $W^{\tau}(C,p,q)$ is Cohen-Macaulay for a general degree $k$, genus $g$ cover $C\to \PP^1$ totally ramified at two points $p$ and $q$ such that $k$ is the smallest positive integer satisfying $\mO_C(k(p-q)) \cong \mO_C$.
\end{thm}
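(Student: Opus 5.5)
The plan is to establish Cohen–Macaulayness on the central fiber by showing that $W^{\tau}(X,p^g,p^0)$, with its reduced structure, is a union of products of points and elliptic curves arranged like a Stanley–Reisner / shellable configuration. We then transfer the property to the general fiber using flatness of the closure $\overline{W^{\tau}(\mathcal{X}^*,\mathfrak{f}_p|_{\mathcal{X}^*},\mathfrak{f}_q|_{\mathcal{X}^*})}$ over $B$. The scheme-theoretic equality $\overline{W^{\tau}}|_0=W^{\tau}(X,p^g,p^0)$ proved just above supplies flatness: the closure of the generic fiber in the family over the DVR $B$ is flat, and its special fiber is exactly the central transmission locus. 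Cohen–Macaulayness of the special fiber of a flat proper family propagates to the generic fiber, since the CM locus is open and the family is proper over $B$. Then $W^{\tau}(C,p,q)$ is CM for a general point of the relevant component of $\mathcal{H}_{k,g,2}$.

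For the central fiber I follow the strategy of \cite[Section 7]{larson_global_2025} for splitting loci. By \eqref{eq:stratification}, $W^{\tau}(X,p^g,p^0)$ is the union of the loci $W^{T,S}$ over reduced words $T$ of $\tau+\chi_\tau$ and subsets $S$ with $\#S=\inv_k(\tau)$. Each $W^{T,S}$ is a product $\prod_{i\in S}\{\text{pt}\}\times\prod_{i\notin S}\Pic^d(E^i)$. This union is locally, analytically at any point, a product of a smooth factor with a union of coordinate subspaces. At a point $L$, let $I$ be the set of indices $i$ for which $L^i$ is one of the special bundles $\mO_{E^i}((m+i)p^{i-1}+(d-m-i)p^i)$ with $m\in[k]$. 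Near $L$, each $\Pic^d(E^i)$ with $i\in I$ has a local coordinate $x_i$ vanishing exactly at that special bundle. The special bundles for distinct $m$ are distinct points of $E^i$, since $k$ is minimal with $kp^{i-1}\sim kp^i$, so each $L^i$ singles out at most one value of $m$. Locally the locus is therefore the union of coordinate subspaces $\{x_i=0,\ i\in S\}$ over those $S\subseteq I$ for which the word read off along $S$ is a reduced word for $\tau+\chi_\tau$. This gives the Stanley–Reisner scheme of a simplicial complex $\Delta_L$ on vertex set $I$, whose faces are the complements of such $S$, times a smooth germ. Since the reducedness proposition shows the determinantal scheme structure agrees with this reduced union, it suffices to show $\Delta_L$ is Cohen–Macaulay.

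The main obstacle is identifying $\Delta_L$ and proving it is CM. The word along $I$ is a fixed sequence $(m_i)_{i\in I}$, and the facets correspond to its subwords that are reduced words for the fixed element $\tau+\chi_\tau$ in the affine Weyl group $\Sigma_k$. This is exactly a subword complex in the sense of Knutson–Miller, possibly restricted to subwords whose Demazure product equals the target, as in Pflueger's description via Demazure products. Knutson–Miller show that subword complexes in Coxeter groups are vertex-decomposable, hence shellable and CM, being either balls or spheres. I would carefully match the complement convention: faces are complements of subwords containing a reduced word for the target. In particular, a configuration whose Demazure product is $\ge$ the target in Bruhat order, but is not a reduced word for it, must still be accounted for; Pflueger's Prop.\ 3.9 ensures these points lie in the closures of strata indexed by reduced words. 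If this matching is delicate, I would instead cite \cite[Thm.\ 7.5 and Section 7]{larson_global_2025}, where the analogous statement for splitting loci is proved by exactly this subword-complex argument, and the argument there is insensitive to which target permutation is chosen.
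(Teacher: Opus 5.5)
Your proposal is correct and is essentially the paper's argument. The paper simply defers to \cite[Thm.\ 7.8]{larson_global_2025}, which runs exactly as you describe: near each point, the central-fiber locus is a smooth germ times the Stanley--Reisner scheme of a Knutson--Miller subword complex, which is vertex-decomposable and hence Cohen--Macaulay, and Cohen--Macaulayness then spreads to the general fiber using the scheme-theoretic equality of the special fiber of the closure of the generic transmission locus with $W^{\tau}(X,p^g,p^0)$ together with openness in flat proper families. Your worry about subwords that are not reduced words for the target is harmless: a subset is a face of the subword complex exactly when the Demazure product of its complement is at least the target, and \eqref{eq:stratification} already places every such point in a stratum indexed by a reduced word.
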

\begin{proof}
    The proof is identical to that of \cite[Thm.\ 7.8]{larson_global_2025} \textit{mutatis mutandis}.
\end{proof}

\subsection{Connectedness}\label{sub:connected}

When $g > \inv_{k}(\tau)$, we can also prove connectedness of the $\tau$-transmission locus on the general fiber by doing so on the special fiber. To do so, we utilize \defi{braid moves} in $\Sigma_k$: given a reduced word $T = \sigma^k_{m_\ell}\dots \sigma^{k}_{m_1}$ for $\tau$ there are two types of moves we can perform under certain conditions to obtains other reduced words for $\tau$. The first move called a \defi{flip}, denoted $F^i$, can be applied whenever $m_i - m_{i+1} \neq \pm 1 \pmod{k}$ and is defined as
\[F^i T = \sigma^k_{m_\ell}\dots \sigma^k_{m_{i+2}}\sigma^k_{m_{i}}\sigma^k_{m_{i+1}}\sigma^k_{m_{i-1}}\dots\sigma^{k}_{m_1}.\]
The second move called a \defi{shuffle}, denoted $S^i$, is applicable whenever $m_i \equiv m_{i+2} \pmod{k}$ and $m_{i} \equiv m_{i+1} \pm 1 \pmod{k}$, and is defined as
\[S^i T = \sigma^k_{m_\ell}\dots \sigma^k_{m_{i+3}}\sigma^k_{m_{i+1}}\sigma^k_{m_{i}}\sigma^k_{m_{i+1}}\sigma^k_{m_{i-1}}\dots\sigma^{k}_{m_1}.\]
Since we can transform a reduced word for $\tau$ to any other one through a sequence of braid moves \cite[Theorem 3.3.1(ii)]{bjorner2006combinatorics}, to show $W^{\tau}(X,p^g,p^0)$ is connected, it suffices to show that $W^{T}(X,p^g,p^0)$ is connected with $W^{F^i T}(X,p^g,p^0)$ and $W^{S^i T}(X,p^g,p^0)$ whenever those braid moves are applicable and $g > \inv_{k}(\tau)$. The proof of both these facts is the same as that of \cite[Lemmas 8.4 \& 8.5]{larson_global_2025} \textit{mutatis mutandis}.

\begin{thm}\label{thm:connected}
   Let $\tau \in \wt{\Sigma}_k$ and let $C$ be a general degree $k$, genus $g$ cover of $\PP^1$ totally ramified at two points $p$ and $q$ such that $k$ is the smallest positive integer satisfying $\mO_C(k(p-q)) \cong \mO_C$. If $g > \inv_{k}(\tau)$, then $W^{\tau}(C,p,q)$ is connected.
\end{thm}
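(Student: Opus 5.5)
The plan is to deduce connectedness of $W^{\tau}(C,p,q)$ from connectedness of the central fiber $W^{\tau}(X,p^g,p^0)$, exactly as for Cohen--Macaulayness. We already know that $\overline{W^{\tau}(\mathcal{X}^*,\mathfrak{f}_p|_{\mathcal{X}^*},\mathfrak{f}_q|_{\mathcal{X}^*})}|_0 = W^{\tau}(X,p^g,p^0)$ as schemes. Take $\mathcal{W}$ to be the closure of the generic transmission locus inside the relative compactified Picard scheme over $B$. It is proper over $B = \mathrm{Spec}\, K[[t]]$, its special fiber is $W^{\tau}(X,p^g,p^0)$, and it is flat over $B$, since by construction it has no components supported on the special fiber. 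By Zariski's connectedness principle (Stein factorization of a proper flat morphism over a DVR), if the special fiber is connected then so is the geometric generic fiber. A general cover is then handled by the standard spreading-out argument on the relevant component of $\mathcal{H}_{k,g,2}$.

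The bulk of the work is therefore to show that $W^{\tau}(X,p^g,p^0)$ is connected when $g > \inv_k(\tau)$. By \eqref{eq:stratification} it is the union of the $W^{T}(X,p^g,p^0)$ over reduced words $T$ for $\tau+\chi_\tau$.

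\textbf{Step 1.} Show that each $W^{T}$ is connected. Each $W^{T,S}$ is a product of points and copies of $\Pic^d(E^i)$, hence irreducible. Given two subsets $S, S'$ of size $\ell = \inv_k(\tau) < g$, we connect them by moving one element of $S$ at a time to an adjacent index. Moving the element with label $m$ from position $i$ to position $i+1$ is possible when $i+1 \notin S$. The two loci $W^{T,S}$ and $W^{T,S\cup\{i+1\}\setminus\{i\}}$ then intersect: on both, the aspects $L^i$ and $L^{i+1}$ can be chosen so that $L^i$ takes the value forced for $S$ and $L^{i+1}$ takes the value forced for $S'$, since each aspect is otherwise free. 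Because $g>\ell$, a free index always exists, and these moves connect any two subsets.

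\textbf{Step 2.} Reduced words are connected by flips and shuffles \cite[Theorem 3.3.1(ii)]{bjorner2006combinatorics}, so it suffices to show that $W^{T}$ meets $W^{F^iT}$ and $W^{S^iT}$.

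For a flip, commuting letters $m_i, m_{i+1}$ with $m_i - m_{i+1} \not\equiv \pm1$ are placed at nonadjacent curves, with a free curve between them; this uses $g>\ell$. The forced line bundles depend only on the letter and the curve index, so the resulting point lies in both strata.

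For a shuffle $aba \to bab$, I would follow \cite[Lemma 8.5]{larson_global_2025}. Choose $S$ with the three relevant positions on curves $c_1<c_2<c_3$ and a free curve $c_0$ adjacent to them. Then check that the point with $L^{c}$ specified appropriately lies in both strata, using \cite[Prop.\ 3.9]{pflueger2026transmissionpermutationsdemazureproducts}: its transmission permutation dominates $\tau$ via the Demazure product of the factor permutations along the chain. The idea is that when an aspect is special for two letters at once, the Demazure product of the local factors can be computed with either bracketing.

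I expect this shuffle case to be the main obstacle. The first thing I would try is to work directly with Pflueger's Demazure description: exhibit explicit aspect choices on four consecutive curves, and verify via \cref{prop:swap} that the local Demazure product equals $\sigma_a\sigma_b\sigma_a = \sigma_b\sigma_a\sigma_b$ in both decompositions.
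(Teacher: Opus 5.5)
Your architecture is the paper's. You show $W^{\tau}(X,p^g,p^0)=\bigcup_T W^{T}(X,p^g,p^0)$ is connected by linking strata of braid-adjacent reduced words, where the paper simply invokes \cite[Lemmas 8.4 \& 8.5]{larson_global_2025} \emph{mutatis mutandis}, and then you specialize. Your Step~1 is correct and makes explicit something the paper leaves implicit. The trouble is Step~2, which is where the content lies.

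Write $\Lambda_c(x):=\mO_{E^c}\bigl((x+c)p^{c-1}+(d-x-c)p^{c}\bigr)$ for the aspect a letter $x$ forces on $E^c$. Since $W^{T,S}$ is cut out \emph{only} by $L^c\cong\Lambda_c(m_{\phi_S(c)})$ for $c\in S$, and aspects are independent coordinates, $W^{T,S}\cap W^{T',S'}\neq\varnothing$ exactly when the forced aspects agree on $S\cap S'$. Moreover $\Lambda_c(a)\not\cong\Lambda_c(b)$ for $a\not\equiv b\pmod k$, so no curve is ever ``special for two letters at once.''

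Your flip configuration therefore fails. Suppose $T$ puts $a=m_i$ on $E^c$ and $b=m_{i+1}$ on $E^{c+2}$ with $E^{c+1}$ free, and $g=\inv_k(\tau)+1$. Then any $S'$ for $F^iT$ differs from $S$ by trading one curve for $E^{c+1}$. Every such trade creates a conflict on $S\cap S'$ (on $E^c$ or $E^{c+2}$, using also $m_{i-1}\neq m_i$), whatever you choose for $L^{c+1}$.

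The Demazure-product check you propose for the shuffle aims at the wrong target. Showing a point's transmission permutation dominates $\tau$ only places it in $W^{\tau}(X,p^g,p^0)$, not in $W^{T}\cap W^{S^iT}$. Verifying $\sigma_a\sigma_b\sigma_a=\sigma_b\sigma_a\sigma_b$ only restates that both words represent $\tau+\chi_\tau$.

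What works is to overlap the two words on consecutive curves, with the free curve at the \emph{end} of the block; Step~1 lets you arrange this. Keep all other letters on the same curves for both words.

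For a flip with $(m_i,m_{i+1})=(a,b)$, take the point with $L^{c}\cong\Lambda_c(a)$, $L^{c+1}\cong\Lambda_{c+1}(b)$ and $L^{c+2}\cong\Lambda_{c+2}(a)$. It lies in $W^{T,S}$ via $E^c,E^{c+1}$ and in $W^{F^iT,S'}$ via $E^{c+1},E^{c+2}$.

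For a shuffle $aba\to bab$, your set-up of three curves plus an adjacent free curve is right. The aspects $\Lambda_c(a),\Lambda_{c+1}(b),\Lambda_{c+2}(a),\Lambda_{c+3}(b)$ give a point of $W^{T,S}$ via the first three curves and of $W^{S^iT,S'}$ via the last three.

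Each construction uses exactly one curve outside $S$, which is where $g>\inv_k(\tau)$ enters, and no Demazure computation is needed.

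Finally, ``special fiber connected $\Rightarrow$ geometric generic fiber connected'' is false for general proper flat families over a DVR (e.g.\ $x^2=t$). You must use the reducedness of $W^{\tau}(X,p^g,p^0)$ proved just before. It gives $H^0(\mO)=K$ on the special fiber, hence $f_*\mO_{\mathcal W}=\mO_B$, and flat base change then yields geometric connectedness.
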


\section{Smoothness} \label{sec:smoothness}

In this section, we prove the smoothness of $W^{\tau}(C,p,q)$ away from all codimension 2 transmission loci, and as a consequence deduce normality and irreduciblity as well. We begin by constructing a Petri map for our setting, whose injectivity will yield smoothness along the open locus of type precisely $\tau$.

Let ${W^{\tau}}(C,p,q)^{\circ} \subset W^{\tau}(C,p,q)$ denote the open locus of line bundles $\mL$ satisfying $\tau_{\mL} = \tau$. For each $j$, let $D_j := \tau(j)\cdot p-j\cdot q$ and
\begin{equation*}
    S_j \coloneqq H^0(\mL(D_j))/H^0(\mL(D_j-p)).
\end{equation*}
Let $\mL \in \Pic^{\chi_{\tau}+g}(C)$. If $\tau_{\mL} = \tau$, then it is clear that $\dim S_{j} = 1$ for every $j\in [k]$. On the other hand, $\dim S_j = 1$ for all $j\in [k]$ implies that $\tau_{\mL}^{-1}(\tau(j)) \geq j$ for all $j\in [k]$, which along with $\chi_{\tau_{\mL}} = \chi_{\tau}$ forces $\tau_{\mL} = \tau$. Thus, $\mL \in W^{\tau}(C,p,q)^{\circ}$ if and only if $\dim S_j = 1$ for every $j\in [k]$.

Now let $\mL \in \Pic^d(C)$ satisfy $\tau_{\mL} = \tau$, and let $\xi \in T_{\mL}\Pic^d(C) \cong \text{Ext}^{1}(\mL,\mL) \cong H^1(C, \mO_C)$ be a tangent vector. Let $\mL_{\xi}$ denote the corresponding first-order deformation of $\mL$. For each $j$, tensoring the extension of $\mL$ corresponding to $\xi$ by $\mO_C(D_j)$ yields a short exact sequence
\begin{equation*}
    0 \to \mL(D_j) \to \mL_{\xi}(D_j) \to \mL(D_j) \to 0.
\end{equation*}
The associated long exact sequence in cohomology contains $H^0(C,\mL(D_j)) \xrightarrow{\partial_{\xi,j}} H^1(C,\mL(D_j))$,
where $\partial_{\xi,j}$ is the connecting homomorphism. A nonzero class in $S_j$ lifts to first order if and only if its image under the induced map $\bar{\partial}_{\xi,j}: S_j \to H^1(C,\mL(D_j))$
vanishes. Thus, if we define
\begin{equation*}
    \Phi_{\mL}: H^1(\mO_C) \to \bigoplus_{j=0}^{k-1}\Hom(S_j, H^1(\mL(D_j)))
\end{equation*}
by $\xi \mapsto (\bar{\partial}_{\xi,j})_j$, then
\begin{equation*}
    T_{\mL} W^{\tau}(C,p,q) = \ker \Phi_{\mL}.
\end{equation*}
Dualizing, we obtain a map
\begin{equation*}
    \Phi_{\mL}^{\vee}: \bigoplus_{j=0}^{k-1} S_j\otimes H^1(\mL(D_j))^{\vee} \to H^1(\mO_C)^{\vee},
\end{equation*}
which by Serre duality becomes
\begin{equation*}
    \Phi_{\mL}^{\vee}: \bigoplus_{j=0}^{k-1} S_j\otimes H^0(\mL^{\vee}(-D_j)\otimes \omega_{C}) \to H^0(\omega_C).
\end{equation*}

To compute the dimension of the domain of $\Phi^{\vee}_{\mL}$, we use the following lemma.

\begin{lem}\label{lem:h1_equal}
    For $\tau \in \wt{\Sigma}_k$, we have $h_{\tau}(\tau(j)+1, j) = h_{\tau}(\tau(j)+1, -e_{\tau}(j))$ for all $j\in[k]$.
\end{lem}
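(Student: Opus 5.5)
Recall $h_{\tau}(a,b) = \#\{n < b : \tau(n) \geq a\}$, so we must compare $\#\{n<j:\tau(n)\geq \tau(j)+1\}$ with $\#\{n<-e_\tau(j):\tau(n)\geq\tau(j)+1\}$. The plan is to split according to the two cases in \cref{dfn:ess_twists}.

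\textbf{Case $e_\tau(j) = -j$.} The two arguments coincide, since $-e_\tau(j)=j$, and there is nothing to prove.

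\textbf{Case $e_\tau(j)=0$.} By definition, $\tau(j') < \tau(j)$ for every $0\le j'<j$, and we must show
\[
\#\{n<j : \tau(n) > \tau(j)\} = \#\{n<0 : \tau(n)>\tau(j)\}.
\]
Equivalently, the set $\{0\le n<j : \tau(n)>\tau(j)\}$ should be empty. This holds because each such $n$ is some $j'\in[j]$, and for those we have $\tau(j')<\tau(j)$. Hence the difference of the two counts is exactly the number of $n\in\{0,\dots,j-1\}$ with $\tau(n)>\tau(j)$, which is zero.

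The only point needing care is the bookkeeping: the counts range over all $n<b$, which is an infinite range, but both are finite because $\tau$ has bounded difference. The difference $h_\tau(a,j)-h_\tau(a,0)$ is then simply the count over the window $0\le n<j$. I expect no real obstacle here; the argument reduces to this one-line comparison once the definition of $e_\tau$ is unwound.
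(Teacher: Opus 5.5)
Your proposal is correct and matches the paper's proof: the case $e_\tau(j)=-j$ is trivial, and when $e_\tau(j)=0$ the indices $0\le n<j$ contribute nothing because $\tau(n)<\tau(j)$ there, so the two counts agree.
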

\begin{proof}
    If $e_{\tau}(j) = -j$, there is nothing to show. Now, suppose $e_{\tau}(j) = 0$, which implies that $\tau(j') < \tau(j)$ for all $0 \leq j' < j$. So, as in \eqref{eq:h_tau}, 
    \[h_{\tau}(\tau(j)+1,j) = \#\{n < j: \tau(n) > \tau(j)\} = \#\{n < 0: \tau(n)> \tau(j)\}= h_{\tau}(\tau(j)+1,0).\qedhere\]\end{proof}
Thus, by \cref{lem:h1_equal,lem:sum_h1_equals_inv}, the domain of $\Phi_\mL^\vee$ has dimension $\inv_k(\tau) = \text{codim }W^{\tau}(C,p,q)$. Therefore, to prove smoothness of $W^{\tau}(C,p,q)$ at $\mL$, it suffices to show that $\Phi_\mL^{\vee}$ is injective. We prove this by passing to the central fiber $X$ of our family $\mathcal{X}$.

Suppose $\mL^{*}$ is a line bundle in $W^{\tau}(\mathcal{X}^*, \mathfrak{f}_p|_{\mathcal{X}^*},\mathfrak{f}_{q}|_{\mathcal{X}^*})^{\circ}$. Let the limit of $\mL^*$ on the central fiber be associated to a reduced word $T = \sigma_{m_g}^k\dots \sigma_{m_1}^k$. Let $L(j)_{\vec{d}}$ denote the limit with degree distribution $\vec{d}$ on the central fiber of $\mL^{*}(\tau(j)\cdot\mathfrak{f}_p|_{\mathcal{X}^*} - j\cdot \mathfrak{f}_q|_{\mathcal{X}^*})$. For each $j\in[k]$, define a degree distribution $\vec{d}_j$ associated to $T$ by
\begin{equation}\label{eq:distribution}
\vec{d_j}(i) = \begin{cases}
0 &\text{ if }  j_{-}(i) = j\\
2 & \text{ if } j_+(i) = j\\
1 & \text{ else}.
\end{cases} \end{equation}
We show several important properties of $L(j)_{\vec{d}_{j}}$ in \cref{prop:distribution_properties}, including that $\vec{d}_j$ is indeed a degree distribution summing to $\deg(L(j))$.

\begin{prop}\label{prop:distribution_properties}
    The following statements are true for the degree distribution $\vec{d}_j$ associated to $T = \sigma^k_{m_g}\dots \sigma^k_{m_1}$:
    \begin{enumerate}
    \item We have $\#\{1 \leq i \leq g : j_{-}(i) = j\} = h_{\tau}(\tau(j)+1,j).$
    \item For each $1 \leq i \leq g$, we have $\sum_{i' < i} \vec{d}_{j}(i') = T^{\leq i-1}(j) -j + i-1$ and $\sum_{i' > i} \vec{d}_{j}(i') = d+\tau(j)-T^{\leq i}(j)-i$. In particular, $\vec{d}_j$ sums to $d + \tau(j) - j$.
    \item If $j  = j_{-}(i)$, then $L(j)_{\vec{d}_j}|_{E^i} \cong \mO_{E^i}$.
    \item If $j = j_{+}(i)$, then $L(j)_{\vec{d}_j}|_{E^i} \cong \mO_{E^i}(p^{i-1} + p^i)$.
    \item If $j \not \in \{j_{-}(i),j_{+}(i)\}$, then $L(j)_{\vec{d}_j}|_{E^i}$ is not isomorphic to either $\mO_{E^i}(p^{i-1})$ or $\mO_{E^i}(p^{i})$.
    \item We have $h^0(L(j)_{\vec{d}_j}^{\vee} \otimes \omega_X) = h_{\tau}(\tau(j)+1, j)$. 
    \end{enumerate}
\end{prop}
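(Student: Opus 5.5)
The plan is to prove the six items roughly in order, since (1) and (2) are purely combinatorial and the geometric items (3)–(6) follow from (2) together with the explicit description of the aspects $L^i$ on $W^{T}(X,p^g,p^0)$.

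\textbf{Items (1) and (2).} For (2), note that $\vec{d}_j(i)-1 = T^{\leq i}(j)-T^{\leq i-1}(j)$ for every $i$. This holds by \eqref{eq:truncations}, with the convention that $m_i=\bullet$ gives no change. Telescoping gives
\[\sum_{i'<i}\vec{d}_j(i') = (i-1) + T^{\leq i-1}(j) - T^{\leq 0}(j) = T^{\leq i-1}(j) - j + i - 1.\]
The tail sum follows the same way, using $T^{\leq g}(j)=\tau(j)+\chi_\tau = \tau(j)+d-g$. Adding the two sums to $\vec{d}_j(i)$ shows that the total is $d+\tau(j)-j$, which is the degree of $L(j)$.

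For (1), I would count the steps at which $j$ is the decreasing index. Consider the net movement of residue $j$ in $[k]$ under the reduced word. Each step where $j=j_-(t)$ corresponds to one $k$-inversion class of $\tau$ whose pair has $j$ as its second entry. This is justified by the $\delta_t$ bookkeeping in the proof of \cref{prop:increasing_decreasing}: each letter creates exactly one new $k$-inversion, between $j_-(t)$ and $j_+(t)$. I would then identify these classes with $\#\{n<j:\tau(n)>\tau(j)\}=h_\tau(\tau(j)+1,j)$, since $n<j$ with $\tau(n)>\tau(j)$ means $n$ is moved past $j$ from the left. Summing (1) over $j$ and comparing with \cref{lem:h1_equal,lem:sum_h1_equals_inv} gives $\sum_j h_\tau = \inv_k(\tau)=\#$letters, which serves as a consistency check. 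The per-$j$ identity still needs the inversion-tracking argument, and I expect this to be the main obstacle.

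\textbf{Items (3)–(5).} By (2), the restriction of $L(j)_{\vec{d}_j}$ to $E^i$ is obtained from the aspect by twisting. Explicitly,
\[L(j)_{\vec{d}_j}|_{E^i}\cong L^i\bigl(D^i\bigr)\bigl(-(T^{\leq i-1}(j)-j+i-1)p^{i-1}-(d+\tau(j)-T^{\leq i}(j)-i)p^i\bigr),\]
where $D^i=-jp^{i-1}+\tau(j)p^i$. This is exactly the bundle $M^i_j$ from \cref{lem:existence_of_lls}, with $e_\tau(j)$ replaced by $-j$; the $e_\tau$ terms cancel. Hence (3), (4) and (5) are Cases 2, 3 and 1 of that proof.

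\textbf{Item (6).} On a chain, $\omega_X$ restricts to $\mO_{E^i}(p^{i-1}+p^i)$ on interior components, with a single node on the ends. I would use the standard gluing computation for $h^0$ on compact-type chains. By (3)–(5), every component where $j\ne j_-(i)$ gives a degree $\le 1$ bundle that is not $\mO(\text{node})$. Those components contribute no sections that survive gluing, or at most force vanishing. Each component with $j=j_-(i)$ contributes a trivial summand $\mO_{E^i}$ in the dual twisted by $\omega$, which is $\omega_{E^i}$-type, so it yields exactly one independent section of $L(j)^\vee_{\vec{d}_j}\otimes\omega_X$ supported there.

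Consequently $h^0$ equals the number of $i$ with $j_-(i)=j$, and (1) finishes the proof. To make the gluing rigorous, I would argue inductively along the chain, mirroring the analogous computation in \cite{larson_global_2025}.
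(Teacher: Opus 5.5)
Your proposal follows the paper's proof item by item: (2) is the same increment bookkeeping (telescoped rather than inducted), (3)--(5) are Cases 2, 3, 1 of \cref{lem:existence_of_lls} via the identification $L(j)_{\vec{d}_j}|_{E^i}\cong M^i_j$, and (6) is the paper's left-to-right induction along the chain. In (6), the step to make explicit is that on an interior component with $j=j_-(i)$ the twisted dual is $\omega_X|_{E^i}\cong\mO_{E^i}(p^{i-1}+p^i)$ (not a trivial bundle), so a section vanishing at $p^{i-1}$ must also vanish at $p^i$; this is what handles consecutive such components. For (1), the one thing still missing is the orientation of the inversion each letter creates: $\sigma^k_{m_t}$ swaps the adjacent values $m_t+kq$ and $m_t+1+kq$, held at positions $a\equiv j_+(t)$ and $b\equiv j_-(t)$, so the length going up forces $a<b$, and since no class is ever removed along a reduced word, the steps with $j_-(t)=j$ biject with the $k$-inversion classes of $\tau$ whose right entry is $\equiv j$, i.e.\ with $\{n<j:\tau(n)>\tau(j)\}$ --- the paper's own justification of (1) is only a brief appeal to this same bookkeeping.
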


\begin{proof}
    We verify each statement in turn.

    \begin{enumerate}
        \item Note that 
        \[h_{\tau}(\tau(j)+1,j) = \#\{n < j: \tau(n) > \tau(j)\}\] counts the $k$-inversions $(m,n)$ such that $n \equiv j\pmod{k}$. As in the proof of \cref{prop:increasing_decreasing}, this is equal to $\sum_{j < j' \in [k]} \inv_k(\tau, j, j')$, which in turn is the number of indices $i$ such that $j_{-}(i) = j$.
        \item To prove the claim about the sum of $\vec{d}_j$ to the left of $i$ we argue by induction on $i$. When $i = 1$, the degree distribution to the left of $E^1$ is $0$, as claimed. Assume the formula holds for $i$. There are then three cases for $i+1$. If $j=j_{-}(i)$, the left-hand side remains unchanged, while the right-hand side increases by
        \begin{equation*}
        (T^{\leq i}(j)-T^{\leq i-1}(j)) + 1 = -1 + 1 = 0.
        \end{equation*}
        If $j= j_{+}(i)$, both sides increase by $2$. If $j \not \in \{j_{-}(i),j_{+}(i)\}$, both sides increase by $1$. The claim about the sum of $\vec{d}_{j}$ to the right of $i$ is proved similarly, by induction starting from $E^g$.
        \item Let $j = j_-(i)$. The claimed statement follows from Case 2 of \cref{lem:existence_of_lls}, but we indicate the argument again. As in \eqref{eq:L_i},
\begin{equation*}
L^i \cong \mO_{E^i}((T^{\leq i-1}(j) +i-1)p^{i-1} + (d-T^{\leq i}(j) - i)p^{i}),
\end{equation*}
and so
\begin{equation*}
L(j)^i \cong \mO_{E^i}\left(\sum_{i' < i} \vec{d}_j(i')p^{i-1} + \sum_{i' > i} \vec{d}_j(i')p^{i}\right),
\end{equation*}
and the claim follows.
        \item Similarly, this is shown in Case 3 of \cref{lem:existence_of_lls}.
        \item And, this is shown in Case 1 of \cref{lem:existence_of_lls}.
        \item Recall the dualizing sheaf $\omega_X$ of $X$ satisfies $\omega_X|_{E^1} \cong \mO_{E^1}(p^1)$, $\omega_X|_{E^g} \cong \mO_{E^g}(p^{g-1})$, and $\omega_X|_{E^i} \cong \mO_{E^i}(p^{i-1}+p^i)$ for $1<i<g$. We first show that every global section of $L(j)_{\vec{d}_j}^{\vee} \otimes \omega_X$ vanishes at all nodes. We prove this by induction, starting at $p^1$. If $j = j_{-}(1)$, then by (3) the restriction of the bundle to $E^1$ is $\mO_{E^{1}}(p^1)$, which has a basepoint at $p^1$. If $j \neq j_{-}(1)$, then by (4) and (5), $L(j)_{\vec{d}_j}^{\vee}\otimes \omega_X|_{E^1}$ has non-positive degree and is not isomorphic to $\mO_{E^1}$, so it has no nonzero sections. Now suppose the section vanishes at $p^m$ for all $m < i < g$. We show that it also vanishes at $p^i$. If $j = j_-(i)$, then the restriction of the section to $E^i$ is a section of $\mO_{E^i}(p^i) \subset \mO_{E^i}(p^{i-1}+p^i)$ (since it vanishes at $p^{i-1}$), which has a basepoint at $p^i$. If $j \neq j_{-}(i)$, then vanishing at $p^{i-1}$ forces the section to vanish identically on $E^i$. This completes the induction. Since $L(j)_{\vec{d}_j}^{\vee}\otimes \omega_X|_{E^i}$ has a basepoint at all nodes on $E^i$ if and only if $j = j_{-}(i)$, any element of $H^0(L(j)_{\vec{d}_j}^{\vee}\otimes \omega_X)$ vanishes identically on any $E^i$ with $j \neq j_{-}(i)$. This fact along with (1) proves the claim.
        \end{enumerate}
\end{proof}

\begin{prop}\label{prop:injective_on_X}
Let $\vec{d}_{j}$ be the degree distribution associated to $T = \sigma^k_{m_g}\dots \sigma^k_{m_1}$ defined in \eqref{eq:distribution} and let 
\begin{equation*}
    S_j^X := H^0(L(j)_{\vec{d}_j})/H^0(L(j)_{\vec{d}_j}(-p^g)).
\end{equation*}
Fix a $j\in [k]$ and suppose $i_1,\dots,i_{n_j}$ are precisely the indices $i$ such that $j_{-}(i) = j$. Then for each $1 \leq m \leq n_j$, there exists a section $\lambda^m_j$ in the image of
    \begin{equation*}
    S_j^X \otimes H^0(L(j)_{\vec{d}_{j}}^{\vee} \otimes \omega_X) \to H^0(\omega_X)
    \end{equation*}
    such that $\lambda^m_j$ is supported on $E^{i_m}$ and vanishes identically on every other component.
    Thus, the following map is injective:
\begin{equation*}\Phi^{\vee}_{L(j)_{\vec{d}_j}}:\bigoplus_{j=0}^{k-1} S_j^X \otimes H^0(L(j)_{\vec{d}_j}^{\vee}\otimes \omega_X) \to H^0(\omega_X).\end{equation*}
\end{prop}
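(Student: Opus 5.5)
We first identify $H^0(L(j)^{\vee}_{\vec d_j}\otimes\omega_X)$ explicitly, using the proof of \cref{prop:distribution_properties}(6). Every section vanishes at all nodes and is identically zero on every $E^i$ with $j\neq j_-(i)$. On $E^{i_m}$ the restriction $L(j)^\vee_{\vec d_j}\otimes\omega_X|_{E^{i_m}}\cong \omega_X|_{E^{i_m}}$ is trivial by \cref{prop:distribution_properties}(3). Hence the space has a basis $\{\nu^m_j\}_{1\le m\le n_j}$, where $\nu^m_j$ is supported on $E^{i_m}$. It is the section of $\mO_{E^{i_m}}(p^{i_m-1}+p^{i_m})$ vanishing at both nodes, i.e.\ the constant section of $\mO_{E^{i_m}}$ viewed inside $\omega_X|_{E^{i_m}}$. (At the ends $i_m=1$ or $g$, it is the section of $\mO(p^1)$ or $\mO(p^{g-1})$ vanishing at the node.) Extending by zero gives a global section because it vanishes at the nodes. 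This gives $n_j=h_\tau(\tau(j)+1,j)$ sections, consistent with (1) and (6).

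Next we produce a class in $S^X_j$ whose representative is nonvanishing on $E^{i_m}$. We take the section $\mu^{i_m}_j$ built in \cref{lem:existence_of_lls}, Case 2. After the twist to degree distribution $\vec d_j$, its aspect on $E^{i_m}$ is a nonzero section of $L(j)_{\vec d_j}|_{E^{i_m}}\cong\mO_{E^{i_m}}$, hence a nowhere-vanishing constant. We glue it to the aspects on the other components via the refinedness relations to obtain a global section $s_j\in H^0(L(j)_{\vec d_j})$. By \cref{prop:distribution_properties}(2), the degree of the restriction of $L(j)_{\vec d_j}$ to each $E^i$ is $0$, $1$ or $2$. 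Using (3)--(5), we then check that a section which is constant nonzero on $E^{i_m}$ extends across the chain, and that its class in $S^X_j$ is nonzero because the extension does not vanish at $p^g$. The fallback, if this fails, is to note that pairing with $\nu^m_j$ only depends on the restriction to $E^{i_m}$, so any section nonvanishing on $E^{i_m}$ suffices.

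The multiplication $s_j\otimes\nu^m_j\mapsto s_j\cdot\nu^m_j=:\lambda^m_j$ lies in $H^0(\omega_X)$. It is supported on $E^{i_m}$ because $\nu^m_j$ is. It is nonzero there because $s_j|_{E^{i_m}}$ is a nonzero constant. This is exactly the section of $\omega_X$ supported on $E^{i_m}$, namely the differential on $E^{i_m}$.

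For injectivity, the domain has dimension $\sum_j n_j=\inv_k(\tau)$ by \cref{lem:sum_h1_equals_inv,lem:h1_equal}. The images $\lambda^m_j$ span the span of the $\omega$-sections supported on the components $E^i$ with $m_i\neq\bullet$. Each index $i$ with $m_i\ne\bullet$ equals $i_m$ for exactly one pair $(j,m)$, namely $j=j_-(i)$. So these images are supported on pairwise distinct components and are linearly independent in $H^0(\omega_X)$, which has the basis of differentials supported on single components. A surjection from a space of dimension $\inv_k(\tau)$ onto an independent set of $\inv_k(\tau)$ vectors is an isomorphism onto its image, so the map is injective. The main obstacle is the middle step: verifying that $S^X_j$ is one-dimensional and that its generator restricts to a nonzero constant on each $E^{i_m}$ simultaneously. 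This needs the chain-by-chain extension argument, where degree-$0$ nontrivial and degree-$1$ non-special aspects force uniqueness, and it must be checked against (3)--(5).
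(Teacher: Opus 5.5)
Your proposal is correct and is essentially the paper's proof. The paper likewise takes a section $s_j$ of $L(j)_{\vec d_j}$ that is nonvanishing at every node and at $p^g$, multiplies it by sections $\theta^{i_m}_j$ of $L(j)^{\vee}_{\vec d_j}\otimes\omega_X$ supported on $E^{i_m}$ (your $\nu^m_j$), and concludes from the disjoint supports together with the dimension count from \cref{prop:distribution_properties}, \cref{lem:h1_equal} and \cref{lem:sum_h1_equals_inv}. The fact $\dim S^X_j=1$ that you flag is also used implicitly in the paper, and it does hold. By (3) the degree-$0$ aspects are trivial, not nontrivial as your last paragraph says. The only sections beyond the glued chain section are supported on components with $j=j_+(i)$, where $p^{i-1}+p^i$ is a fiber of the pencil $|\mO_{E^i}(p^{i-1}+p^i)|$. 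These extra sections therefore vanish at both nodes of $E^i$, and hence lie in $H^0(L(j)_{\vec d_j}(-p^g))$.
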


\begin{proof}
   We first construct a nonzero element of $S_{j}^X$. By \cref{prop:distribution_properties}, the restriction of $L(j)_{\vec{d}_j}$ to each component is either $\mO_{E^i}$, $\mO_{E^i}(p^{i-1}+p^i)$, or a degree $1$ bundle with no basepoint at $p^{i-1}$ or $p^i$. Thus, let $s_j \in H^0(L(j)_{\vec{d}_j})$ be a section such that its restriction to each component does not vanish at any node, nor at $p^0$ or $p^g$. In particular, $s_j \notin H^0(L(j)_{\vec{d}_j}(-p^{g}))$, so $s_j$ represents a nonzero class in $S_j^X$. Next, for each $1 \leq m \leq n$, let $\theta^{i_m}_{j} \in H^0(L(j)_{\vec{d}_j}^{\vee} \otimes \omega_{X})$ be a section that restricts on $E^{i_m}$ to a nonzero section vanishing at all nodes on $E^{i_m}$ and is identically zero on every other component; this is possible since by \cref{prop:distribution_properties},
\begin{equation*}
L(j)_{\vec{d}_j}^{\vee} \otimes \omega_{X}|_{E^{i_m}} \cong \mO_{E^{i_m}}(p^{i_m-1} + p^{i_m}), 
\end{equation*}
when $1 < i_m < g$, and is isomorphic to $\mO_{E^{1}}(p^1)$ or $\mO_{E^{g}}(p^{g-1})$, when $i_m = 1 \text{ or } g$.
Then $\lambda^m_j := s_j \theta^{i_m}_j$
is in the image of 
\[ S_j^X \otimes H^0(L(j)_{\vec{d}_{j}}^{\vee} \otimes \omega_X) \to H^0(\omega_X)\]
and is nonzero precisely on $E^{i_m}$.

 Now, for $j \neq j' \in [k]$, since $\{i : j_{-}(i) = j\} \cap \{i : j_{-}(i) = j'\} = \varnothing$, we see the elements \[\{\lambda^m_{j}\}_{\substack{1\leq m \leq n_j\\ j\in [k]}} \subseteq \im \Phi^{\vee}_{L(j)_{\vec{d}_j}}\] are linearly independent. So, \cref{prop:distribution_properties}(1, 4), along with \cref{lem:h1_equal,lem:sum_h1_equals_inv}, shows $\Phi^{\vee}_{L(j)_{\vec{d}_j}}$ has rank equal to the dimension of the domain, and thus the result follows by rank-nullity.
\end{proof}

\begin{cor}\label{cor:smooth_open}
   Let $\tau \in \wt{\Sigma}_k$ and let $\mL\in W^{\tau}(C,p,q)^{\circ}$. Then for a general degree $k$, genus $g$ cover $C \to \PP^1$ totally ramified at $p$ and $q$ such that $k$ is the smallest positive integer satisfying $\mO_C(k(p-q)) \cong \mO_C$, the map 
\begin{equation*}\Phi^{\vee}_{\mL}:\bigoplus_{j=0}^{k-1} S_j \otimes H^0(\mL^{\vee}(-D_j)\otimes \omega_C) \to H^0(\omega_C)\end{equation*}
 is injective. Thus, $W^{\tau}(C,p,q)$ is smooth along the open locus $W^{\tau}(C,p,q)^{\circ}$.
\end{cor}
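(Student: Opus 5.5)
The plan is to deduce injectivity on the general fiber from injectivity on the central fiber, using \cref{prop:injective_on_X} together with semicontinuity of rank. Injectivity of a map of vector spaces is an open condition on families in which source and target have constant dimension. So the main work is to put the maps $\Phi^\vee_{\mL^*}$ for $\mL^*\in W^{\tau}(\mathcal{X}^*,\mathfrak{f}_p|_{\mathcal{X}^*},\mathfrak{f}_q|_{\mathcal{X}^*})^\circ$ and the map $\Phi^\vee_{L(j)_{\vec{d}_j}}$ of \cref{prop:injective_on_X} into one family over $B$ (after a finite base change).

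First I would fix $\mL^*$ in the open locus whose limit $L$ lies in a stratum $W^T(X,p^g,p^0)$, with $L$ general in that stratum. This is possible because set-theoretic regeneration shows every stratum lies in the closure of the generic transmission locus. For each $j$ I would take the extension $\mathcal{L}_j$ of $\mL^*(D_j)$ to $\mathcal{X}$ whose central-fiber restriction is $L(j)_{\vec{d}_j}$, with $\vec d_j$ as in \eqref{eq:distribution}. By \cref{prop:distribution_properties}(6), $h^0(L(j)_{\vec d_j}^\vee\otimes\omega_X)=h_\tau(\tau(j)+1,j)$. By Serre duality on the generic fiber, this equals $h^1(\mL^*(D_j))$ because $\tau_{\mL^*}=\tau$. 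Since this $h^1$ is upper semicontinuous and takes the same value on both fibers, $\pi_*(\mathcal{L}_j^\vee\otimes\omega_{\mathcal{X}/B})$ is locally free of that rank, and its formation commutes with base change.

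Second, I would treat the one-dimensional quotients $S_j$ in the same way. On $X$ the section $s_j$ constructed in \cref{prop:injective_on_X} does not vanish at $p^g$, so $S^X_j\neq0$. Since $\dim S_j=1$ generically and $\pi_*\mathcal{L}_j/\pi_*\mathcal{L}_j(-\mathfrak{f}_p)$ injects into the rank-one bundle $\mathfrak{f}_p^*\mathcal{L}_j$ with nonzero special fiber, it forms a line bundle specializing to $S^X_j$. The multiplication maps then glue into a map of vector bundles over $B$,
\[\bigoplus_j \mathcal{S}_j\otimes\pi_*(\mathcal{L}_j^\vee\otimes\omega_{\mathcal{X}/B})\to\pi_*\omega_{\mathcal{X}/B},\]
whose generic fiber is $\Phi^\vee_{\mL^*}$ and whose special fiber is the map shown injective in \cref{prop:injective_on_X}. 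By lower semicontinuity of rank, the generic fiber is injective.

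Third, I would spread out. The locus of $(C,p,q,\mL)$ in the universal open transmission locus where the Petri-type map fails to be injective is closed. Over a general cover $C$, every component of $W^\tau(C,p,q)^\circ$ has dimension $g-\inv_k(\tau)$ by \cref{thm:regeneration} and \eqref{eq:upper_bound}, and it specializes to a union of strata, each of which is reached as above. So no component can lie entirely in the bad locus. I expect an argument using \eqref{eq:upper_bound} (the bad locus has strictly smaller dimension, since its specialization avoids the general points of every stratum) to give injectivity at every point. Smoothness then follows because the tangent space $\ker\Phi_{\mL}$ has dimension $g-\inv_k(\tau)=\dim W^\tau$.

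The main obstacle is the base-change compatibility of the step-two vector bundles, and in particular getting \emph{every} point of the open locus rather than only a general one. If the naive dimension argument fails there, I would instead redo steps one and two for an arbitrary point of the special fiber whose type is exactly $\tau$, by checking that \cref{prop:distribution_properties} only uses the stratum data $L^i$ for $i$ with $m_i\neq\bullet$.
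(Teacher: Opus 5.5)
Your Steps 1--2 are the paper's argument. The paper takes the limit of the given $\mL$ with the distributions $\vec d_j$ of \eqref{eq:distribution} and deduces injectivity from \cref{prop:injective_on_X} via a commuting square whose vertical maps specialize sections. Your version, with locally free pushforwards over $B$ and lower semicontinuity of rank, is a more careful form of that square. The paper has no spreading-out step: it applies \cref{prop:distribution_properties,prop:injective_on_X} directly to the limit of an arbitrary $\mL\in W^{\tau}(C,p,q)^{\circ}$.

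The gap is your Step 3, and neither proposed fix works.

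\textbf{The dimension count.} Bounding the bad locus $Z\subset W^{\tau}(C,p,q)^{\circ}$ by $g-\inv_k(\tau)-1$ only shows that $Z$ contains no component, i.e.\ generic smoothness. It does not show $Z=\varnothing$.

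\textbf{The fallback.} The fallback is false. At an index $i$ with $m_i=\bullet$, \cref{prop:distribution_properties}(5) needs $L^i$ to avoid finitely many special bundles. This genuinely fails for some limits of points of the open locus, even limits of exact type $\tau$ on $X$. Take $k=2$, $g=2$, $\tau=\sigma^2_0$.
\begin{enumerate}
\item By \cref{prop:swap}, $W^{\tau}(C,p,q)=\{\mO_C(q+x):x\in C\}\cong C$, which is smooth.
\item $W^{\tau}(X,p^2,p^0)$ is the union of $\{L^1\cong\mO_{E^1}(p^0+p^1)\}$ and $\{L^2\cong\mO_{E^2}(2p^1)\}$. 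These are two elliptic curves meeting at one point $L_0$.
\item $L_0$ is the limit of $\mO_C(q+x)$ as $x$ specializes to the node, after a finite base change. It lies in no $W^{\tau'}(X,p^2,p^0)$ with $\tau'>\tau$; one checks the two length-$2$ candidates directly.
\item For $T=\sigma^2_\bullet\sigma^2_0$ one gets $L(1)_{\vec d_1}|_{E^2}\cong\mO_{E^2}(p^1)$. Then $S^X_1$ is spanned by the class of a section supported on $E^2$, and $H^0(L(1)_{\vec d_1}^{\vee}\otimes\omega_X)$ is spanned by a section supported on $E^1$. So the central-fiber map is zero on its one-dimensional domain.
\item For the other word $T=\sigma^2_0\sigma^2_\bullet$ one gets $S^X_1=0$ instead, so the domain collapses.
\end{enumerate}
Since $W^{\tau}(X,p^2,p^0)$ is singular at $L_0$, no argument that only inspects the limit of the given $\mL$ with fixed twists can reach such points. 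They occur wherever two strata $W^{T,S}$ meet.

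Covering every point of the open locus needs a genuinely new ingredient. One option is an Eisenbud--Harris-type analysis of the limit of a putative kernel element, using twists adapted to that element rather than the fixed $\vec d_j$. Another is a local study of the relative transmission locus near intersections of strata.

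The paper does not supply this either. Part (5) of \cref{prop:distribution_properties} is justified via \cref{lem:existence_of_lls}, which concerns a general limit line bundle in $W^{T}(X,p^g,p^0)$. So your Steps 1--2, like the paper's argument, establish injectivity only at those $\mL$ whose limit is general in a stratum.
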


\begin{proof}
   For ease of notation, let $(C,p,q)$ denote the generic fiber of our family $\mathcal{X}$. Let $L(j)_{\vec{d}_j}$ denote the limit of $L(D_j)$ on $X$ with the degree distribution $\vec{d}_j$ defined in \eqref{eq:distribution}. Then, we have the following commuting square, where the vertical arrows are induced by the usual (injective) operation of limiting sections to the central fiber:
\[\begin{tikzcd}
	{\bigoplus_{j=0}^{k-1} S_j \otimes H^0(\mL^{\vee}(-D_j)\otimes \omega_C)} & {H^0(\omega_C)} \\
	{ \bigoplus_{j=0}^{k-1} S_j^X \otimes H^0(L(j)_{\vec{d}_j}^{\vee}\otimes \omega_X)} & {H^0(\omega_X)}
	\arrow["{\Phi^{\vee}_{\mL}}", from=1-1, to=1-2]
	\arrow[hook, from=1-1, to=2-1]
	\arrow[hook, from=1-2, to=2-2]
	\arrow["{\Phi^{\vee}_{L(j)_{\vec{d}_j}}}", from=2-1, to=2-2]
\end{tikzcd}\]
    By \cref{prop:injective_on_X}, $\Phi^{\vee}_{L(j)_{\vec{d}_j}}$ is injective, and so $\Phi^{\vee}_{\mL}$ must be injective too.
\end{proof}

 We next show that $W^{\tau}(C,p,q)$ is also smooth along codimension 1 loci. In order to do this, we characterize codimension 1 permutations of a given $\tau$.

\begin{lem}\label{lem:codim1}
     Let $\tau, \tau' \in \wt{\Sigma}_k$. Suppose $\tau' > \tau, \chi_{\tau'} = \chi_{\tau}$, and $\inv_{k}(\tau') = \inv_{k}(\tau)+1$. Then, there exist $j_{-}, j_{+} \in [k]$ and $\Delta > 0 \in \ZZ$ such that 
     \begin{itemize}    
         \item  $\tau'(j) = \tau(j)$ for all $j  \in [k]\setminus\{j_{\pm}\}$,
         \item $\tau'(j_{\pm}) = \tau(j_{\pm}) \pm \Delta$,  
         \item $\Delta \equiv \tau(j_{-}) - \tau(j_{+}) \pmod{k}$.
     \end{itemize}
     If $j_{-} < j_{+}$, then $\tau(j_{-}) < \tau(j_{+})$ and $\Delta < k$. If $j_{-} > j_{+}$, then $\Delta = \tau(j_{-}) - \tau(j_{+})$. 
     
     Further, we have the following equalities:
\begin{equation}\label{eq:constant}
s_{\tau}(\tau(j)+1,j)=s_{\tau'}(\tau(j)+1,j) \text{ for  every $j \in [k]$},
\end{equation} 
\begin{equation}\label{eq:increase}
    s_{\tau'}(\tau(j)+1, j) = \begin{cases}
        s_{\tau'}(\tau(j)+1, j+1)+1 & \text{ if } j_{-} < j_{+} \text{ and } j \in [k] \setminus\{j_{+}\}\\
        s_{\tau'}(\tau(j), j)+1 & \text{ if } j_{-} > j_{+} \text{ and } j \in [k] \setminus\{j_{-}\},
    \end{cases}
\end{equation}
and 
\begin{equation}\label{eq:constant2}
    s_{\tau}(\tau'(j_+)+1, j_{+}) =  s_{\tau'}(\tau'(j_+)+1, j_{+}).
\end{equation}
 \end{lem}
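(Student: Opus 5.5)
The approach is to use the standard description of covers in Bruhat order for affine Weyl groups. Since $\chi_{\tau'}=\chi_\tau$, we may shift both permutations into $\Sigma_k$. There, $\tau<\tau'$ together with $\inv_k(\tau')=\inv_k(\tau)+1$ says that $\tau'$ covers $\tau$. By the subword and exchange property (e.g.\ \cite{bjorner2006combinatorics}), this gives $\tau'=\tau\cdot t$ for an affine reflection $t$. An affine reflection swaps two residue classes $a,b\pmod k$ by a translation: $t(a+nk)=b'+nk$ and $t(b'+nk)=a+nk$, with $b'\equiv b$. Composing on the right with $\tau$ therefore alters exactly two window entries, $j_\pm$, and changes them by $\pm\Delta$ with the same $\Delta$, because the window sum is fixed by \eqref{eq:normalization}. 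The congruence $\Delta\equiv\tau(j_-)-\tau(j_+)\pmod k$ holds because $\tau'$ must again be a bijection. Concretely, the set of values $\{\tau'(j_\pm)\}$ must be congruent modulo $k$ to $\{\tau(j_\pm)\}$ with the residues swapped, so $\tau(j_+)+\Delta\equiv\tau(j_-)$.

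Next I would pin down the finer constraints. I would use the pairwise count $\inv_k(\alpha,j_1,j_2)$ from \eqref{eq:inv}, as in the proof of \cref{prop:increasing_decreasing}, and compute the change in $\inv_k$ over all pairs. Pairs disjoint from $\{j_\pm\}$ contribute nothing. For each $j\notin\{j_\pm\}$, the two pairs $(j,j_+)$ and $(j,j_-)$ together contribute a nonnegative amount, since the two values move in opposite directions by the same $\Delta$. This contribution is strictly positive whenever some value $\tau(j)+nk$ lies strictly between the old and new values. The pair $(j_-,j_+)$ itself contributes according to \eqref{eq:inv}. Requiring the total change to be exactly $1$ (and $\tau'>\tau$ rather than $<$) then forces two things. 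First, the pair contributes $+1$. Second, no $\tau(j)+nk$ lies in the intervals swept out, and this interval emptiness is exactly what the later slipface identities need. The case split is as follows. If $j_->j_+$, the pair count increases by one only when the values exactly exchange, giving $\Delta=\tau(j_-)-\tau(j_+)$. If $j_-<j_+$, the ordering term forces $\tau(j_-)<\tau(j_+)$ and the floor term forces $\Delta<k$.

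For \eqref{eq:constant}, \eqref{eq:increase} and \eqref{eq:constant2}, I would compute directly from $s(a,b)=\#\{n\ge b:\tau(n)<a\}$. In each displayed identity, the two sides differ only through the contributions of the residue classes $j_\pm$. For those classes, the counts coincide precisely because of the interval-emptiness conclusion and the bounds on $\Delta$. For \eqref{eq:increase}, passing from $b=j$ to $b=j+1$ (respectively from $a=\tau(j)+1$ to $a=\tau(j)$) removes exactly the single element $n=j$ with $\tau'(j)=\tau(j)$. This holds when $j$ is not the moved index, and it is why the index $j_+$ (respectively $j_-$) is excluded.

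I expect the main obstacle to be the first step: justifying rigorously that a codimension-one cover in the extended affine group changes exactly two residue classes, by a reflection in the right position. If quoting the Bruhat cover theory is not acceptable, my fallback is a direct argument from \cref{lem:ess_set}. I would locate an essential-set point where $s_{\tau'}>s_\tau$, then construct the intermediate $\tau\cdot t$ and check $\tau<\tau t\le\tau'$. The length count then forces equality.
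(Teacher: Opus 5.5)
\textbf{Verdict: genuine gap, and it cannot be filled.} Your first step matches the paper's route in different language: deleting one letter from a reduced word of $\tau'+\chi_{\tau'}$ is the same as writing $\tau'=\tau t$ for an affine reflection $t$. Your window, sum and congruence bookkeeping is fine. The problem is the case analysis in your second paragraph, because the statement's two sentences ``If $j_-<j_+$\dots'' and ``If $j_->j_+$\dots'' are false.

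Normalize $t$ to exchange $a+nk\leftrightarrow b+nk$ for all $n$, with $a=j_+<b$ and $b\equiv j_-$. Then $\tau(a)<\tau(b)$ and $\Delta=\tau(b)-\tau(a)$, and ``exact exchange'' means $b=j_-$. Now do your pair count carefully with $L=b-a$. The number of $k$-inversions between the classes of $j_\pm$ goes from $|\lfloor L/k\rfloor-\lfloor\Delta/k\rfloor|$ to $\lfloor L/k\rfloor+\lfloor\Delta/k\rfloor+1$, a change of $2\min(\lfloor L/k\rfloor,\lfloor\Delta/k\rfloor)+1$. So it rises by exactly one iff $L<k$ or $\Delta<k$, regardless of whether $j_-<j_+$.

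Concretely, in $\wt{\Sigma}_2$ let $\tau=(1,0)=\sigma^2_0$.
\begin{itemize}
\item For $\tau'=(2,-1)=\sigma^2_1\sigma^2_0$ you get $j_+=0<j_-=1$, yet $\Delta=1\neq\tau(j_-)-\tau(j_+)=-1$.
\item For $\tau'=(-2,3)=\sigma^2_0\sigma^2_1$ you get $j_-=0<j_+=1$, yet $\tau(j_-)>\tau(j_+)$ and $\Delta=3>k$.
\end{itemize}
Both pairs satisfy every hypothesis: shift $0$, $\inv_2$ goes from $1$ to $2$, and $\tau\le\tau'$ (check on $\Ess(\tau)=\{(2n+1,2n+1)\}$ via \cref{lem:ess_set}). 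The paper's own proof fails at the same spot. It uses the ``if'' direction of \cref{prop:increasing_decreasing}(2), which already fails for $T=\sigma^2_1\sigma^2_0$ at $t_0=2$.

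\textbf{Second gap: ``interval emptiness'' is false, so your route to the identities rests on a false step.} For $j\notin\{j_\pm\}$, the combined change over the pairs $(j,j_\pm)$ is $2\#\{c\equiv j:\ a<c<b,\ \tau(a)<\tau(c)<\tau(b)\}$. This is a joint condition on positions and values, not on values alone. For example, take $k=3$, $\tau=(2,1,0)$, $\tau'=(4,-1,0)$, a cover with $a=0$, $b=4$. The value $\tau(2)+3=3$ lies strictly between $\tau(j_+)=2$ and $\tau'(j_+)=4$, yet class $2$ contributes nothing.

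What is true is
\[
\inv_k(\tau')-\inv_k(\tau)=1+2\#\{c\in\ZZ:\ a<c<b,\ \tau(a)<\tau(c)<\tau(b)\}.
\]
The points $c=a+k$ and $c=b-k$ account for the $2\min$ term. So a cover means exactly that this rectangle is empty, and that is enough for \eqref{eq:constant}--\eqref{eq:constant2}, which do hold.

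To see this, note that for $\alpha\in\{\tau,\tau'\}$, the count $s_\alpha(v+1,y)=\#\{n\ge y:\alpha(n)\le v\}$ changes only through exchanged pairs with $a+nk<y\le b+nk$. Each such pair changes it by $[\tau(a)+nk\le v<\tau(b)+nk]$.
\begin{itemize}
\item In \eqref{eq:constant} with $j\notin\{j_\pm\}$, a nonzero term puts $j-nk$ in the rectangle.
\item For $j=j_+$, and for $j=j_-$ after shifting to $(y,v)=(b,\tau(b))$ by $k$-periodicity, a nonzero term forces $L>k$ and $\Delta>k$. That puts $a+k$ in the rectangle.
\item In \eqref{eq:constant2}, $(y,v)=(a,\tau(b))$ forces $n\le -1$, where the indicator vanishes.
\item \eqref{eq:increase} needs only $\tau'(j_-)<\tau(j_-)$ and $\tau'^{-1}(\tau(j_+))=b>j_+$.
\end{itemize}
So replace the two case sentences by ``$b-a<k$ or $\Delta<k$'', and derive the identities from rectangle emptiness.
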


\begin{proof}
  Let $T' = \sigma^k_{m_\ell}\dots \sigma^{k}_{m_1}$ be a reduced word for $\tau' + \chi_{\tau'}$. Then, there exists $1 \leq n \leq \ell$ such that $T \coloneqq \sigma^k_{m_{\ell}} \dots \sigma^k_{m_{n+1}}\sigma^k_{m_{n-1}}\dots \sigma^k_{m_1}$ is a reduced word for $\tau + \chi_{\tau}$ \cite[Cor.\ 2.2.3]{bjorner2006combinatorics}. Let $j_{-} = j_{-}(n)$ and $j_{+} = j_{+}(n)$. Since $T^{\leq n}(j) = T^{\leq n-1}(j)$ for all $j \in [k]\setminus\{j_{-}, j_{+}\}$, it follows that $\tau'(j) = \tau(j)$ for all $j  \in [k] \setminus\{j_{-},j_{+}\}$. Further, because $\sigma_{m_n}^k$ exchanges the residue classes at $j_{-}$ and $j_{+}$ and $\chi_{\tau} = \chi_{\tau'}$ (equivalent to preserving the sum of $\tau$ over the window $0,\dots,k-1$), the existence of the claimed $\Delta \in \ZZ$ follows. We will show that in fact $\Delta > 0$ below.

    Now, suppose $j_{-} < j_{+}$. Then it must be that $\tau(j_{-}) < \tau(j_{+})$, because by \cref{prop:increasing_decreasing}(1,2), $T^{\leq n-1}(j_{+})-T^{\leq n-1}(j_{-}) \geq k-1 > 0$, and \cref{prop:increasing_decreasing}(3) implies that  $\sigma_{m_{n'}}^k$ for $n' > n$ can only increase that difference. We next see that the addition of $\sigma_n^k$ to $T$ simply exchanges all the inversions $j_{-}$ and $j_{+}$ are involved in after the $n$-th step, and so in particular, the number of inversions between the indices $j_{-}$ and $j_{+}$ increases exactly by 1 when we go from $\tau$ to $\tau'$. Thus, $0 < 2\Delta < 2k$ or $0 < \Delta < k$. 

    Lastly, by \cref{prop:increasing_decreasing}, if $j_{+} < j_{-}$, then
     \begin{equation*}
         T^{\leq n}(j_{+}) = T^{\leq n}(j_{-}) + 1 = T^{\leq n-1}(j_{-})
     \end{equation*}
     and similarly, $T^{\leq n}(j_{-}) = T^{\leq n-1}(j_{+})$. In other words, the effect of $\sigma_{m_n}^{k}$ is to swap the $j_{-}$ and $j_{+}$-th entries of $\tau$, corresponding to $\Delta = \tau(j_{-}) - \tau(j_{+})$. Moreover, by \cref{prop:increasing_decreasing}(3), we must have $\tau'(j_{+}) > \tau'(j_{-})$, which is equivalent to $\Delta > 0$.

We now prove the claim \eqref{eq:constant}. For $j_1,j_2\in [k]$, define
\begin{equation*}
    i_{k}(\alpha, j_1, j_2) \coloneqq  \max
\Big\{\Big\lfloor{\frac{\alpha(j_1)-\alpha(j_2)}{k}\Big\rfloor},0\Big\} + \mathbbm{1}_{j_1 < j_2}\cdot \mathbbm{1}_{\alpha(j_1) > \alpha(j_2)},
\end{equation*}
which counts the number of $k$-inversion pairs congruent to $(j_1,j_2)$ modulo $k$.
Fix $j \in [k]\setminus\{j_{-},j_{+}\}$. Then, 
\begin{equation*}
    s_{\tau}(\tau(j)+1,j) =
1+\sum_{j'\in [k]\setminus\{j\}} i_k(\tau,j,j').
\end{equation*}
and similarly
\begin{equation*}
    s_{\tau'}(\tau(j)+1,j)=s_{\tau'}(\tau'(j)+1,j)
=
1+\sum_{j'\in [k]\setminus\{j\}} i_k(\tau',j,j').
\end{equation*}

Now for every $j'\notin\{j_{-},j_{+}\}$, we have $\tau'(j')=\tau(j')$ and so $
i_k(\tau,j,j')=i_k(\tau',j,j')$. Therefore, it suffices to see that 
\begin{equation*}
    i_k(\tau,j,j_{-})+i_k(\tau,j,j_{+}) = i_k(\tau',j,j_{-})+i_k(\tau',j,j_{+}),
\end{equation*}
but this follows from the fact that the addition of $\sigma_{m_n}^{k}$ to $T$ exchanges all the inversions $j_{-}$ and $j_{+}$ are involved in after the $n$-th step, and so keeping the combined number of $k$-inversions preserved.

To prove \eqref{eq:constant} for $j \in \{j_{-},j_{+}\}$, it suffices to show that 
\begin{align*}
1+\max\Big\{\Big\lfloor{\frac{\tau(j_{\pm})-\tau(j_{\mp})}{k}\Big\rfloor}+\mathbbm{1}_{j_{\mp} > j_{\pm}},0\Big\} = &  \max  \Big\{\Big\lfloor{\frac{\tau(j_{\pm})-\tau'(j_{\pm})}{k}\Big\rfloor}+1,0\Big\}\\
&+\max\Big\{\Big\lfloor{\frac{\tau(j_{\pm})-\tau'(j_{\mp})}{k}\Big\rfloor}+\mathbbm{1}_{j_{\mp} > j_{\pm}},0\Big\},
\end{align*}
which can be verified directly on a case-by-case basis.

For \eqref{eq:increase}, first suppose that $j_{-} < j_{+}$. Then, the claimed equality is equivalent to $\tau'(j) \leq \tau(j)$ for $j \in [k]\setminus \{j_+\}$, which is true since $\tau'(j_{-}) = \tau(j_{-})- \Delta < \tau(j_-)$ and $\tau'(j) = \tau(j)$ for all $j \in [k]\setminus\{j_\pm\}$. Else if $j_{+} < j_{-}$, then the claimed equality is equivalent to $\tau'^{-1}(\tau(j)) \geq j$, which is true since $\tau'^{-1}(\tau(j_{+})) = j_{-} \geq j_{+}$.

Finally, for \eqref{eq:constant2}, it suffices to check
\begin{align*}
1 + \max\Big\{\Big\lfloor{\frac{\tau'(j_{+})-\tau'(j_{-})}{k}\Big\rfloor}+\mathbbm{1}_{j_{-} > j_{+}},0\Big\}  = & \max\Big\{\Big\lfloor{\frac{\tau'(j_{+})-\tau(j_{+})}{k}\Big\rfloor}+1,0\Big\} \\
&+ \max\Big\{\Big\lfloor{\frac{\tau'(j_{+})-\tau(j_{-})}{k}\Big\rfloor}+\mathbbm{1}_{j_{-} > j_{+}},0\Big\}. 
\end{align*}
which is again easily verified.
\end{proof}

\begin{thm}\label{thm:smooth}
Let $\tau \in \wt{\Sigma}_k$ and let $C$ be a general degree $k$, genus $g$ cover of $\PP^1$ totally ramified at two points $p$ and $q$ such that $k$ is the smallest positive integer satisfying $\mO_C(k(p-q)) \cong \mO_C$. Then $W^{\tau}(C,p,q)$ is smooth along transmission loci of codimension $1$ inside $W^{\tau}(C,p,q)$.
\end{thm}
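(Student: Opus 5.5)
We plan to follow the strategy announced in the introduction: show that each codimension-one transmission locus $W^{\tau'}(C,p,q)\subset W^{\tau}(C,p,q)$ is locally cut out inside $W^{\tau}(C,p,q)$ by a single equation near a point $\mL$ with $\tau_{\mL}=\tau'$, and that $W^{\tau'}(C,p,q)$ is itself smooth there. Since a Cartier divisor that is smooth forces the ambient scheme to be smooth at that point, this gives the theorem. Concretely, fix $\tau'$ with $\tau'>\tau$, $\chi_{\tau'}=\chi_\tau$ and $\inv_k(\tau')=\inv_k(\tau)+1$, and let $\mL\in W^{\tau'}(C,p,q)^{\circ}$. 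By \cref{cor:smooth_open} applied to $\tau'$, $W^{\tau'}(C,p,q)$ is smooth at $\mL$ of dimension $g-\inv_k(\tau)-1$. Points of $W^{\tau}$ lying in higher-codimension loci are excluded by the hypothesis, so only such $\mL$ need treatment.

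The key step is the local description. Using \cref{lem:codim1}, we get indices $j_\pm$ and a shift $\Delta$. By \eqref{eq:constant}, the rank conditions at the twists $(\tau(j)+1,j)$ are the same for $\tau$ and $\tau'$. By \eqref{eq:increase}, for the indices $j\ne j_{\pm}$ the ``new section'' space $S_j=H^0(\mL(D_j))/H^0(\mL(D_j-p))$ (respectively its $q$-analogue) stays one-dimensional near $\mL$ on all of $W^{\tau}$. By \eqref{eq:constant2}, the only condition distinguishing $W^{\tau'}$ from $W^\tau$ is the single jump at the twist $\tau'(j_+)p-j_+q$: the condition that $h^0$ of that twist exceeds its $\tau$-value by one.

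We then realize this locally as one equation. On a neighborhood $U$ of $\mL$ in $W^{\tau}$, we represent $R\pi_*$ of the relevant twist by a two-term complex. The rank of $H^0$ is generically $s_\tau(\tau'(j_+)+1,j_+)$, and the $\tau$-conditions already force a subbundle of that rank (built from the multiplication maps of the non-colliding sections, as in \cref{prop:non-colliding}) into the kernel. The jump locus is then the vanishing locus of one induced map from a line bundle to a line bundle. Concretely, the determinantal condition reduces to the degeneracy of a $1\times 1$ map, because the extra section must come from the essential twist of $j_-$ multiplied by $\eta_p^{\Delta}$. This shows $W^{\tau'}\cap U$ is cut out by one equation, hence is Cartier, provided it is a proper subset of each component of $U$. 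Properness follows from the dimension count $g-\inv_k(\tau)$ versus $g-\inv_k(\tau)-1$, using \cref{thm:main}(1) via regeneration and the upper bound \eqref{eq:upper_bound}.

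Finally, since $W^{\tau}$ is Cohen--Macaulay (\cref{thm:cm}) of pure dimension $g-\inv_k(\tau)$, and $\mL$ lies on an effective Cartier divisor that is smooth at $\mL$, the local ring $\mO_{W^\tau,\mL}$ modulo a nonzerodivisor is regular, so $\mO_{W^\tau,\mL}$ is regular. The main obstacle is the middle step: verifying scheme-theoretically that the defining ideal of $W^{\tau'}$ in $W^{\tau}$ is generated by one element, rather than merely set-theoretically. If the direct argument is messy, the first fallback is to check it on the central fiber $X$. There, using the stratification \eqref{eq:stratification}, a codimension-one degeneration corresponds to fixing one additional aspect $L^i$ on some $E^i$, which is visibly a single equation $L^i\cong\mO_{E^i}(\ast)$ on the elliptic factor. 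We would then spread this out using flatness and reducedness of the limits.
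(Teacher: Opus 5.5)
Your outer framework matches the paper's: remove the other transmission loci, show that $Y=W^{\tau'}(C,p,q)^{\circ}$ is a Cartier divisor in what remains, and combine this with the smoothness of $Y$ from \cref{cor:smooth_open}. Your ``regular modulo a nonzerodivisor'' step is an acceptable substitute for the paper's tangent-space count. The gap is the middle step, which is the whole content of the theorem, and it rests on a misreading of \eqref{eq:constant2}.

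That identity says $s_{\tau}(\tau'(j_+)+1,j_+)=s_{\tau'}(\tau'(j_+)+1,j_+)$. So $h^0$ of the twist $\tau'(j_+)p-j_+q$ does \emph{not} jump along $Y$: $\pi_*\mP(D'_+)$ is locally free of constant rank on $U$, and there is no jump locus there to cut out. The $h^0$-jumps happen at other twists; in \cref{ex:codim1}, $h^0(\mL(5p-q))$ is $1$ for type $\tau$ and $2$ for type $\tau'$. A jump presented by a two-term complex $K^0\to K^1$ with a rank-$s$ bundle $V$ mapping to the kernel is the degeneracy locus of $K^0/V\to K^1$. That locus has expected codimension $h^1+1$ and is cut out by many minors, not by one equation.

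Your ``subbundle built from the multiplication maps'' is also not a subbundle across $Y$, since its injectivity is exactly what fails there. Relatedly, \eqref{eq:increase} makes $\pi_*\mP(D_j)/\pi_*\mP(D_j-t)$ a line bundle on $U$ for every $j\neq j_\epsilon$, not for $j\neq j_\pm$. Here $t=q$ and $j_\epsilon=j_+$ if $j_-<j_+$, while $t=p$ and $j_\epsilon=j_-$ otherwise. At $j_\epsilon$ this quotient jumps on $Y$, and your proposal never produces a line bundle carrying the new section there.

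The missing idea is to define $\mathcal S_{j_\epsilon}$ instead as the cokernel of the multiplication map $\zeta$ from the lower twists into $\pi_*\mP(D_{j_\epsilon})$. This is a line bundle on all of $U$ by a rank count: $\pi_*\mP(D_{j_\epsilon})$ has constant rank by \eqref{eq:constant}, and $\zeta$ has corank one. Off $Y$ it is generated by the class of a section not vanishing at $t$, while over $Y$ every section of $\mL(D_{j_\epsilon})$ vanishes at $t$.

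One then uses $D'_+$ precisely \emph{because} its $h^0$ is constant. The map $\psi\colon\mathcal V=\bigoplus_{j\in I_{+'}}\mathcal S_j\otimes H^0(\mO_{\PP^1}(q_{j,j_+'}))\to\pi_*\mP(D'_+)$ goes between bundles of the same rank $s_\tau(\tau'(j_+)+1,j_+)$; this is the actual role of \eqref{eq:constant2}. It is an isomorphism off $Y$, by comparing vanishing orders at $t$ modulo $k$. It is not surjective over $Y$: all image sections vanish at $t$, while $s_{\tau'}(\tau'(j_+)+1,j_+)=s_{\tau'}(\tau'(j_+),j_+)+1=s_{\tau'}(\tau'(j_+)+1,j_++1)+1$ supplies a section of $\pi_*\mP(D'_+)$ not vanishing at $t$. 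Hence $Y=V(\det\psi)$. The ``$1\times 1$ map'' you want is $\det\psi\colon\det\mathcal V\to\det\pi_*\mP(D'_+)$, which in substance records the vanishing at $t$ of the generator of $\mathcal S_{j_\epsilon}$; it is not an $h^0$-jump.

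Your fallback does not close the gap either. On $X$ each $W^{T',S'}$ is one extra aspect condition inside a single $W^{T,S}$, but $W^{\tau}(X)$ is a reducible union of such products. Local principality of the union of these loci where several $W^{T,S}$ meet is not automatic. Transporting a local generator to the generic fiber would also need flatness of the closure of $W^{\tau'}$ over $B$ and a Nakayama argument, neither of which you have set up.
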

\begin{proof}
    Let $\tau'\in \wt{\Sigma}_k$ be such that $\tau' > \tau$ with $\inv_{k}(\tau') = \inv_{k}(\tau)+1$ and $\chi_{\tau'} = \chi_{\tau}$. We need to show that $W^{\tau}(C,p,q)$ is smooth along $W^{\tau'}(C,p,q)^{\circ}$. Let 
    \begin{equation*}
        Z\coloneqq \bigcup_{\substack{\tau'' > \tau\\
        \tau'' \neq \tau'\\ \chi_{\tau''} = \chi_{\tau}}} W^{\tau''}(C,p,q).
    \end{equation*}
    It suffices to show that $Y \coloneqq W^{\tau'}(C,p,q)^{\circ}$ is a Cartier divisor in $U \coloneqq {W^{\tau}}(C,p,q)\setminus Z$. Indeed, if $W^{\tau}(C,p,q)$ was singular at some $L \in Y$, then $\dim T_{L} W^{\tau}(C,p,q) > \dim  W^{\tau}(C,p,q) = \dim Y + 1$. Assuming $Y$ is Cartier in $U$, we would have \begin{equation*}
        \dim T_{L} Y \geq \dim T_{L} W^{\tau}(C,p,q)-1> \dim Y,
    \end{equation*}
contradicting that $Y$ is smooth, as we showed in \cref{cor:smooth_open}. 

To show $Y \subset U$ is Cartier, we will construct a map of equal rank vector bundles that drops rank exactly over $Y$, thus letting us cut out $Y$ by the vanishing of a single section of a line bundle. In order to assert that various pushforwards are vector bundles, we will appeal to \cref{lem:codim1} and Grauert's Theorem on several occasions. The central idea of the following argument can also be gleaned from \cref{ex:codim1}. 

From \cref{lem:codim1}, we know there exist indices $j_{-}$ and $j_{+}$ such that the windows for $\tau$ and $\tau'$ only differ at $j_{\pm}$.  Let $\mathcal{P}$ be a Poincar\'e bundle on $C\times U$ and let $\pi: C\times U \to U$ be the projection. Denote by $D_j$ the pullback of $\tau(j)\cdot p - j\cdot q$ to $C\times U$; let $t$ be the pullback of $q$ if $j_{-} < j_{+}$ and let it be the pullback of $p$ otherwise. Let $j_{\epsilon} = j_{+}$ if $j_{-} < j_{+}$ and let it be $j_{-}$ otherwise. 
Then, by \cref{lem:codim1}(\eqref{eq:constant},\eqref{eq:increase}), we see for $j \in [k]\setminus\{j_{\epsilon}\}$ that $\pi_*\mP(D_j)$ and $\pi_*\mP(D_j-t)$ are both vector bundles, and there is a natural inclusion $\pi_*\mP(D_j-t) \hookrightarrow \pi_*\mP(D_j)$ on each fiber, so $\mathcal{S}_j \coloneqq \pi_{*}\mP(D_j)/\pi_*\mP(D_j-t)$ is a line bundle over $U$. To define $\mathcal{S}_{j_{\epsilon}}$, let 
\[I_{\epsilon} \coloneqq \{j\in [k]\setminus\{j_{\epsilon}\}: (\tau(j), -j) \prec (\tau(j_{\epsilon}),-j_{\epsilon})\},\]
and let $q_{j,j_{\epsilon}}, r_{j,j_{\epsilon}}, \text{ and } s_{j,j_{\epsilon}}$ be nonnegative integers as in \cref{dfn:ess_twists}. Let $\eta$ be the constant section of $\mO_{C}(p)$ and $\beta$ the constant section of $\mO_{C}(q)$. Then, we have a map of vector bundles 
\begin{equation*}
  \zeta:  \bigoplus_{j\in I_{\epsilon}} \mathcal{S}_{j}\otimes H^0(\PP^1, \mO_{\PP^1}(q_{j,j_{\epsilon}})) \xr{\oplus_{j\in I_{\epsilon}}\cdot \eta^{r_{j,j_{\epsilon}}}\cdot \beta^{s_{j,j_{\epsilon}}}} \pi_{*} \mP(D_{j_{\epsilon}}),
\end{equation*}
which is injective by comparing orders of vanishing modulo $k$ at $t$ since both $\tau(j) \not \equiv \tau(j')$ and $j \not \equiv j'$ modulo $k$ for $j\neq j' \in I_{\epsilon}$. Thus $\zeta$ has rank 
\begin{equation*}
   \sum_{j \in I_{\epsilon}}(q_{j,j_{\epsilon}}+1) = s_{\tau}(\tau(j_{\epsilon})+1,j_{\epsilon})-1= \rk \pi_*\mP(D_{j_{\epsilon}}) - 1,
\end{equation*}
and so $\mathcal{S}_{j_{\epsilon}}\coloneqq \coker \zeta$ is a line bundle over $U$. Away from $Y$, $\mathcal{S}_{j_{\epsilon}} = \pi_{*}\mP(D_{j_{\epsilon}})/\pi_{*}\mP(D_{j_{\epsilon}}-t)$, so the section corresponding to $\mathcal{S}_{j_{\epsilon}}$ does not vanish at $t$ away from $Y$, while it does vanish at $t$ over $Y$ since $\pi_{*}\mP(D_{j_{\epsilon}})|_{Y}\cong\pi_{*}\mP(D_{j_{\epsilon}}-t)|_Y$.

Let $D_{+}'$ be the pullback of $\tau'(j_{+})\cdot p - j_{+} \cdot q$ and let
\[I_{+'}  \coloneqq \{j\in [k]: (\tau(j), -j) \prec (\tau'(j_{+}),-j_{+})\}.\]
Notice that $j_{-}, j_{+} \in I_{+'}$. For each $j \in I_{+'}$, let $q_{j,j_{+}'}, r_{j,j_{+}'}, \text{ and } s_{j,j_{+}'}$ be nonnegative integers as in \cref{dfn:ess_twists}. Then, we have a map 
\begin{equation*}
    \psi: \mathcal{V} \coloneqq \bigoplus_{j \in I_{+'}} \mathcal{S}_j \otimes H^0(\PP^1, \mO_{\PP^1}(q_{j,j_+'})) \xr{\oplus_{j\in I_{+'}}\cdot \eta^{r_{j,j_{+}'}}\cdot \beta^{s_{j,j_{+}'
    }}}  \pi_* \mP(D_+'). 
\end{equation*}
between vector bundles of equal rank $r = s_{\tau}(\tau'(j_+)+1,j_{+})$ by \cref{lem:codim1}\eqref{eq:constant2}. Away from $Y$, $\psi$ is injective by comparing orders of vanishing modulo $k$ at $t$, while that argument fails over $Y$ because the image of the section under $\psi$ coming from $\mathcal{S}_{j_{\epsilon}}$ vanishes at $t$. Since
\[s_{\tau'}(\tau'(j_+)+1, j_+) = s_{\tau'}(\tau'(j_+), j_+) + 1 = s_{\tau'}(\tau'(j_+)+1, j_{+} +1)+ 1, \]
we know $\pi_{*}\mP(D'_{+})$ contains a section not vanishing at $t$, but every section in the image of $\psi$ over $Y$ vanishes at $t$. Thus, $\psi$ fails to be surjective exactly over $Y$ and so $Y$ is cut out by the vanishing of $\det \psi$, which is a section of the line bundle
\begin{equation*}
    \bigwedge^{r} \mathcal{V}^{\vee} \otimes  \bigwedge^{r} \pi_* \mP(D'_{+}),
\end{equation*}
thus proving $Y \subset U$ is Cartier.
\end{proof}

We illustrate the proof of \cref{thm:smooth} in the following example.
\begin{example}\label{ex:codim1}
    Consider $\tau = (0,7,11)$ and $\tau' = (0,5,13)$ in $\wt{\Sigma}_3$. Note that $\chi_{\tau'} = \chi_{\tau}$ and $\inv_{k}(\tau') = 7 = \inv_{k}(\tau) + 1$. For this example, $j_{-} = 1$, $j_{+} = 2$, $\Delta = 2$, $t = q$, and $j_\epsilon = 2 $. The fibers of $\mathcal{S}_{0}$ and $ \mathcal{S}_{1}$ correspond to the new section not vanishing at $q$ in the $(0,0)$ and $(7,-1)$ twists. The twist $(11,-2)$ corresponds to $D_{j_{\epsilon}}$. We have the following injective map:
    \begin{equation*}
        \mathcal{S}_0 \otimes H^0(\PP^1, \mO_{\PP^1}(2)) \oplus \mathcal{S}_1 \xr{\eta^2\cdot \beta^1 \oplus \eta^1 \cdot \beta^2} \pi_{*}\mP(D_{j_{\epsilon}}),
    \end{equation*}
    which has rank $4$, and thus the cokernel $\mathcal{S}_2$ has rank $1$, since
    \[s_{\tau}(\tau(2)+1,2) = 5 = s_{\tau'}(\tau(2)+1,2).\]
   Moreover, since
   \[s_{\tau'}(\tau(2)+1,2) = s_{\tau'}(\tau(2)+1,3) \text{ and } s_{\tau}(\tau(2)+1,2) = s_{\tau}(\tau(2)+1,3)+1,\]
   the section corresponding to $\mathcal{S}_2$ vanishes at $q$ exactly over $Y$.
Finally, we have 
    \[s_{\tau}(\tau'(2)+1, 2) = 7 = s_{\tau'}(\tau'(2)+1,2),\]
    so $\pi_{*}\mP(D_{+}')$ has rank $7$. The map 
    \[\psi:(\mathcal{S}_0 \otimes H^0(\PP^1, \mO_{\PP^1}(3))) \oplus (\mathcal{S}_1  \otimes H^0(\PP^1, \mO_{\PP^1}(1)))\oplus \mathcal{S}_{2} \xr{\eta^1\cdot \beta^1 \oplus \eta^0 \cdot \beta^2 \oplus \eta^{2} \cdot \beta^{0}} \pi_{*}\mP(D_{j_{+}}'),\]
drops rank exactly over $Y$, since all sections in the image of $\psi$ over $Y$ vanish at $q$, while $\pi_{*}\mP(D_{j_+}')$ contains a section not vanishing at $q$.
    
\end{example}

\begin{thm}\label{thm:normal}
    Let $\tau \in \wt{\Sigma}_k$ and let $C$ be a general degree $k$, genus $g$ cover of $\PP^1$ totally ramified at two points $p$ and $q$ such that $k$ is the smallest positive integer satisfying $\mO_C(k(p-q)) \cong \mO_C$. Then, $W^{\tau}(C,p,q)$ is normal.
\end{thm}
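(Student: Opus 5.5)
We will use Serre's criterion: a Noetherian scheme is normal iff it satisfies $R_1$ and $S_2$. Condition $S_2$ is immediate, since \cref{thm:cm} shows $W^{\tau}(C,p,q)$ is Cohen--Macaulay, and CM implies $S_n$ for all $n$. So it remains to check $R_1$: the singular locus of $W^{\tau}(C,p,q)$ has codimension at least $2$ in every component.

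For $R_1$ we stratify $W^{\tau}(C,p,q)$ by the exact transmission permutation of each line bundle:
\[
W^{\tau}(C,p,q)=\bigsqcup_{\tau''\geq \tau,\ \chi_{\tau''}=\chi_\tau} W^{\tau''}(C,p,q)^{\circ}.
\]
By \cref{cor:smooth_open}, $W^{\tau}(C,p,q)$ is smooth along the open stratum $W^{\tau}(C,p,q)^{\circ}$. By \cref{thm:smooth}, it is smooth along $W^{\tau'}(C,p,q)^{\circ}$ for every $\tau'$ with $\inv_k(\tau')=\inv_k(\tau)+1$.

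Every remaining stratum has $\inv_k(\tau'')\geq \inv_k(\tau)+2$, where we use that $\tau''>\tau$ in Bruhat order forces $\inv_k(\tau'')>\inv_k(\tau)$. By \cref{thm:main}(1), applied to $\tau''$ (or just the upper bound \eqref{eq:upper_bound}), each such stratum has dimension at most $g-\inv_k(\tau)-2$. The paper also shows that $W^{\tau}(C,p,q)$ is equidimensional of dimension $g-\inv_k(\tau)$: regeneration gives every component dimension at least $g-\inv_k(\tau)$, and \eqref{eq:upper_bound} gives at most that, and CM also yields equidimensionality near each point. Hence the singular locus is contained in a finite union of closed subsets of codimension at least $2$ in every component. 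Finiteness holds because only finitely many $\tau''$ with fixed shift have $\inv_k(\tau'')\le g$, and only those can have nonempty loci. This gives $R_1$.

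The main point requiring care is that the strata with $\inv_k(\tau'')=\inv_k(\tau)+1$ are exactly the codimension-$1$ loci, and that the remaining ones really have codimension at least $2$. The inclusion of closures, $\overline{W^{\tau''}{}^{\circ}}\subseteq W^{\tau''}$, is fine. The dimension bound for those $W^{\tau''}$ comes from applying the dimension statement to $\tau''$ itself. Two cases must be handled separately: $W^{\tau''}$ may be empty, which is harmless, and if $W^{\tau}$ is zero-dimensional, normality is trivial since the locus is reduced and finite.
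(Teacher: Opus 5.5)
Your proof is correct and is the same argument as the paper's: Serre's criterion, with $S_2$ coming from Cohen--Macaulayness (\cref{thm:cm}) and $R_1$ coming from smoothness along the open stratum and along the codimension-one transmission loci (\cref{cor:smooth_open,thm:smooth}). Your additional bookkeeping spells out what the paper's one-line proof leaves implicit: that the remaining strata have codimension at least $2$, that $W^{\tau}(C,p,q)$ is equidimensional, and that only finitely many strata occur.
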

\begin{proof}
    Since $W^{\tau}(C,p,q)$ is Cohen-Macaulay (\cref{thm:cm}) and is smooth away from all transmission loci of codimension $\geq 2$ (\cref{cor:smooth_open,thm:smooth}), it is normal by Serre's criterion ($R_1 + S_2$). 
\end{proof}

\begin{thm}\label{thm:irreducible}
    Let $\tau \in \wt{\Sigma}_k$ and let $C$ be a general degree $k$, genus $g$ cover of $\PP^1$ totally ramified at two points $p$ and $q$ such that $k$ is the smallest positive integer satisfying $\mO_C(k(p-q)) \cong \mO_C$. If $g > \inv_{k}(\tau)$, then $W^{\tau}(C,p,q)$ is irreducible.
\end{thm}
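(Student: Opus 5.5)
The argument is the standard one: a connected normal variety is irreducible. By \cref{thm:normal}, $W^{\tau}(C,p,q)$ is normal, so all of its local rings are integral domains. Hence no point lies on two distinct irreducible components, and the irreducible components coincide with the connected components.

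By \cref{thm:connected}, the hypothesis $g > \inv_k(\tau)$ ensures $W^{\tau}(C,p,q)$ is connected. It is also non-empty, since the regeneration corollary shows that the special-fiber locus $W^{\tau}(X,p^g,p^0)$ is non-empty of dimension $g-\inv_k(\tau) > 0$ and lies in the closure of the generic transmission locus. A connected scheme whose connected components are its irreducible components therefore has a single irreducible component, so $W^{\tau}(C,p,q)$ is irreducible.

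In order, the steps are:
\begin{enumerate}
\item Cite normality (\cref{thm:normal}), which rests on Cohen--Macaulayness (\cref{thm:cm}) and smoothness in codimension one (\cref{cor:smooth_open}, \cref{thm:smooth}).
\item Deduce that the irreducible components of $W^{\tau}(C,p,q)$ are disjoint.
\item Invoke connectedness (\cref{thm:connected}) to conclude there is only one component.
\end{enumerate}

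The one point needing care is that normality and connectedness are both established for a \emph{general} cover. Each holds on a dense open subset of the relevant Hurwitz component, so their intersection is still a nonempty open set and both properties hold simultaneously for general $(C,p,q)$. For connectedness this openness comes from the degeneration argument: connectedness of the special fiber propagates to the generic fiber of the flat family $\overline{W^{\tau}(\mathcal{X}^*,\mathfrak{f}_p|_{\mathcal{X}^*},\mathfrak{f}_q|_{\mathcal{X}^*})}$. I expect this compatibility to be the only real subtlety; the rest is formal.
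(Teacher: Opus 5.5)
Your proposal is correct and is essentially the paper's proof, which combines connectedness (\cref{thm:connected}) and normality (\cref{thm:normal}) to conclude irreducibility. Your extra remarks are elaborations of points the paper leaves implicit: that a normal scheme has disjoint irreducible components, that the locus is non-empty, and that both properties hold simultaneously for a general cover.
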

\begin{proof}
    Since $W^{\tau}(C,p,q)$ is connected (\cref{thm:connected}) and normal (\cref{thm:normal}), it is irreducible.
\end{proof}

\section{Monodromy}\label{sec:monodromy}

 In this section, assume the characteristic of the ground field does not divide $k$. We outline the argument for the following statement.

\begin{thm}\label{thm:monodromy}
  Let $\tau \in \wt{\Sigma}_{k}$. If   $g \geq \inv_{k}(\tau)$, then the universal $\mathcal{W}^{\tau}$ has a \textit{unique} component dominating a component of $\mathcal{H}_{k,g,2}$ that consists of twice-marked curves $(C,p,q)$ such that $k$ is the smallest positive integer satisfying $\mO_C(k(p-q)) \cong \mO_C$.
\end{thm}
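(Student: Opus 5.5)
We split into the cases $g > \inv_k(\tau)$ and $g = \inv_k(\tau)$, following the structure of the monodromy argument in \cite{larson_global_2025}.

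\textbf{Case $g > \inv_k(\tau)$.} By \cref{thm:irreducible}, the fiber $W^{\tau}(C,p,q)$ over a general point $(C,p,q)$ of the relevant component $\mathcal{H}^\circ$ of $\mathcal{H}_{k,g,2}$ is irreducible of dimension $g-\inv_k(\tau)$. Consider any component of $\mathcal{W}^{\tau}$ that dominates $\mathcal{H}^\circ$. Since the general fiber of $\mathcal{W}^\tau\to\mathcal{H}^\circ$ is irreducible, the fibers of distinct dominating components would have to give distinct components of the general fiber. Hence there is at most one dominating component. Existence follows from non-emptiness, which is part (1) of \cref{thm:main}. To make this rigorous, we restrict to the open set over which the map is flat with geometrically irreducible fibers and use the standard fact that such a family has an irreducible total space. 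No monodromy input is needed in this case.

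\textbf{Case $g = \inv_k(\tau)$.} Here $\mathcal{W}^\tau \to \mathcal{H}^\circ$ is generically finite and reduced, with fiber cardinality equal to the number of reduced words for $\tau+\chi_\tau$. Uniqueness of the dominating component is then equivalent to transitivity of the monodromy action on a general fiber. We would compute this monodromy near the boundary point given by the elliptic chain $X$ of \cref{sec:degeneration}. By \eqref{eq:stratification} and regeneration, the points of a nearby fiber are in bijection with the limits $W^{T,\{1,\dots,g\}}(X,p^g,p^0)$, one for each reduced word $T$.

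Following \cite[Sec.\ 9]{larson_global_2025}, we would vary the chain in two ways: by moving the elliptic components $E^i$ and their gluing data, and by degenerating adjacent pairs $E^i\cup E^{i+1}$ further, or smoothing them into a genus-$2$ curve with the same ramification at the two outer points. Such a local deformation should realize a flip $F^i$ when $m_i - m_{i+1}\not\equiv\pm1$ and a shuffle $S^i$ when $m_i\equiv m_{i+2}\equiv m_{i+1}\pm1$; the shuffle requires treating a three-component subchain, i.e. a genus-$3$ local family. Since any two reduced words are connected by braid moves \cite[Thm.\ 3.3.1(ii)]{bjorner2006combinatorics}, these monodromy elements act transitively on the fiber, and the dominating component is unique.

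\textbf{Main obstacle.} The hard step is showing that each braid move is actually realized by monodromy. Concretely, this means checking that the genus-$2$ (resp.\ genus-$3$) local transmission locus, for the two-letter (resp.\ three-letter) subword, has irreducible monodromy over the local Hurwitz space of covers totally ramified at the two attaching points. This is where the hypothesis $\operatorname{char}\nmid k$ enters. It is needed so that the elliptic $k$-torsion $\mO_{E}(p^{i-1}-p^i)$ varies through all points of exact order $k$. Under this hypothesis, the relevant loops in the moduli of pointed elliptic curves with a $k$-torsion point (the modular curve $Y_1(k)$, which is irreducible) act on the limits exactly as in \cite[Lemmas 9.3--9.5]{larson_global_2025}. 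I would first try to transcribe those lemmas directly, replacing efficient fillings by reduced words and the splitting-type conditions by the aspect conditions $L^i\cong\mO_{E^i}((m+i)p^{i-1}+(d-m-i)p^i)$.
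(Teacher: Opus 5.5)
Your proposal takes essentially the same route as the paper. For $g>\inv_k(\tau)$ you use irreducibility of the general fiber. For $g=\inv_k(\tau)$ you identify the points near the elliptic chain $X$ with reduced words $T$, connect them by flips and shuffles, and realize each move by smoothing one node (a genus-$2$ family) or two adjacent nodes (a genus-$3$ family), importing irreducibility of the resulting loci $Y_F(i,T)$, $Y_S(i,T)$ from \cite{larson_global_2025}.

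Two corrections to your commentary. First, passing from monodromy near $X$ to the global statement needs the paper's base-change step: work over the smooth space $\overline{\mathcal H}^{\mathrm{sm\text{-}ch}}_{k,g,2}$, where $W^{\tau}$ is \'etale near $X$ because the special fiber is reduced. Second, loops inside the elliptic-chain stratum (e.g.\ via $Y_1(k)$) act trivially on the $L_T$, since each $L_T$ is pinned down by its discrete aspect data. The swap of $L_T$ and $L_{F^iT}$ instead comes from the two solutions $(x,y)$ of the difference condition on the genus-$2$ smoothing, because the map $C\times C\to\Pic^0(C)$ has degree $2$.
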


We have already shown that if $g > \inv_{k}(\tau)$, then $W^{\tau}(C,p,q)$ is irreducible for a general point $(C,p,q)$ in our component $B$ of $\mathcal{H}_{k,g,2}$. Further, if $g=\inv_{k}(\tau)$ and $k=2$, then there is exactly one reduced word for $\tau$, and so $W^{\tau}(C,p,q)$ consists of one reduced point. Thus, it suffices to see that the statement holds when $g=\inv_{k}(\tau)$ and $k > 2$. We can apply the arguments of \cite[Section 10]{larson_global_2025} without much modification. We outline their proof technique here for the sake of completeness.

\begin{enumerate}
    \item \textbf{Changing the base:}
    Let $\mathcal{C}$ denote the universal curve over our component $B$ of $\mathcal{H}_{k,g,2}$. Then $W^{\tau}(\mathcal{C}/B) \to B$ is \'{e}tale near the generic fiber $\mathfrak{f}^{*}$ of our family $\mathcal{X}$ considered in \cref{sec:degeneration} \cite[Lemma 10.1]{larson_global_2025}. So, to show there exists a unique component of $W^{\tau}(\mathcal{C}/B)$ dominating $B$, it suffices to show the same statement for $W^{\tau}(\mathcal{C}/B)\times_{B} B' \to B'$ for any other irreducible reduced base $B'$ such that the image of $B' \to B$ meets $\mathfrak{f}^*$. We can therefore pass to studying the problem over an irreducible component of $\overline{\mathcal H}^{\mathrm{sm\text{-}ch}}_{k,g,2}$, which denotes the open substack of $\overline{\mathcal H}_{k,g,2}$ parameterizing chains of smooth curves with the two marked points on opposite ends of the chain. 

    \item \textbf{Reduction to the special fiber:}
    Now, by smoothness of $\overline{\mathcal H}^{\mathrm{sm\text{-}ch}}_{k,g,2}$ \cite[Lemma 10.3]{larson_global_2025}, the degeneration $f:X\to P$ and its smoothing $f^{*}:\mathcal{X}^{*} \to \mathcal{P}^{*}$ both correspond to points in the same unique component $B'$ of $\overline{\mathcal H}^{\mathrm{sm\text{-}ch}}_{k,g,2}$. 
Let $\mathcal{C}^{\mathrm{sm\text{-}ch}}$ denote the universal curve over $B'$, whose total space is also smooth \cite[Lemma 10.3]{larson_global_2025} ; because the fiber of $W^{\tau}(\mathcal{C}^{\mathrm{sm\text{-}ch}}/B') \to B'$ over $X$ is a reduced finite set, it is \'{e}tale near $X$. Thus, showing the uniqueness of a dominating component reduces to showing that any two points of the special fiber $W^{\tau}(X)$ lie on the same irreducible component of $W^{\tau}(\mathcal{C}^{\mathrm{sm\text{-}ch}}/B')$.

    \item \textbf{Reduction to pairs indexed by braid moves:}
    The points of $W^{\tau}(X)$ are  limit line bundles $L_T$ indexed by reduced words $T$ for $\tau+d-g$. Since any two such reduced words are connected by a sequence of the elementary braid moves \(F^i\) and \(S^i\) (see \cref{sub:connected}),  it is enough to prove that whenever these moves are defined, the two corresponding points
    \[
        L_T,\; L_{F^iT}
        \qquad\text{or}\qquad
        L_T,\; L_{S^iT}
    \]
    lie in the same irreducible component of $W^{\tau}(\mathcal C^{\mathrm{sm\text{-}ch}})$. 

    \item \textbf{Restriction to smoothing families } Let $T = \sigma^{k}_{m_{i_{\ell}}}\dots \sigma^{k}_{m_{i_{1}}}$.
    To show that $L_T$ and $L_{F^iT}$ lie on the same component, we can  smooth the node \(p^i\) on $X$, obtaining families parameterized by a component $\mathcal H^\circ_{k,2,2}$ (see \cref{fig:smoothing_node}) . Inside the restriction of \(W^{\tau}(\mathcal C^{\mathrm{sm\text{-}ch}})\) to these families, we can define a substack \(Y_F(i,T)\) whose fiber over the deformation shown in \cref{fig:smoothing_node} consists of limit line bundles $L$ whose aspect $L^{E^{j}} = L_{T}^{j}$ for $j\neq i,i+1$, and on $C$ is such that
    \begin{align*}
        L^{C} &\cong  \mO_{C}((m_{i}+i)p^{i-1} + (d-m_{i}-i-1)p^{i+1}+x)\\
        &\cong  \mO_{C}((m_{i+1}+i)p^{i-1} + (d-m_{i+1}-i-1)p^{i+1}+y),
    \end{align*}
    where $x,y$ are points on $C$ satisfying $\mO_{C}(x-y)\cong \mO_{C}((m_{i+1}-m_i)(p^{i-1}-p^{i}))$; this bundle is $\tau$-positive by the same argument as for \cite[Lemma 10.4]{larson_global_2025}.  Since the map $C\times C\to \Pic^{0}(C)$ is degree 2 away from the diagonal, this construction produces two $\tau$-positive limit line bundles over the general curve of these families, and this substack is denoted $Y_{F}(i,T)$. Moreover, these two bundles limit to $L_{T}$ and $L_{F^{i}T}$ over $X$. Similarly, for shuffles \(S^iT\), we can smooth two adjacent nodes and work over families parameterized by a component $\mathcal H^\circ_{k,3,2}$. Then we can define a substack \(Y_S(i,T)\), inside the restriction of \(W^{\tau}(\mathcal C^{\mathrm{sm\text{-}ch}})\) to these families, of two $\tau$-positive limit line bundles limiting to $L_{T}$ and $L_{S^{i}T}$ over $X$ with the same construction of \cite[Lemma 10.5]{larson_global_2025}. The result then follows since $Y_F(i,T)$ and $Y_S(i,T)$ are shown to be irreducible in \cite[\S 10.4]{larson_global_2025}.

\begin{figure}
    \centering
\begin{tikzpicture}[line cap=round,line join=round]

\newcommand{\samearc}[2]{%
  \draw[name path=#2]
    (#1,0) .. controls ({#1+1},-1) and ({#1+2},-1) .. ({#1+3},0);
}


\samearc{-2.4}{arcone}
\samearc{0}{arctwo}
\samearc{6.4}{arcfour}
\samearc{8.8}{arcfive}

\draw[name path=arcthree]
  (2.1,0) .. controls (3.8,-1.45) and (5.6,-1.45) .. (7.3,0);

\path[name intersections={of=arcone and arctwo, by=PmTwo}];
\path[name intersections={of=arctwo and arcthree, by=PmOne}];
\path[name intersections={of=arcthree and arcfour, by=PpOne}];
\path[name intersections={of=arcfour and arcfive, by=PpTwo}];

\fill (PmTwo) circle (1.7pt);
\fill (PmOne) circle (1.7pt);
\fill (PpOne) circle (1.7pt);
\fill (PpTwo) circle (1.7pt);

\fill (-3.55,-0.22) circle (1.2pt);
\fill (-3.25,-0.22) circle (1.2pt);
\fill (-2.95,-0.22) circle (1.2pt);

\fill (12.55,-0.22) circle (1.2pt);
\fill (12.85,-0.22) circle (1.2pt);
\fill (13.15,-0.22) circle (1.2pt);

\node at (-1.00,-1.08) {$E^{i-2}$};
\node at (1.65,-1.08) {$E^{i-1}$};
\node at (4.70,-1.4) {$C$};
\node at (8.15,-1.08) {$E^{i+2}$};
\node at (10.55,-1.08) {$E^{i+3}$};

\node at ($(PmTwo)+(0,-0.42)$) {$p^{i-2}$};

\node[align=center] at ($(PmOne)+(0,-0.42)$) {$p^{i-1}$};

\node[align=center] at ($(PpOne)+(0,-0.42)$) {$p^{i+1}$};

\node at ($(PpTwo)+(0,-0.42)$) {$p^{i+2}$};

\end{tikzpicture}
\caption{Deformation of $X$ obtained by smoothing $p^i$; $C$ is a genus $2$ curve.}
\label{fig:smoothing_node}
\end{figure}
    
\end{enumerate}

\bibliographystyle{abbrv}

\appendix

\section{Degree $k$ covers of $\PP^1$ totally ramified at two points}
\label[app]{app:irreducibility}

In this appendix we show that the locus
\[
\Mexact
:=
\bigl\{(C,p,q) \in \mathcal{M}_{g,2} : \ord\bigl(\Oc(p-q)\bigr)=k\bigr\}
\]
is irreducible in characteristic 0 and for characteristics greater than $k$.  To show this, we work with the larger space $\mathcal{H}_{k,g,2}$, which has a stratification
\[\mathcal{H}_{k,g,2} = \bigcup_{d\mid k} \mathcal{M}_{g,2}(=d),\] and is typically not irreducible.
Nevertheless, in characteristic 0, we will show that assuming $g\geq 1$, the number of irreducible components of $\mathcal{H}_{k,g,2}$ is $\sigma_0(k)-1$, where $\sigma_{0}(k)$ is the number of positive divisors of $k$, and in particular that $\Mexact$ is irreducible in characteristic 0. The extension to characteristics greater than $k$ then follows by the standard argument of Fulton \cite[Theorem 3.3]{fulton1969hurwitz}, which is also reproduced in \cite[Prop.\ 2.2]{coskun2024stability}.

The relation $kp \sim kq$ exhibits $C$ as a degree-$k$ cover of $\mathbb P^1$ that is totally
ramified over $0$ and $\infty$.  A general such cover is simply ramified over $2g$ additional
branch points $x_1,\dots,x_{2g}$.  The corresponding monodromy data consist of two $k$-cycles
and $2g$ transpositions whose product is trivial.  Thus $\mathcal{H}_{k,g,2}$ contains a
dense open subset that is a finite cover of the (unordered) configuration space of $2g$ points in $\mathbb P^1\setminus\{0,\infty\}$, and its
irreducible components correspond to the orbits of the induced braid-group action on the
monodromy data. We now introduce the explicit action of braid moves on the monodromy data.

\subsection{The annular braid group and monodromy data}

Let $\mathcal H_{k,g,2}^{\circ} \subset \mathcal H_{k,g,2}$ denote the open substack of degree-$k$ covers $f \colon C \to \mathbb P^1$ that are totally ramified over $0$ and $\infty$ and all other branch points are simple, so there are $2g$ of them which we call $x_1,\dots,x_{2g}$. Thus there is a  map
\[
 \phi: \mathcal H_{k,g,2}^{\circ}
\longrightarrow
\operatorname{Conf}_{2g}\!\bigl(\mathbb P^1 \setminus \{0,\infty\}\bigr),
\qquad
[f] \longmapsto \{x_1,\dots,x_{2g}\}.
\]
 Since $\mathbb P^1 \setminus \{0,\infty\} \cong \mathbb C^\ast$, the target is the configuration space of $2g$ unordered points on the annulus $\mathbb C^\ast$.
Its fundamental group is the \defi{annular braid group} $CB_{2g}
:=
\pi_1\!\left(\operatorname{Conf}_{2g}(\mathbb C^\ast), *\right)$ \cite[\S 2]{bellingeri2016braid}. A presentation of $CB_{2g}$ due to Chow is  the following (see
\cite{chow1948braid} and \cite[\S 1]{gadbled2017categorical}).  It has generators
\[
\sigma_1,\dots,\sigma_{2g-1},\varepsilon
\]
and relations
\begin{align*}
\sigma_i \sigma_{i+1} \sigma_i &= \sigma_{i+1} \sigma_i \sigma_{i+1} \qquad (1\le i\le 2g-2),\\
\sigma_i \sigma_j &= \sigma_j \sigma_i \qquad (|i-j|>1),\\
\varepsilon \sigma_1 \varepsilon \sigma_1 &= \sigma_1 \varepsilon \sigma_1 \varepsilon,\\
\varepsilon \sigma_j &= \sigma_j \varepsilon \qquad (2\le j\le 2g-1).
\end{align*}
Geometrically, $\sigma_i$ exchanges the adjacent branch points $x_i$ and $x_{i+1}$, while
$\varepsilon$ winds $x_1$ once positively around the puncture $0$. This presentation is closely related to the usual extended affine type $A$ presentation, in which the generators are
\[
\sigma_1,\dots,\sigma_{2g},\rho
\]
with indices taken modulo $2g$, where $\rho$ cyclically permutes the points and thus satisfies (see \cite[\S 1]{gadbled2017categorical})
\[
\rho \sigma_i\rho^{-1}=\sigma_{i+1} \text{ and } \rho=\varepsilon \sigma_1\sigma_2\cdots \sigma_{2g-1}.
\]

The monodromy action of $CB_{2g}$ on the fiber of the map $\phi$ induces an action on the monodromy data of covers in $\mathcal{H}_{k,g,2}^{\circ}$. To describe this action, label the sheets of
a cover $f:C\to \PP^1$ in $\mathcal H_{k,g,2}^{\circ} $ by $\mathbb Z/k\mathbb Z$, so that the monodromy around $0$
is the $k$-cycle
\[
c=(0,1,\dots,k-1).
\]
Then the monodromy of the cover is recorded by a tuple of transpositions
\[
(\tau_1,\dots,\tau_{2g}),
\]
one for each simple branch point, together with the relation that the monodromy around $\infty$ is given by
\[
c_\infty=(c\tau_1\cdots\tau_{2g})^{-1}.
\]
The condition that the cover is totally ramified over $\infty$ is equivalent to
$c\tau_1\cdots\tau_{2g}$ being a $k$-cycle.   The generators $\sigma_i$ and $\varepsilon$ act on a monodromy tuple as follows: 
\begin{equation}\label{eq:flip}
    \sigma_i \cdot (c, \tau_1,\dots,\tau_r) = (c, \tau_1, \dots, \tau_{i+1}, \tau^{-1}_{i+1}\tau_i \tau^{i+1}, \dots, \tau_r),
\end{equation}
and 
\begin{equation}\label{eq:loop}
    \varepsilon \cdot (c, \tau_1,\dots,\tau_r) = ((c\tau_1)^{-1}c (c\tau_1), (c\tau_1)^{-1}\tau_1 (c\tau_1), \tau_2, \dots, \tau_r).  
\end{equation}

For $1\le i\le r$, define
\[
\varepsilon_1:=\varepsilon,
\qquad
\varepsilon_i
:=
\sigma_{i-1}\cdots \sigma_1\,\varepsilon\,\sigma_1\cdots \sigma_{i-1}.
\]
The element $\varepsilon_i \in CB_r$ loops $x_i$ once
around $0$. 
For $1\le i <r$, set
\[
\omega_i:=\varepsilon_i\varepsilon_{i-1}\cdots\varepsilon_1.
\]
Then $\omega_i$ cycles the points $x_1,\dots,x_i$
once around $0$ while preserving their cyclic order. Explicitly, 
\begin{equation}\label{eq:block_loop}
   \omega_i \cdot (c, \tau_1,\dots,\tau_r) = (u_i^{-1} c u_i, u_i^{-1} \tau_1 u_i, \dots, u_i^{-1} \tau_i u_i, \tau_{i+1}, \dots, \tau_{r}),
\end{equation}
where $u_i \coloneqq c\tau_1\dots \tau_i$.

Further, the monodromy representation is defined only up to relabeling of the $k$ sheets, that is, up to
simultaneous conjugation by an element of $S_k$.  Once we impose the normalization that the
monodromy around $0$ is exactly
\[
c=(0,1,\dots,k-1),
\]
the remaining allowed relabelings are the elements of the stabilizer of $c$ under the
conjugation action of $S_k$ on itself.  This stabilizer is its
centralizer $\langle c\rangle$.

Thus, we obtain the following equivalences of tuples $(\tau_1,\dots,\tau_{2g})$ that all lie in the same orbit under the action of $CB_r$ on the fiber of $\phi$.

\begin{enumerate}[label=\textup{(E\arabic*)},leftmargin=3.5em]
  \item \label{eq:E1}
  Simultaneous conjugation by $c$:
  \begin{equation*}
  (\tau_1,\dots,\tau_{2g})
  \sim
\bigl(c^{-1}\tau_1c,\dots,c^{-1}\tau_{2g}c\bigr).
  \label{eq:simultaneous-conjugation}
  \end{equation*}

  \item \label{eq:E2}
  The move swapping $x_i$ and $x_{i+1}$ for $1\le i<r$ as in \eqref{eq:flip}:
  \begin{equation*}
  (\tau_1,\dots,\tau_{i-1},\tau_i,\tau_{i+1},\tau_{i+2},\dots,\tau_r)
  \sim
  (\tau_1,\dots,\tau_{i-1},\tau_{i+1},\tau_{i+1}^{-1}\tau_i\tau_{i+1},\tau_{i+2},\dots,\tau_r).
  \label{eq:hurwitz-move}
  \end{equation*}
  We will say that this move \emph{moves $\tau_{i+1}$ backwards}; the inverse move \emph{moves
  $\tau_i$ forwards}.

  \item \label{eq:E3}
  For $1\le i <r$, cycling $x_1,\dots,x_i$ around $0$ as in \eqref{eq:block_loop} and then conjugating the whole tuple by $u_{i}$ gives us the equivalence:
  \begin{equation*}
  (\tau_1,\dots,\tau_i,\tau_{i+1},\dots,\tau_{2g})
  \sim
  \bigl(\tau_1,\dots,\tau_i,
  u_i\tau_{i+1}u_i^{-1},
  \dots,
u_i\tau_{2g} u_i^{-1}\bigr).
  \label{eq:cycling-move}
  \end{equation*}
\end{enumerate}

Given any $d \mid k$ and $d < k$, by Riemann's Existence theorem, there exists a degree $k$ cover $C_d$ of $\PP^1$ with the monodromy data $\tau_i = (0,d)$ for $1 \leq i \leq 2g$ along with $c,c_{\infty}$; further the degree $k$ map $C_d \to \PP^1$ factors as a degree $k/d$ map $C_d \to \PP^1$ followed by the composition $\PP^1 \to \PP^1: z \to z^{d}$, and so $C_d$ corresponds to a point of $M_{g,2}(=d)$. We first show such monodromy data are in distinct orbits under the action of $CB_{r}$, thus showing there are at least $\sigma_0(k)-1$ orbits, and therefore $\mathcal{H}_{k,g,2}$ has at least one component for each divisor $d \mid k$ for $d > 1$.

\begin{lem}
\label{lem:subgroup-invariant}
For a tuple $\underline{\tau}=(\tau_1,\dots,\tau_r)$, set
\[
G(\underline{\tau}) := \langle c,\tau_1,\dots,\tau_r\rangle \le S_k.
\]
Then:
\begin{enumerate}[label=\textup{(\alph*)},leftmargin=3em]
  \item The subgroup $G(\underline{\tau})$ is invariant under the action of $CB_r$.
  \item If $d' < d <k$ are positive divisors of $k$, then the constant tuples
  \[
  \bigl((0,d),\dots,(0,d)\bigr)
  \qquad\text{and}\qquad
  \bigl((0,d'),\dots,(0,d')\bigr)
  \]
  generate distinct subgroups of $S_k$. Therefore, the two constant tuples are in distinct orbits of the action of $CB_r$.
\end{enumerate}
\end{lem}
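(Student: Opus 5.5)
For (a), it suffices to check that each generator of $CB_r$, and each relabeling allowed in \ref{eq:E1}--\ref{eq:E3}, sends a tuple to one generating the same subgroup $G$ together with $c$. For the Hurwitz move \eqref{eq:flip}, the new entries are $\tau_{i+1}$ and $\tau_{i+1}^{-1}\tau_i\tau_{i+1}$. Both lie in $\langle \tau_i,\tau_{i+1}\rangle$. Conversely, $\tau_i=\tau_{i+1}(\tau_{i+1}^{-1}\tau_i\tau_{i+1})\tau_{i+1}^{-1}$, so the generated group is unchanged.

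For $\varepsilon$ in \eqref{eq:loop}, every new entry is a conjugate of an old one by $c\tau_1\in G$, so the new group is contained in $G$. It also contains $c=(c\tau_1)\bigl((c\tau_1)^{-1}c(c\tau_1)\bigr)(c\tau_1)^{-1}$, provided $c\tau_1$ lies in the new group. It does: $(c\tau_1)^{-1}c(c\tau_1)\cdot (c\tau_1)^{-1}\tau_1(c\tau_1)=(c\tau_1)^{-1}c\tau_1(c\tau_1)=c\tau_1$. Hence equality holds.

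The equivalences \ref{eq:E1} and \ref{eq:E3} are conjugations of some entries by elements $c$ or $u_i=c\tau_1\cdots\tau_i$, which lie in $G$. These elements also lie in the new group, since $c$ and $\tau_1,\dots,\tau_i$ are unchanged. So the subgroup is preserved in every case.

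Strictly, the conjugation by $c$ used in \ref{eq:E1} leaves $c$ fixed, so that move is also fine. If one prefers an invariant that is insensitive to relabeling, one can use the conjugacy class of $G$ instead. That suffices for (b), because the groups compared there have different orders or structures, as shown next.

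For (b), I compute $G_d:=\langle c,(0,d)\rangle$. Conjugating $(0,d)$ by powers of $c$ gives all transpositions $(a,a+d)$ with indices taken mod $k$. These generate the full symmetric group on each residue class mod $d$ within $\ZZ/k$. Adding $c$, which cyclically shifts the classes, shows that $G_d$ is the wreath product $S_{k/d}\wr \ZZ/d$. It preserves the block system $\{a+d\ZZ\}$, and it has order $(k/d)!^d\,d$.

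The key point is that $G_d$ determines $d$. I will show that the transpositions in $G_d$ are exactly the $(a,b)$ with $a\equiv b \pmod d$. If $d<k$, then $G_d$ preserves the blocks mod $d$. A transposition that preserves a block system must move points within a single block, because swapping points from two different blocks does not map blocks to blocks once the block size $k/d$ is at least $2$. Here $k/d\ge 2$ holds since $d<k$.

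Therefore $d$ is recovered as the gcd of the differences $b-a$ over all transpositions $(a,b)\in G_d$. So $G_d\ne G_{d'}$ when $d'<d$, even up to conjugacy, since $G_{d'}$ contains a transposition $(0,d')$ moving points at distance $d'$, which $G_d$ lacks. By (a), the two constant tuples therefore lie in distinct orbits.

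The only step needing care is the block-preservation argument, which requires $k/d\ge 2$; this is exactly the hypothesis $d<k$.
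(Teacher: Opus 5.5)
Your proof is correct and follows essentially the same route as the paper. In (a) you check invariance of $G$ generator by generator. In (b) you use that $G_d$ preserves the partition of $\ZZ/k\ZZ$ into residue classes mod $d$ (blocks of size $k/d\ge 2$), whereas $(0,d')\in G_{d'}$ does not. The wreath-product identification is extra but harmless.

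The only blemish is the optional ``up to conjugacy'' hedge. It is unnecessary, since you proved exact invariance: the renormalizing conjugators $c$ and $u_i$ lie in $G$. Its stated justification also fails, because the difference $b-a$ of a transposition is not preserved by conjugation in $S_k$. If you wanted that statement, use a genuine conjugacy invariant, such as the number $\tfrac{k}{2}(k/d-1)$ of transpositions in $G_d$, which is strictly decreasing in $d$.
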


\begin{proof}
For \textup{(a)}, $\sigma_i$ replaces the pair
\[
(\tau_i,\tau_{i+1})
\]
by
\[
(\tau_{i+1},\tau_{i+1}^{-1}\tau_i\tau_{i+1}),
\]
so $G(\sigma_i\cdot \underline{\tau}) = G( \underline{\tau})$. Next, applying $\varepsilon$ replaces $(c,\tau_1)$ with $((c\tau_1)^{-1} c (c \tau_1), (c\tau_1)^{-1} \tau_1 (c \tau_1))$. Since
\[ \langle c, \tau_1 \rangle = \langle (c\tau_1)^{-1} c (c \tau_1), (c\tau_1)^{-1} \tau_1 (c \tau_1) \rangle,
\]
we also conclude that $G(\varepsilon \cdot\underline{\tau}) = G( \underline{\tau})$.
Thus, $G(\underline{\tau})$ is invariant under every annular braid generator.

For \textup{(b)}, let $G_d:=\langle c,(0,d)\rangle$.
For any divisor $e\mid k$, consider the partition of $\mathbb Z/k\mathbb Z$ into residue classes
modulo $e$:
\[\ZZ/k\ZZ = \bigsqcup_{a = 0}^{e-1} \{a, a+e\dots, a+(k/e-1)\cdot e\}.\]
The cycle $c$ always preserves this partition modulo $e$ since it simply sends $a \mapsto a+1$.  The transposition $(0,d)$
preserves this partition if and only if $0$ and $d$ lie in the same residue class modulo $e$, that
is, if and only if $e\mid d$.
Therefore, we see that $G_d$ preserves the partition modulo $e$ if and only if $e \mid d$. Now if $d' < d$, then $d\nmid d'$, and so $G_d$ preserves the partition modulo $d$, while
$G_{d'}$ does not. Therefore, $G_{d} \neq G_{d'}$
\end{proof}

In the remainder of this appendix, we prove there are at most $\sigma_0(k)-1$ orbits.

\begin{prop}
\label{prop:at-most-sigma0}
Assume $g > 0$. Consider tuples of transpositions $(\tau_1,\dots,\tau_r)$ such that $c\tau_1\tau_2\cdots\tau_r$
is a $k$-cycle.  Under the equivalences \textup{\ref{eq:E1}}--\textup{\ref{eq:E3}}, there are at
most $\sigma_0(k)-1$ orbits of such tuples.  Consequently, $\mathcal{H}_{k,g,2}$ has exactly $\sigma_0(k)-1$
irreducible components.
\end{prop}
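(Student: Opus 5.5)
We will reduce every admissible tuple, using \ref{eq:E1}--\ref{eq:E3}, to one of the constant normal forms $\bigl((0,d),\dots,(0,d)\bigr)$ with $d\mid k$, $d<k$. Combined with \cref{lem:subgroup-invariant}, this gives exactly $\sigma_0(k)-1$ orbits, and hence the stated number of components. The invariant that tracks the orbit is the subgroup $G(\underline{\tau})$. Since $G(\underline\tau)$ contains the $k$-cycle $c$, it preserves the block system of residues modulo some largest $e\mid k$. We expect to land on the normal form with $d=e$.

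The plan proceeds in the following steps.

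\textbf{Step 1 (moving one transposition to the front).} Write each transposition as $\tau_i=(a_i,b_i)$. By \ref{eq:E1} we can conjugate all entries by powers of $c$, so we may arrange $\tau_1=(0,d_1)$. The quantity $\pm(b-a)$ modulo $k$ is invariant under conjugation by $c$; call it the \emph{length} of the transposition.

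\textbf{Step 2 (shortening lengths).} We use Hurwitz moves \ref{eq:E2}. Two transpositions sharing a point, $(0,a)$ and $(0,b)$, can be traded for $(0,b)$ and $(a,b)$, which realizes subtraction of lengths. Moves \ref{eq:E3} conjugate the tail by $u_i=c\tau_1\cdots\tau_i$, and this lets us rotate the tail into positions that share a point with an earlier transposition. Running a Euclidean algorithm on lengths, we aim to arrange that all transpositions have lengths that are multiples of $e:=\gcd(k,\text{all lengths})$, and that some transposition has length exactly $e$. This $e$ is the block invariant: it equals the largest divisor preserved by $G$. Since $c\tau_1\cdots\tau_r$ is a $k$-cycle, genus considerations together with $g>0$ (so $r=2g\ge2$) force $e<k$ to be a proper divisor.

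\textbf{Step 3 (making the tuple constant).} With a transposition $(0,e)$ in hand, we use \ref{eq:E3} with $u_1=c(0,e)$ to move each remaining transposition's support. Hurwitz moves around $(0,e)$ then convert every transposition into $(0,e)$ itself, with a parity/pairing argument making the count work out, using that the product must remain a $k$-cycle. This is similar to the classical Clebsch--Hurwitz argument for connectedness of simple Hurwitz spaces, adapted to the fixed cycle $c$.

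\textbf{Main obstacle.} The main obstacle is Step 2/3: controlling the available moves. Unlike the classical case, we cannot conjugate freely by $S_k$, only by $\langle c\rangle$ and the products $u_i$. We also must keep the constraint that the product is a $k$-cycle throughout. We would first try to handle $k$ prime (so $e=1$, and the tuple should reduce to all $(0,1)$) by explicit induction on $r$ using pairs $\tau_{2i-1}\tau_{2i}$. We would then extend to general $k$ by working on the quotient by blocks modulo $e$.

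Finally, the characteristic $>k$ statement follows from Fulton's argument as cited, transferring the count of components from characteristic $0$.
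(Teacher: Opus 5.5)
Your target normal form (the constant tuple of $(0,e)$'s) and your use of \cref{lem:subgroup-invariant} for the lower bound agree with the paper. However, Steps 2 and 3, which carry all the content, are stated only as aims, and as described they would not go through.

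In Step 2 you rely on \textup{\ref{eq:E3}} to ``rotate the tail'' until a transposition shares a point with an earlier one. But \textup{\ref{eq:E3}} conjugates the tail by $u_i=c\tau_1\cdots\tau_i$, which is in general not a power of $c$. Already $u_1=c(0,b)=(1,2,\dots,b)(0,b+1,\dots,k-1)$ never carries a point of $\{1,\dots,b\}$ out of that set.

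More seriously, the $k$-cycle hypothesis is not used where it matters. Your only concrete use of it is to get $e<k$, which is automatic since no transposition has length $\equiv 0 \pmod{k}$. Yet the hypothesis cannot be bypassed. The conjugacy class of $c\tau_1\cdots\tau_r$ is invariant under \textup{\ref{eq:E1}}--\textup{\ref{eq:E3}}: \textup{\ref{eq:E2}} preserves the product, and \textup{\ref{eq:E1}} and \textup{\ref{eq:E3}} replace it by a conjugate. So for $k=4$ the tuples $((0,1),(0,1))$ and $((0,1),(2,3))$ have the same $e=1$ and the same $G=S_4$, yet lie in different orbits. Indeed $c(0,1)(0,1)=c$ is a $4$-cycle, while $c(0,1)(2,3)$ is a transposition. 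Neither $e$ nor $G$ pins down the orbit by itself, and an unspecified ``parity/pairing argument'' cannot substitute for a precise use of the $k$-cycle condition.

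The missing idea is the paper's standard form.
\begin{itemize}
\item With no hypothesis on the product, \cref{lem:standardization} shows by induction on $r$ that every tuple is equivalent to a block of adjacent equal pairs followed by $(0,b_1),\dots,(0,b_m)$ with $b_1<\cdots<b_m$. The induction uses that conjugation by $c(0,b_1)$ acts on transpositions supported in $\{0,b_1+1,\dots,k-1\}$ as the shorter cycle $(0,b_1+1,\dots,k-1)$, together with the two-transposition reduction of \cref{lem:pair-reduction}.
\item The $k$-cycle hypothesis then enters exactly once: $c(0,b_1)\cdots(0,b_m)=(1,\dots,b_1)(b_1+1,\dots,b_2)\cdots(0,b_m+1,\dots,k-1)$ has $m+1$ cycles, forcing $m=0$.
\item Only once every transposition is paired do the partial products $u_{2j}$ collapse to $c$, so that \textup{\ref{eq:E3}} conjugates the remaining tail by $c$ alone. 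This is exactly what lets one rotate a pair $(a,b),(a,b)$ to $(0,b-a),(0,b-a)$ and run the Euclidean algorithm between blocks in \cref{lem:euclidean-algorithm}.
\end{itemize}
Your remark about pairs $\tau_{2i-1}\tau_{2i}$ gestures at this, but the pairing has to be established first. Your fallback of passing to the action on blocks modulo $e$ carries no information: a transposition whose length is divisible by $e$ maps each block to itself, so the induced action on blocks is just the $e$-cycle coming from $c$.
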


The proof of \cref{prop:at-most-sigma0} proceeds by reducing every tuple to a standard form and then classifying the possible
standard forms when the total product is a $k$-cycle.

\subsection{Reduction to standard form}

\begin{dfn}
\label{def:standard-tuple}
A tuple of $2n+m$ transpositions of the form
\[
(\tau_1,\tau_2,\dots,\tau_{2n},(0,b_1),(0,b_2),\dots,(0,b_m)),\; \tau_{2i-1}=\tau_{2i} \text{ for }{1 \leq i \leq n}, \text{ and } \;
 b_1<\cdots<b_m,
\]
is called \emph{standard}.
\end{dfn}

Because the first $2n$ transpositions in a standard tuple come in equal pairs, they cancel in the
partial products that appear in \textup{\ref{eq:E3}}.  More precisely, for $j\ge 2n$ one has
\[
c\tau_1\tau_2\cdots\tau_{2n}(0,b_1)\cdots(0,b_{j-2n})
=
c(0,b_1)\cdots(0,b_{j-2n}).
\]
Thus the paired transpositions play no role in the cycling move once one passes the first $2n$
entries.

The first step is the following two-transposition reduction.

\begin{lem}
\label{lem:pair-reduction}
Let $u,v,b$ be integers with $0<u\le b<v\le k$, and set $d=\gcd(b,k,v-u)$.
Then the tuple $\bigl((0,b),(u,v)\bigr)$ is equivalent under
\textup{\ref{eq:E1}}--\textup{\ref{eq:E3}} to the standard tuple
\[
\bigl((0,d),(0,d)\bigr).
\]
\end{lem}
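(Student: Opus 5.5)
The idea is to run a Euclidean algorithm on the pair $(b,\,v-u)$, using moves \ref{eq:E1}--\ref{eq:E3} with $r=2$. The quantity $d=\gcd(b,k,v-u)$ should stay invariant, and each step should replace $b$ by a strictly smaller value. First I would record the invariant, in the spirit of \cref{lem:subgroup-invariant}(a). The group $\langle c,(0,b),(u,v)\rangle$ preserves the partition of $\ZZ/k\ZZ$ into residue classes modulo $e$ (for $e\mid k$) exactly when $e\mid b$ and $e\mid v-u$. Hence $d$ is determined by the orbit, and only standard tuples with the same $d$ can be reached. This tells us the target $\bigl((0,d),(0,d)\bigr)$ is the only possible endpoint. (For the conventions I take $v\le k$ to mean $v\in\ZZ/k\ZZ$, so $v=k$ is the point $0$.)

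\textbf{Step 1: the \ref{eq:E3} move.} Here $u_1=c(0,b)$, and reading composition right to left,
\[
0\mapsto b+1,\qquad b\mapsto 1,\qquad x\mapsto x+1 \text{ otherwise}.
\]
So $u_1$ is the product of two disjoint cycles: $A=(1,2,\dots,b)$ of length $b$ and $B=(b+1,\dots,k-1,0)$ of length $k-b$. The move \ref{eq:E3} with $i=1$ sends $(u,v)$ to $(u_1(u),u_1(v))$. The hypothesis $0<u\le b<v\le k$ says exactly that $u\in A$ and $v\in B$. Iterating, we can move $(u,v)$ to $(u_1^m(u),u_1^m(v))$ for any $m$, advancing both endpoints simultaneously around their cycles. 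By CRT the reachable pairs are exactly those with $u\in A$, $v\in B$ and the relative offset fixed modulo $\gcd(b,k-b)=\gcd(b,k)$. In particular, we may normalize $u=b$, so that $(u,v)=(b,v')$ with $v'-b$ determined modulo $\gcd(b,k)$ by the original $v-u$.

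\textbf{Step 2: the Euclid step.} With $(u,v)=(b,v')$, apply \ref{eq:E2} to get
\[
\bigl((b,v'),\,(b,v')(0,b)(b,v')\bigr)=\bigl((b,v'),(0,v')\bigr).
\]
Then apply \ref{eq:E1} repeatedly (conjugation by powers of $c$ shifts every label by a constant) so that the first transposition becomes $(0,v'-b)$ or $(0,k-(v'-b))$. The tuple now again has the form $\bigl((0,b'),(u',v')\bigr)$ as in the lemma. Here $b'$ is the representative of $\pm(v'-b)$, and the new difference $v'-u'$ is $\pm b$. Using the freedom in Step 1 to choose $v'$ within its class modulo $\gcd(b,k)$, I would pick $b'$ to be the least positive residue of $v-u$ modulo $\gcd(b,k)$, or $\gcd(b,k)$ itself when $v-u\equiv 0$. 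Then $b'<b$ unless $b$ already divides both $k$ and $v-u$, i.e.\ unless $b=d$. Since $\gcd(b',k,b)=\gcd(v-u,\gcd(b,k))=d$, the invariant is preserved as the Euclid recursion requires.

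\textbf{Step 3: termination, and the main obstacle.} In the terminal case $b=d$, the offset class is $0$ modulo $\gcd(b,k)=d$. Step 1 then lets us rotate $(u,v)$ to $(d,k)=(0,d)$, which gives the standard tuple $\bigl((0,d),(0,d)\bigr)$. The step I expect to be the genuine obstacle is the bookkeeping in Step 2. I need to check that after \ref{eq:E1} the new tuple really satisfies $0<u'\le b'<v'\le k$, so that the cycle description of Step 1 applies again. I also need to choose the sign and representative so that $b'$ strictly decreases. If the naive choice fails, I would first try the alternative of conjugating by $c^{-v'}$ instead of $c^{-b}$, which exchanges the roles of $v'-b$ and $k-(v'-b)$. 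I would also treat the degenerate case $u=b$ (where $(u,v)$ and $(0,b)$ share a point) separately, checking it directly.
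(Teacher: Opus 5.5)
Your proposal is correct and is essentially the paper's argument. Both proofs run a descent on $b$: moves \ref{eq:E3} rotate $(u,v)$ along the two cycles of $c(0,b)$ until it shares an endpoint with $(0,b)$, and then a single \ref{eq:E2} move produces a pair with smaller $b$ and the same $d$. The paper normalizes $v$ to $0$ instead of $u$ to $b$, so its \ref{eq:E2} move yields $\bigl((0,w),(w,b)\bigr)$ with $w\le\gcd(b,k)$ directly in the lemma's form, without your extra \ref{eq:E1} relabeling. One small slip: after your \ref{eq:E1} shift the new difference is $k-v'$, not $\pm b$. This is harmless, since $k-v'\equiv -b$ modulo $\gcd(b',k)$ and that class is all your recursion uses.
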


\begin{proof}
We argue by induction on $b$.  Observe that
\[
c(0,b)=(1,2,\dots,b)(0,b+1,\dots,k-1).
\]
Applying the move \textup{\ref{eq:E3}} with $j=1$ conjugates $(u,v)$ by $c(0,b)$.  This clearly
preserves $d$.  Moreover, under repeated conjugation by $c(0,b)$, the element $u$ moves in a cycle
of length $b$, while $v$ moves in a cycle of length $k-b$.  Hence, after repeatedly applying
\textup{\ref{eq:E3}}, the tuple is equivalent to one of the form
\[
\bigl((0,b),(0,w)\bigr)
\]
with $\gcd(b,k,w)=d$ and $w\le \gcd(b,k-b)=\gcd(b,k)$.

If $w=b$, then necessarily $d=b$, and there is nothing to prove.  Otherwise $w<b$.  Now move
$(0,w)$ backwards using \textup{\ref{eq:E2}}; this transforms the pair into
\[
\bigl((0,w),(w,b)\bigr).
\]
Since
\[
\gcd(w,k,b-w)=\gcd(b,k,w)=d,
\]
the induction hypothesis applies and yields the claim.
\end{proof}

\begin{lem}
\label{lem:standardization}
Every tuple of $r$ transpositions is equivalent under \textup{\ref{eq:E1}}--\textup{\ref{eq:E3}}
to a standard tuple.
\end{lem}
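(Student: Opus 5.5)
We argue by induction on the length $r$ of the tuple; the length zero case is vacuous. Write the tuple as $(\tau_1,\dots,\tau_r)$. The plan is to repeatedly extract a pair of equal adjacent transpositions at the front, or else a block of transpositions of the form $(0,b)$ with increasing $b$, using \ref{eq:E1}--\ref{eq:E3} together with \cref{lem:pair-reduction}.

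\textbf{Normalizing transpositions through $0$.} First we use \ref{eq:E1} and \ref{eq:E3}. Simultaneous conjugation by $c$ rotates all labels by $1$, so we may assume $\tau_1=(0,b)$ for some $b$. More generally, we want every transposition to contain $0$. Suppose the initial segment $\tau_1,\dots,\tau_i$ is already of the form $(0,b_1),\dots,(0,b_i)$. Then \ref{eq:E3} conjugates the remaining entries by $u_i=c(0,b_1)\cdots(0,b_i)$. Now $u_i$ is a product of disjoint cycles, one of which contains $0$. Iterating this conjugation moves the endpoints of $\tau_{i+1}$ around their $u_i$-cycles. If one endpoint of $\tau_{i+1}$ lies in the cycle of $0$, we can make it equal to $0$, and then $\tau_{i+1}=(0,b_{i+1})$. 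Otherwise, both endpoints of $\tau_{i+1}$ lie in cycles not containing $0$. In that case we move $\tau_{i+1}$ backwards with \ref{eq:E2} past the appropriate $(0,b_j)$, or first apply \cref{lem:pair-reduction} to the pair $((0,b_i),\tau_{i+1})$. This replaces the pair by a pair of equal transpositions $((0,d),(0,d))$.

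\textbf{Sorting and peeling pairs.} Next we handle the block of transpositions through $0$. Moving $(0,b_{j+1})$ backwards past $(0,b_j)$ with \ref{eq:E2} produces $(0,b_{j+1}),(b_{j+1},b_j)$. The second entry no longer contains $0$, and we re-apply the normalization or \cref{lem:pair-reduction} to it; this lets us sort the $b$'s. Whenever two adjacent entries are equal, we move the pair to the front using \ref{eq:E2}. This is possible because conjugating by $\tau\tau=1$ does nothing, so moving a transposition past an equal pair leaves everything unchanged. Such a pair also cancels in every later partial product, as noted after \cref{def:standard-tuple}. Having moved the pair to the front, we apply the induction hypothesis to the tail of length $r-2$. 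The cycling moves \ref{eq:E3} for the tail are realized by \ref{eq:E3} on the full tuple, since the pair contributes trivially to each $u_i$. Once the front pairs are split off and the remaining entries are $(0,b_j)$ with distinct increasing $b_j$, the tuple is standard. If two of the $b_j$ coincide, they become an adjacent equal pair, which we peel off as above.

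\textbf{Main obstacle.} The hard step is showing that the process terminates. We need a monovariant that strictly decreases each time \ref{eq:E2} produces a transposition $(b_{j+1},b_j)$ avoiding $0$. I would try the lexicographic pair (number of entries not containing $0$, sum of the $b_j$). The first coordinate should never increase, because each such entry is immediately absorbed by \cref{lem:pair-reduction} into an equal pair. The second coordinate should decrease because the induction inside \cref{lem:pair-reduction} decreases $b$. Checking this carefully in the case where $\tau_{i+1}$ has both endpoints outside the $u_i$-cycle of $0$ is where I expect the real work to be.
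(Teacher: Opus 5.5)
You have located the difficulty correctly, but the proposal does not resolve it, and the tool you plan to use does not apply where you need it. \cref{lem:pair-reduction} is a statement about the two-term tuple $((0,b),(u,v))$. Its proof iterates \textup{\ref{eq:E3}} with $j=1$, i.e.\ conjugation by $c(0,b)$. Its hypothesis $0<u\le b<v\le k$ says exactly that $(u,v)$ has one endpoint in each of the two cycles $(1,\dots,b)$ and $(0,b+1,\dots,k-1)$ of $c(0,b)$.

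This creates two problems. First, inside a longer tuple with prefix $(0,b_1),\dots,(0,b_{i-1})$, the move \textup{\ref{eq:E3}} at the position of $(0,b_i)$ conjugates by $c(0,b_1)\cdots(0,b_i)$, not by $c(0,b_i)$. So you cannot apply the lemma to the pair $((0,b_i),\tau_{i+1})$ unless everything in front of it consists of cancelling equal pairs. Second, you invoke the lemma precisely when both endpoints of $\tau_{i+1}$ avoid the cycle of $0$, which is exactly where its hypothesis fails; for $i=1$ this case means $\tau_2=(x,y)$ with $1\le x<y\le b_1$. Consequently the claim that every entry avoiding $0$ is ``immediately absorbed into an equal pair'' is unjustified, and your lexicographic monovariant has no support. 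The sorting step keeps regenerating transpositions $(b_{j+1},b_j)$ that avoid $0$ and renormalizing them to new, possibly unsorted, values, and nothing shows it terminates. Your normalization when an endpoint lies in the cycle of $0$, and your peeling of equal pairs to the front, are fine.

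The missing idea is structural, and it removes the need for a monovariant. Induct on $r$ with the first $r-1$ entries already standard, $(\text{pairs},(0,b_1),\dots,(0,b_m),(u,v))$, and look at the $m+1$ cyclic arcs $[1,b_1],[b_1+1,b_2],\dots,[b_m+1,0]$ of $\ZZ/k\ZZ$.
\begin{enumerate}
\item If some arc contains neither $u$ nor $v$, rotate it to the front. Move $(0,b_m)$ backwards through the block with \textup{\ref{eq:E2}}, then apply \textup{\ref{eq:E3}} with $j=2n$ a total of $b_m$ times; this cyclically shifts the arcs. Once neither $u$ nor $v$ lies in $[1,b_1]$, the entries $(0,b_2),\dots,(0,b_m),(u,v)$ are all supported on the cycle $\{0,b_1+1,\dots,k-1\}$ of $c(0,b_1)$. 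There, conjugation by $c(0,b_1)$ acts as the shorter cycle $c'=(0,b_1+1,\dots,k-1)$. The induction hypothesis, applied to this shorter tuple with $c'$ in place of $c$, standardizes it. Moving $(0,b_1)$ forwards past the resulting equal pairs then gives a standard tuple.
\item If every arc contains $u$ or $v$, then $m=1$, the prefix consists only of cancelling pairs, and $(u,v)$ straddles $b_1$. This is exactly the setting in which \cref{lem:pair-reduction} is valid.
\end{enumerate}
Termination then follows from the induction on $r$ itself.
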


\begin{proof}
We argue by induction on $r$.  By the induction hypothesis, we may assume that the tuple has the
form
\[
(\tau_1,\tau_2,\dots,\tau_{2n},(0,b_1),(0,b_2),\dots,(0,b_m),(u,v)),
\qquad \tau_{2i-1}=\tau_{2i},
\]
for some $0\le u<v\le k-1$.

If $m=0$, then applying \textup{\ref{eq:E3}} with $j=2n$ simply conjugates $(u,v)$ by $c$.
Repeating this as necessary, we may assume that the final transposition is of the form $(0,b)$, and
hence the tuple is already standard.  We therefore assume from now on that $m>0$.

\smallskip
\noindent\textbf{Case 1:} Neither $u$ nor $v$ lies in the interval $[1,b_1]$.

In this case,
\[
c\tau_1\tau_2\cdots\tau_{2n}(0,b_1)=c(0,b_1)=(1,2,\dots,b_1)(0,b_1+1,\dots,k-1).
\]
Conjugation by $c(0,b_1)$ acts on any permutation supported on
$\{0,b_1+1,\dots,k-1\}$ in the same way as conjugation by the $(k-b_1)$-cycle
\[
c'=(0,b_1+1,\dots,k-1).
\]
Each of the transpositions
\[
(0,b_2),\dots,(0,b_m),(u,v)
\]
is supported on this set.  By induction on $r$, using the moves
\textup{\ref{eq:E1}}--\textup{\ref{eq:E3}} with $c$ replaced by $c'$, we may standardize the tail.
Thus our tuple is equivalent to one of the form
\[
(\tau_1,\dots,\tau_{2n},(0,b_1),\tau_1',\tau_2',\dots,\tau_{2n'}',(0,b_1'),\dots,(0,b_{m'}')),
\]
where $\tau_{2i-1}'=\tau_{2i}'$ and $b_1<b_1'<\cdots<b_{m'}'$.  Finally, repeatedly move $(0,b_1)$
forwards using \textup{\ref{eq:E2}}.  Because the transpositions $\tau_{2i-1}'$ and $\tau_{2i}'$
come in equal pairs, this preserves the standard form of the tail, and the result is standard.

\smallskip
\noindent\textbf{Case 2:} There is an interval $[b_i+1,b_{i+1}]$ or $[b_m+1,0]$ containing neither
$u$ nor $v$.

Repeatedly move $(0,b_m)$ backwards using \textup{\ref{eq:E2}}.  This yields the equivalent tuple
\[
(\tau_1,\tau_2,\dots,\tau_{2n},(b_m,0),(b_m,b_1),\dots,(b_m,b_{m-1}),(u,v)).
\]
Now apply \textup{\ref{eq:E3}} with $j=2n$ a total of $b_m$ times.  The result is
\[
(\tau_1,\tau_2,\dots,\tau_{2n},(0,-b_m),(0,b_1-b_m),\dots,(0,b_{m-1}-b_m),(u-b_m,v-b_m)).
\]
Thus the effect is to rotate the marked points $b_1,\dots,b_m,u,v$ around the cyclic order on
$\mathbb Z/k\mathbb Z$.  Repeating this rotation eventually reduces the situation to Case~1,
because one can arrange that neither $u$ nor $v$ lies in the interval $[1,b_1]$.

\smallskip
\noindent\textbf{Case 3:} Every interval contains either $u$ or $v$.

This forces $m=1$.  Since $u<v$, we have $u\in[1,b_1]$ and $v\in[b_1+1,k]$, so
Lemma~\ref{lem:pair-reduction} applies and completes the argument.
\end{proof}

\subsection{Classification of standard tuples}

We now show that, under the additional hypothesis that the total product is a $k$-cycle, the
standard form is determined by a divisor of $k$.

\begin{lem}
\label{lem:euclidean-algorithm}
A tuple of transpositions
\[
\bigl((a_1,b_1),(a_1,b_1),(a_2,b_2),(a_2,b_2),\dots,(a_n,b_n),(a_n,b_n)\bigr)
\]
is equivalent under \textup{\ref{eq:E1}}--\textup{\ref{eq:E3}} to the tuple consisting entirely of
transpositions $(0,d)$, where
\[
d=\gcd(k,b_1-a_1,b_2-a_2,\dots,b_n-a_n).
\]
\end{lem}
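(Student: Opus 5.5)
We argue by induction on $n$, reducing the paired tuple one pair at a time via a Euclidean-algorithm-style descent, in the spirit of \cref{lem:pair-reduction}. The key observation is that a repeated pair $(\tau,\tau)$ contributes trivially to every partial product $u_j$ once both entries have been passed. Hence \ref{eq:E3} applied at $j=2i$ (that is, right after the $i$-th pair) conjugates all later transpositions by $u_{2i}$. This equals $c$ whenever all earlier entries come in equal pairs. In other words, after any block of pairs we may conjugate the tail by $c$ alone, i.e.\ rotate the tail around $\mathbb Z/k\mathbb Z$, without touching the head. Using \ref{eq:E1} as well, we may also rotate the head independently.

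\textbf{Step 1: normalize each pair.} We first put every pair in the form $((0,e_i),(0,e_i))$ with $e_i \mid k$ and $e_i=\gcd(k,b_i-a_i)$. For a single pair $((a,b),(a,b))$, rotate by $c$ (via \ref{eq:E1} for the first pair, via \ref{eq:E3} at $j=2i-2$ for later pairs) so that $a=0$. We then need to shrink $b$ to $\gcd(k,b)$. For this we use \ref{eq:E3} at $j=2i-1$, i.e.\ in the middle of the pair: the partial product there is $c(0,b)$, which has the cycle structure described in the proof of \cref{lem:pair-reduction}. Conjugating the second copy by $c(0,b)$ repeatedly turns the pair into $((0,b),(0,w))$ with $w\le\gcd(b,k-b)$. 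Then \cref{lem:pair-reduction} (or directly its inductive argument) converts this pair into $((0,\gcd(b,k)),(0,\gcd(b,k)))$. Since $\gcd(b,k,b-0)=\gcd(b,k)$, the lemma yields exactly this. All of these moves act only on the pair in question and on the tail, by conjugation with $c$-rotations, so they do not disturb earlier pairs.

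\textbf{Step 2: merge two adjacent pairs.} Given $((0,e),(0,e),(0,f),(0,f))$ with $e,f\mid k$, we show it is equivalent to $((0,h),(0,h),(0,h),(0,h))$ with $h=\gcd(e,f)$. This step is the main obstacle. The plan is to use \ref{eq:E2} to move the first $(0,f)$ backwards past the second $(0,e)$, producing $(0,e),(0,f),(e,f)$-type entries (conjugation of $(0,e)$ by $(0,f)$ gives $(f,e)$). We then apply \ref{eq:E3} in the middle, where the partial product is $c(0,e)$, to rotate the third and fourth entries. The aim is to land in the configuration $((0,e),(u,v))$ covered by \cref{lem:pair-reduction}, where $v-u$ ranges over multiples related to $f$. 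That would produce a block $((0,d),(0,d))$ with $d=\gcd(e,k,f)=h$ followed by a remaining pair. Re-normalizing the remaining pair by Step 1, and iterating, replaces $(e,f)$ by $(h,f')$ with strictly smaller data, in the manner of the Euclidean algorithm. The delicate point is tracking that the leftover pair again has $\gcd$ divisible by $h$, so that it normalizes to $(0,h)$. We expect to verify this with an explicit small computation, choosing the rotation so that $u=0$ is avoided.

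\textbf{Step 3: conclude.} By Steps 1 and 2, an induction on $n$ (merging the first pair into the rest repeatedly) shows that all pairs become $(0,d)$ with $d=\gcd(k,e_1,\dots,e_n)=\gcd(k,b_1-a_1,\dots,b_n-a_n)$. Throughout, the invariant of \cref{lem:subgroup-invariant}(a) serves as a sanity check: $\langle c,(a_i,b_i)\rangle$ preserves exactly the partitions modulo divisors of $d$, consistent with the target $\langle c,(0,d)\rangle$.
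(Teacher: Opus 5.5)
\paragraph{Where the proposal breaks.} Your Step~2 is the entire content of the lemma, and as you acknowledge, it is not carried out. The plan as described also does not close.

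After your \textup{\ref{eq:E2}} move the tuple is $(0,e),(0,f),(f,e),(0,f)$. The moves of Lemma~\ref{lem:pair-reduction} applied to the first two entries, and any rotation of the tail, only conjugate entries three and four \emph{simultaneously}. These moves are \textup{\ref{eq:E2}} at positions $1,2$ and \textup{\ref{eq:E3}} at $j\le 2$. So the product of entries three and four stays a conjugate of the $3$-cycle $(f,e)(0,f)$. What is left is therefore not a repeated pair, and your Step~1 cannot be applied to it. For instance, with $k=6$ and $(e,f)=(3,2)$, following the proof of Lemma~\ref{lem:pair-reduction} on the first two entries ends at $(0,1),(0,1),(1,5),(1,4)$.

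Suppose you first standardize such a tail into some $((0,e'),(0,e'))$, where $h\mid e'$ is forced by the invariant of Lemma~\ref{lem:subgroup-invariant}. You then have to merge $(0,h)^2$ with $(0,e')^2$, which is the original problem again. You never exhibit a quantity that strictly decreases, so ``iterating'' is not yet an argument.

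\paragraph{The missing idea.} The paper never breaks the paired structure. Pure \textup{\ref{eq:E2}} moves give
\begin{align*}
(0,a),(0,a),(0,r),(0,r) &\sim (0,a),(a,r),(0,a),(0,r) \sim (0,a),(a,r),(a,r),(0,a)\\
&\sim (0,a),(a,r),(0,r),(a,r) \sim (0,a),(0,a),(a,r),(a,r).
\end{align*}
After a block of repeated pairs the partial product is $c$. So the inverse of \textup{\ref{eq:E3}} at the end of the $(0,a)$-block conjugates the tail by $c^{-1}$. Doing this $a$ times turns $(a,r),(a,r)$ into $(0,r-a),(0,r-a)$.

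Combine this with the swap $(0,a),(0,r),(0,r)\sim(0,r),(a,r),(0,r)\sim(0,r),(0,r),(0,a)$. This runs the subtractive Euclidean algorithm on $(a,r)$ and merges $(0,a)^2(0,r)^2$ into four copies of $(0,\gcd(a,r))$. Your Step~1 is then needed only for a single pair, which is the $n=1$ case: Lemma~\ref{lem:pair-reduction} with $u=b$, $v=k$.

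\paragraph{A smaller point in Step~1.} There, \textup{\ref{eq:E3}} at $j=2i-1$ conjugates the whole tail by $c(0,b)$, which is not a power of $c$. Later pairs stay repeated pairs, but their differences $b_\ell-a_\ell$ can change. So the identity $e_\ell=\gcd(k,b_\ell-a_\ell)$ for the original data is not justified. To identify the final value as $\gcd(k,b_1-a_1,\dots,b_n-a_n)$, use Lemma~\ref{lem:subgroup-invariant}(a) together with the partition-mod-$e$ criterion from its part~(b). This should be an actual proof step, not just a sanity check.
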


\begin{proof}
We argue by induction on $n$.

If $n=1$, then repeated application of \textup{\ref{eq:E1}} conjugates $(a_1,b_1)$ by $c$ a total
of $a_1$ times, so the pair is equivalent to
\[
\bigl((0,b_1-a_1),(0,b_1-a_1)\bigr).
\]
Applying Lemma~\ref{lem:pair-reduction} with $b=b_1-a_1$, $u=b_1-a_1$, and $v=k$ shows that this is
further equivalent to the constant pair with value $\gcd(k,b_1-a_1)$.

Assume now that $n\ge 2$.  By induction, we may suppose that
\[
\tau_1=\tau_2=\cdots=\tau_{2n-2}=(0,a),
\qquad
a=\gcd(k,b_1-a_1,b_2-a_2,\dots,b_{n-1}-a_{n-1}).
\]
Under repeated application of \textup{\ref{eq:E3}} with $j=2n-2$, the tuple is equivalent to
\[
\bigl((0,a),(0,a),\dots,(0,a),(0,r),(0,r)\bigr),
\qquad r=b_n-a_n.
\]
It therefore suffices to show that one may run the Euclidean algorithm on the pair $(a,r)$.

There are two elementary operations to justify.

\smallskip
\noindent\emph{Swapping $a$ and $r$.}
Repeatedly applying \textup{\ref{eq:E2}} moves the block $(0,r),(0,r)$ across a copy of $(0,a)$:
\[
(0,a),(0,r),(0,r)
\sim
(0,r),(a,r),(0,r)
\sim
(0,r),(0,r),(0,a).
\]
Thus one may interchange the roles of $a$ and $r$.

\smallskip
\noindent\emph{Replacing $r$ by $r-a$.}
By applying the inverse of the equivalence \textup{\ref{eq:E2}} repeatedly, we may replace a block of transpositions $(0,r)$ by a block
of equal size consisting of transpositions $(a,r)$:
\begin{align*}
(0,a),(0,a),(0,r),(0,r)
&\sim (0,a),(a,r),(0,a),(0,r)\\
&\sim (0,a),(a,r),(a,r),(0,a)\\
&\sim (0,a),(a,r),(0,r),(a,r)\\
&\sim (0,a),(0,a),(a,r),(a,r).
\end{align*}
Now apply the inverse of \textup{\ref{eq:E3}}: since
\[
(c(0,a)\cdots(0,a))^{-1}=c^{-1},
\]
conjugating $(a,r)$ by this product a total of $a$ times yields
$(0,r-a)$.  Hence we may replace $r$ by $r-a$.

These two operations realize the Euclidean algorithm, so after finitely many steps we arrive at the
constant tuple whose entries are all $(0,\gcd(a,r))$.  Since
\[
\gcd(a,r)=\gcd(k,b_1-a_1,b_2-a_2,\dots,b_n-a_n),
\]
this proves the claim.
\end{proof}

\enlargethispage{2\baselineskip}
We can now finish the proof of the proposition.

\begin{proof}[Proof of Proposition~\ref{prop:at-most-sigma0}]
By Lemma~\ref{lem:standardization}, every tuple of transpositions is equivalent to a standard tuple
\[
(\tau_1,\tau_2,\dots,\tau_{2n},(0,b_1),\dots,(0,b_m)).
\]
Suppose in addition that
\[
c\tau_1\tau_2\cdots\tau_{2n}(0,b_1)\cdots(0,b_m)
\]
is a $k$-cycle.  Since the first $2n$ transpositions occur in equal pairs, their product is the
identity, so this simplifies to
\[
c(0,b_1)\cdots(0,b_m) =(1\,2\,\dots\,b_1)\,
(b_1+1\,\dots\,b_2)\cdots
(b_{m-1}+1\,\dots\,b_m)\,
(0\,\,b_m+1\,\dots\,k-1).
\]
Thus, if $m>0$, this permutation is a product of at least two disjoint cycles, and hence cannot be a $k$-cycle. Therefore necessarily $m=0$. So, every tuple is equivalent to one of the form
\[
\bigl((a_1,b_1),(a_1,b_1),\dots,(a_n,b_n),(a_n,b_n)\bigr),
\]
and by Lemma~\ref{lem:euclidean-algorithm} this is equivalent to the constant tuple whose entries are
all $(0,d)$, where
\[
d=\gcd(k,b_1-a_1,\dots,b_n-a_n).
\]
Therefore each orbit is represented by one of the constant tuples
\[
\bigl((0,d),(0,d),\dots,(0,d),(0,d)\bigr)
\]
for some $d\mid k, d< k$. Thus, there are at most $\sigma_0(k)-1$ such possibilities, thus completing the proof.
\end{proof}
\end{document}